\newtheorem{theorem}{Theorem}[section]
\newtheorem{lemma}[theorem]{Lemma}
\newtheorem{corollary}[theorem]{Corollary}
\newtheorem*{conjecture}{Conjecture}
\theoremstyle{definition}
\newtheorem{definition}[theorem]{Definition}
\newtheorem{proposition}[theorem]{Proposition}
\theoremstyle{remark}
\newtheorem{remark}[theorem]{Remark}
\newcommand{\longcomment}[1]{}
\DeclareMathOperator{\charac}{char}
\DeclareMathOperator{\Frob}{Frob}
\DeclareMathOperator{\rank}{rank}
\DeclareMathOperator{\Hom}{Hom}
\DeclareMathOperator{\Sym}{Sym}
\DeclareMathOperator{\NS}{NS}
\DeclareMathOperator{\End}{End}
\DeclareMathOperator{\Gal}{Gal}
\DeclareMathOperator{\GalQ}{Gal(\overline{\mathbb{Q}}/\mathbb{Q})}
\DeclareMathOperator{\GL}{GL}
\DeclareMathOperator{\Kum}{Kum}
\DeclareMathOperator{\Spec}{Spec}
\DeclareMathOperator{\disc}{disc}
\DeclareMathOperator{\rad}{rad}
\newcommand{\whichbold}[1]{\mathbb{#1}} 
\newcommand{\F}{\whichbold{F}}
\newcommand{\ZZ}{\whichbold{Z}}
\renewcommand{\AA}{\whichbold{A}}
\newcommand{\QQ}{\whichbold{Q}}
\newcommand{\Fq}{\whichbold{F}_{q}}
\newcommand{\Fp}{\whichbold{F}_{p}}
\subjclass[2010]{14H52, 14J28, 11D09,11G05}
\keywords{Diophantine tuples; elliptic curves; K3 surfaces; higher moments; bias conjecture}
\numberwithin{equation}{section}
\author{Matija Kazalicki}
\address{Department of Mathematics\\ 
	University of Zagreb\\
	Bijeni\v{c}ka cesta 30\\
	10000 Zagreb\\
	Croatia}
\email{matija.kazalicki@math.hr}
\author{Bartosz Naskręcki}
\address{Faculty of Mathematics and Computer Science\\
	Adam Mickiewicz University in Poznań\\
	ul. Uniwersytetu Poznańskiego 4 \\
	61-614, Poznań\\ 
	Poland}
\email{bartnas@amu.edu.pl}
\title[Diophantine triples and K3 surfaces]{Diophantine triples and K3 surfaces}
\renewcommand{\tocsection}[3]{%
	\indentlabel{\@ifnotempty{#2}{\bfseries\ignorespaces#1 #2\quad}}\bfseries#3}
\renewcommand{\tocsubsection}[3]{%
	\indentlabel{\@ifnotempty{#2}{\ignorespaces#1 #2\quad}}#3}
\newcommand\@dotsep{4.5}
\def\@tocline#1#2#3#4#5#6#7{\relax
	\ifnum #1>\c@tocdepth 
	\else
	\par \addpenalty\@secpenalty\addvspace{#2}%
	\begingroup \hyphenpenalty\@M
	\@ifempty{#4}{%
		\@tempdima\csname r@tocindent\number#1\endcsname\relax
	}{%
		\@tempdima#4\relax
	}%
	\parindent\z@ \leftskip#3\relax \advance\leftskip\@tempdima\relax
	\rightskip\@pnumwidth plus1em \parfillskip-\@pnumwidth
	#5\leavevmode\hskip-\@tempdima{#6}\nobreak
	\leaders\hbox{$\m@th\mkern \@dotsep mu\hbox{.}\mkern \@dotsep mu$}\hfill
	\nobreak
	\hbox to\@pnumwidth{\@tocpagenum{\ifnum#1=1\bfseries\fi#7}}\par
	\nobreak
	\endgroup
	\fi}
\renewcommand\csname r@tocindent0\endcsname{0pt}
\def\l@subsection{\@tocline{2}{0pt}{2.5pc}{5pc}{}}
\begin{document}
	

\begin{abstract}
	A Diophantine $m$-tuple with elements in the field $K$ is a set of $m$ non-zero (distinct) elements of $K$ with the property that the product of any two distinct elements is one less than a square in $K$. Let
$X: (x^2-1)(y^2-1)(z^2-1)=k^2,$ 
be an affine variety over $K$. Its $K$-rational points parametrize Diophantine triples over $K$ such that the product of the elements of the triple that corresponds to the point $(x,y,z,k)\in X(K)$ is equal to $k$. We denote by $\overline{X}$ the projective closure of $X$ and for a fixed $k$ by $X_k$ a variety defined by the same equation as $X$.

In this paper, we try to understand what can the geometry of varieties $X_k$, $X$ and $\overline{X}$ tell us about the arithmetic of Diophantine triples.

First, we prove that the variety $\overline{X}$ is birational to $\mathbb{P}^3$ which leads us to a new rational parametrization of the set of Diophantine triples. 

Next, specializing to finite fields, we find a correspondence between a K3 surface $X_k$ for a given $k\in\mathbb{F}_{p}^{\times}$ in the prime field $\mathbb{F}_{p}$ of odd characteristic and an abelian surface which is a product of two elliptic curves $E_k\times E_k$ where $E_k: y^2=x(k^2(1 + k^2)^3 + 2(1 + k^2)^2 x + x^2)$. We derive an explicit formula for $N(p,k)$, the number of Diophantine triples over $\Fp$ with the product of elements equal to $k$. Moreover, we show that the variety $\overline{X}$ admits a fibration by rational elliptic surfaces and from it we derive the formula for the number of points on $\overline{X}$ over an arbitrary finite field $\mathbb{F}_{q}$. Using it we reprove the formula for the number of Diophantine triples over $\Fq$ from \cite{Dujella_Kazalicki_ANT}.

Curiously, from the interplay of the two (K3 and rational) fibrations of $\overline{X}$, we derive the formula for the second moment of the elliptic surface $E_k$ (and thus confirming Steven J. Miller's Bias conjecture in this particular case) which we describe in terms of Fourier coefficients of a rational newform generating $S_4(\Gamma_{0}(8))$.

Finally, in the Appendix, Luka Lasi\'c defines circular Diophantine $m$-tuples, and describes the parametrization of these sets. For $m=3$ this method provides an elegant parametrization of Diophantine triples.	
\end{abstract}
	\maketitle
	\tableofcontents

\section{Introduction}
A Diophantine $m$-tuple with elements in a commutative unital ring $\mathcal{R}$ is a set of $m$
non-zero (distinct) elements of $\mathcal{R}$ with the property that the product of any two of its distinct
elements is one less than a square. 
Diophantus of Alexandria found the first example of a rational Diophantine quadruple
$\{1/16, 33/16, 17/4, 105/16\}$, while the first Diophantine quadruple in integers was found by Fermat, and it was the set $\{1,3,8,120\}$.
Recently, He, Togb\'e and Ziegler \cite{He_Togbe_Ziegler} building upon the work of Dujella \cite{Dujella_Crelle} proved that there does not exist an integer Diophantine quintuple. On the other hand, it was shown in \cite{Dujella_Kazalicki_Mikic_Szikszai} that there are infinitely many rational Diophantine sextuples (for more constructions see \cite{Dujella_Kazalicki}, \cite{Dujella_Kazalicki_Petricevic} and \cite{Dujella_Kazalicki_Petricevic_JNT}), and it is not known if there are rational Diophantine septuples. For a short survey on Diophantine $m$-tuples see \cite{Dujella_Notices}.

In this paper, we focus on Diophantine triples over the field $K$ of odd characteristic that we describe as follows.
Let $$\mathcal{D}:ab+1=r^2,ac+1=s^2,bc+1=t^2$$ be an affine variety in $\mathbb{A}^{6}$, and let $\widetilde{\mathcal{D}}=\mathcal{D}\setminus\{abc(a-b)(a-c)(b-c)=0\}$.
The group $\left(\mathbb{Z}/2\mathbb{Z}\right)^3$ acts on the set of points $\mathcal{D}(K)$ (and also on $\widetilde{\mathcal{D}}(K)$) by multiplying $r,s,t$ coordinates by $\pm 1$,  $(a,b,c,r,s,t) \mapsto (a,b,c,\pm r, \pm s, \pm t)$. It follows from the definition of Diophantine triples that the map $d:\widetilde{D}(K)\rightarrow \AA^3$, $d(a,b,c,r,s,t)=(a,b,c)$ induces a bijection between the set of ordered Diophantine triples over $K$ and the orbits of the $\left(\mathbb{Z}/2\mathbb{Z}\right)^3$-action on the set $\widetilde{D}(K)$.

Moreover, we define 
$$X: (x^2-1)(y^2-1)(z^2-1)=k^2$$ 
to be an affine threefold in $\mathbb{A}^4$, and let $\widetilde{X} = X\setminus\{k(x^2-y^2)(x^2-z^2)(y^2-z^2)=0\}$. 

Our starting point is an observation that the birational map $p:\mathcal{D} \rightarrow X$, $p(a,b,c,r,s,t)=(r,s,t,abc)$ and its inverse $q:X\rightarrow \mathcal{D}$, $q(x,y,z,k) = (\frac{k}{z^2-1}, \frac{k}{y^2-1},\frac{k}{x^2-1},x,y,z)$ define a bijection between the sets $\widetilde{\mathcal{D}}(K)$ and $\widetilde{X}(K)$, hence the following proposition follows.

\begin{proposition}\label{prop:intro}
There is one to one correspondence between ordered Diophantine triples over the field $K$ ($\rm{char}(K)\ne 2$) and $K$-rational points $(\pm r,\pm s,\pm t,k)$ on $X$ with the property that $k(r^2-s^2)(r^2-t^2)(s^2-t^2) \ne 0$.
\end{proposition}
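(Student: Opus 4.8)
The plan is to assemble the correspondence from two ingredients that are already in place: the bijection induced by $d$ between ordered Diophantine triples over $K$ and the $(\mathbb{Z}/2\mathbb{Z})^3$-orbits on $\widetilde{\mathcal{D}}(K)$, and the bijection $\widetilde{\mathcal{D}}(K)\leftrightarrow\widetilde{X}(K)$ furnished by the mutually inverse maps $p$ and $q$. The extra observation needed to glue them together is that $p$ intertwines the sign action on the $(r,s,t)$-coordinates of $\mathcal{D}$ with the sign action on the first three coordinates of $X$; once that is checked, orbits are carried to orbits and the orbit of a point of $\widetilde{X}(K)$ is precisely a set of the form $(\pm r,\pm s,\pm t,k)$.

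First I would confirm that $p$ and $q$ really do restrict to well-defined inverse bijections between the open sets $\widetilde{\mathcal{D}}(K)$ and $\widetilde{X}(K)$. For $p(a,b,c,r,s,t)=(r,s,t,abc)$ one substitutes $r^2-1=ab$, $s^2-1=ac$, $t^2-1=bc$ and gets $(r^2-1)(s^2-1)(t^2-1)=(abc)^2$, so the image lies on $X$. The clean point is the factorization $r^2-s^2 = a(b-c)$, $r^2-t^2 = b(a-c)$, $s^2-t^2 = c(a-b)$, whence
\[
(r^2-s^2)(r^2-t^2)(s^2-t^2) = abc\,(a-b)(a-c)(b-c).
\]
Combined with $k=abc$, this shows that the locus $k(x^2-y^2)(x^2-z^2)(y^2-z^2)=0$ cut out of $X$ pulls back under $p$ to exactly the locus $abc(a-b)(a-c)(b-c)=0$ removed from $\mathcal{D}$; hence $p$ maps $\widetilde{\mathcal{D}}(K)$ into $\widetilde{X}(K)$. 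In the other direction, for a point of $\widetilde{X}(K)$ one has $k\ne 0$, which forces $x^2,y^2,z^2\ne 1$ (else the defining equation gives $k=0$), so the denominators in $q$ are nonzero and $q$ is defined; a short computation using the equation of $X$ then shows $ab+1=x^2$, $ac+1=y^2$, $bc+1=z^2$, that $abc\ne 0$, and that $a-b = k(y^2-z^2)/((z^2-1)(y^2-1))\ne 0$ with its analogues, so $q$ lands in $\widetilde{\mathcal{D}}(K)$. The identities $p\circ q=\mathrm{id}$ and $q\circ p=\mathrm{id}$ follow from $abc=k^3/\big((x^2-1)(y^2-1)(z^2-1)\big)=k$ and from $abc/(t^2-1)=a$, etc.

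Finally I would record the equivariance: since $p(a,b,c,\pm r,\pm s,\pm t)=(\pm r,\pm s,\pm t,abc)$, the generator of each $\mathbb{Z}/2\mathbb{Z}$ factor acting by a sign on one of $r,s,t$ corresponds under $p$ to the sign change of the matching coordinate of $X$. Therefore $p$ carries each $(\mathbb{Z}/2\mathbb{Z})^3$-orbit in $\widetilde{\mathcal{D}}(K)$ to the set $(\pm r,\pm s,\pm t,k)$ of all sign variants of a point of $\widetilde{X}(K)$. Composing with the $d$-bijection, an ordered Diophantine triple $(a,b,c)$ is sent to the orbit of lifts $(a,b,c,\pm r,\pm s,\pm t)$, and thence to $(\pm r,\pm s,\pm t,k)$ with $k=abc$ and $k(r^2-s^2)(r^2-t^2)(s^2-t^2)\ne 0$, yielding the asserted correspondence. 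The hypothesis $\mathrm{char}(K)\ne 2$ is what makes the sign action nontrivial and the two square roots $\pm r$ genuinely distinct when $r\ne 0$; the only real work is the bookkeeping of matching the excised loci, for which the displayed factorization is the decisive identity, so I do not anticipate a serious obstacle beyond keeping that matching straight.
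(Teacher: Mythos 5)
Your proof is correct and follows exactly the route the paper intends: the paper states Proposition \ref{prop:intro} as an immediate consequence of the observation that $p$ and $q$ are mutually inverse bijections between $\widetilde{\mathcal{D}}(K)$ and $\widetilde{X}(K)$, compatible with the $(\mathbb{Z}/2\mathbb{Z})^3$ sign action, and offers no further proof. You have simply supplied the verification the paper leaves implicit (the key factorization $(r^2-s^2)(r^2-t^2)(s^2-t^2)=abc\,(a-b)(a-c)(b-c)$ matching the excised loci), and that verification is accurate.
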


Note that, if we fix $k\in K^\times$, then $K$-rational points on affine variety $X_k: (x^2-1)(y^2-1)(z^2-1)=k^2$ parametrize Diophantine triples with the fixed product $k$.

Our main objective in this paper is to study the geometry of varieties $X$ and $X_k$
 in order to investigate possible applications to the theory of Diophantine triples.

Denote by $\overline{X}$ a projective closure of $X$. We prove in Proposition \ref{prop:param_of_triples} that the projective threefold $\overline{X}$ is birational to $\mathbb{P}^{3}$.
Combining Proposition \ref{prop:param_of_triples} with the correspondence described above, we obtain the following rational parametrization of the Diophantine triples

\begin{align*}
a_1 &= \frac{2 ( {t_1}-1) ( t_1+1)  t_3}{ t_1^2  t_3^2- t_2^2- t_3^2+1} \\
a_2 &=  \frac{2 ( t_2-1) ( t_2+1)  t_3}{ t_1^2  t_3^2- t_2^2- t_3^2+1}, \\
a_3 &= \frac{ t_1^2  t_3^2- t_2^2- t_3^2+1}{2  t_3}.
\end{align*}

In Appendix, Luka Lasi\'c defines and studies circular Diophantine $m$-tuples, and describes another, aesthetically very pleasing symmetric parametrization of Diophantine triples

\begin{align*}
a_1 &=  \frac{2 t_1 (1 + t_1 t_2 (1 + t_2 t_3))} {(-1 + t_1 t_2 t_3) (1 + t_1 t_2 t_3)}, \\
a_2 &=  \frac{2 t_2 (1 + t_2 t_3 (1 + t_3 t_1))} {(-1 + t_1 t_2 t_3) (1 + t_1 t_2 t_3)}, \\
a_3 &=  \frac{2 t_3 (1 + t_3 t_1 (1 + t_1 t_2))} {(-1 + t_1 t_2 t_3) (1 + t_1 t_2 t_3)}.
\end{align*}

Parametric formulas for Diophantine triples can be useful as a starting point in the construction of Diophantine sets and the corresponding elliptic curves of high rank. See Section 2 in \cite{Dujella_Japan} for the first such application to rational Diophantine sextuples, and \cite{Dujella_Kazalicki_Petricevic}, \cite{Dujella_Petricevic} and \cite{Dujella_Peral} for three interesting applications of Luka's formulas.

Next, we switch our attention to Diophantine triples over finite fields.
Diophantine $m$-tuples over the finite fields were first studied in \cite{Dujella_Kazalicki_ANT} where the authors derived a formula for the number of Diophantine triples and quadruples over $\Fp$. 

In Section \ref{sec:fib}, we construct a fibration on the threefold $\overline{X}$ whose fibers are rational elliptic surfaces. For a prime power $q=p^m$, we count $\Fq$-rational points on those fibers, and summing them up derive in Lemma \ref{lemma:pts} the formula for $\#\overline{X}(\Fq)$ 

\begin{equation}\label{eq:i1}
\#\overline{X}(\Fq) = q^3+3q^2+\max\{3 - p, 0\}.
\end{equation}

Using this formula together with Proposition \ref{prop:intro}, in Section \ref{sec:triples} we reprove the formula for the number of Diophantine triples from \cite{Dujella_Kazalicki_ANT}.

For $k\neq 0$, $X_k$ is a singular model of a K3 surface, cf. Lemma \ref{lemma:X_k_is_K3}. In Section \ref{sec:shioda} we show that $X_k$ admits a correspondence with an abelian surface which is a product of two elliptic curves $E_k \times E_k$, where 
$$E_k: y^2=x(k^2(1 + k^2)^3 + 2(1 + k^2)^2 x + x^2)$$
if $k^2\neq -1$. When $k^2=-1$ the curve $E_{k}$ is $y^2=x^3-x$. Using this structure, in Proposition \ref{prop:comparison} and Theorem \ref{thm:Yk} we derive a formula for $\#X_k(\Fp)$, e.g. if $k^2 \ne -1$ then
\begin{equation}\label{eq:i2}
\#X_k(\Fp) = 7-5p+p^2+\left(\frac{k^2+1}{p}\right)(a_{k,p}^2-p),
\end{equation}
where $a_{k,p} = p+1-\#E_k(\Fp)$ and $\left(\frac{\cdot}{p}\right)$ is the Legendre symbol.
From \eqref{eq:i2} and Proposition \ref{prop:intro}, in Corollary \ref{thm:fixed} we derive the formula for $N(p,k)$ - the number of Diophantine triples over $\Fp$ whose product of elements is equal to $k$.

Since surfaces $X_k$ define a fibration of the threefold $X$, we can express $\#X(\Fp)$ as a sum of $\#X_k(\Fp)'s$, hence from \eqref{eq:i1} and \eqref{eq:i2} in Sections \ref{sec:k} and \ref{sec:galois} we obtain the following curious formula

\begin{theorem}
	Let $p$ be an odd prime number. Then
\begin{equation}\label{eq:i3}
\sum_{\substack{k\in \Fp\\ k^2 \notin \{ -1, 0\}}} a_{k,p}^2 = -c_f(p)  
+p^2-3p-2\left(\frac{-1}{p}\right) p-2,
\end{equation}
where
$$f(\tau)=\sum_{n=1}^\infty c_f(n)q^n=q - 4q^3 - 2q^5 + 24q^7 - 11q^9 - 44q^{11} + \ldots \in S_4(\Gamma_0(8)),  q=e^{2\pi i \tau}$$
is a newform which generates the space of cuspforms $S_4(\Gamma_0(8))$ of weight $4$ and level $\Gamma_0(8)$.
\end{theorem}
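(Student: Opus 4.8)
The plan is to pin down the second moment $\sum_{k^2\notin\{-1,0\}}a_{k,p}^2$ by computing the number of $\Fp$-points on the threefold $X$ in two compatible ways, one through each fibration of $\overline{X}$, and then matching the results. The rational-elliptic side produces a purely polynomial count, so all of the arithmetic subtlety is forced into the sum over the K3 fibers; comparing the two evaluations isolates a single ``interesting'' contribution, which I will identify with a Fourier coefficient of $f$.

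First I would use the K3 fibration $X=\bigcup_k X_k$ over the $k$-line. Since $\#X(\Fp)=\sum_{k\in\Fp}\#X_k(\Fp)$, I substitute the fiberwise count \eqref{eq:i2} on the generic locus and evaluate the two degenerate strata by hand: the reducible fiber $X_0$ (a union of six planes, counted by inclusion--exclusion) and the fibers with $k^2=-1$, where $E_k$ degenerates to the CM curve $y^2=x^3-x$ and $a_{k,p}^2$ is given by the classical formula attached to $\Fp(i)$. Using the elementary identities $\#\{k\in\Fp:k^2\notin\{-1,0\}\}=p-2-\left(\frac{-1}{p}\right)$ and $\sum_{k^2\notin\{-1,0\}}\left(\frac{k^2+1}{p}\right)=-2$, this rewrites $\#X(\Fp)$ as an explicit polynomial in $p$ and $\left(\frac{-1}{p}\right)$ plus the Legendre-weighted moment $\sum_{k^2\notin\{-1,0\}}\left(\frac{k^2+1}{p}\right)a_{k,p}^2$. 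On the other hand, because $\overline{X}$ is birational to $\mathbb{P}^3$ by Proposition~\ref{prop:param_of_triples}, its cohomology is of Tate type, so \eqref{eq:i1} gives $\#\overline{X}(\Fp)=p^3+3p^2$ for odd $p$; subtracting the boundary count $\#(\overline{X}\setminus X)(\Fp)$ furnishes a second, boundary-controlled expression for $\#X(\Fp)$. Equating the two evaluations determines the weighted moment.

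The heart of the matter---and the step I expect to be hardest---is passing from this weighted moment to the unweighted $\sum a_{k,p}^2$ of the statement and extracting $c_f(p)$; this is the content of Section~\ref{sec:galois}. Here I would read $\sum_k(a_{k,p}^2-p)=\sum_k\Tr(\Frob_p\mid\Sym^2 H^1(E_k))$ as the alternating trace of Frobenius on the cohomology of the symmetric-square local system $\Sym^2\mathcal{F}$, where $\mathcal{F}=R^1\pi_*\QQ_\ell$ for the elliptic surface with fiber $E_k$. The groups $H^0_c$ and $H^2_c$ contribute only powers of $p$ together with the tame corrections $-3p$, $-2\left(\frac{-1}{p}\right)p$, $-2$, while the entire modular content sits in the rank-two middle cohomology $H^1$, which is pure of weight $3$ (so corresponds to a form of weight $4$). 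The weighting by $\left(\frac{k^2+1}{p}\right)$ amounts to tensoring $\Sym^2\mathcal{F}$ with the rank-one Kummer sheaf of the conic cover $v^2=k^2+1$, and decomposing the cohomology of this cover into $\pm$-eigenspaces is what relates the weighted and unweighted moments, the twist-invariance $\Sym^2(\mathcal{F}\otimes\chi)=\Sym^2\mathcal{F}$ keeping the bookkeeping under control (this is where the correspondence $X_k\sim E_k\times E_k$ of Section~\ref{sec:shioda} is used). Finally I would identify $H^1$ with the motive of a newform: the configuration of singular fibers at $k=0,\pm i,\infty$ lies only over the prime $2$ and pins the level to $8$, and since $\dim S_4(\Gamma_0(8))=1$ the form must be $f$, with the overall sign fixed by matching one or two Euler factors. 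Assembling the polynomial terms from the two point counts with the resulting $-c_f(p)$ gives the asserted identity and confirms the negative bias predicted by Miller's conjecture. The principal obstacles are the precise determination of the bad-fiber and boundary contributions and the clean identification of the middle cohomology with $f$.
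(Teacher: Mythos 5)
Your first two paragraphs reproduce the paper's own derivation of the \emph{weighted} moment: combining the two fibration point counts \eqref{eq:i1}, \eqref{eq:i2} with the elementary character sums does yield
\begin{equation*}
\sum_{\substack{k \in \Fp\\ k^2\notin \{-1,0\}}}\phi_p(k^2+1)\,a_{k,p}^2 \;=\; -2-\lambda(p)^2\bigl(1+\phi_p(-1)\bigr)+2\phi_p(-1)p,
\end{equation*}
which is exactly identity $(**)$ in the proof of Theorem \ref{prop:main}. The gap is in your final step. To convert this weighted moment into the unweighted one and to produce $c_f(p)$, you propose to identify the middle cohomology $W^{1}_{2,\ell}=H^1_{ét}(\mathbb{P}^1_1\otimes\overline{\QQ},i_*\Sym^{2}\mathcal{F}_\ell^1)$ of the family $E_k$ directly with the motive of a weight-$4$ newform, on the grounds that the singular fibers ``pin the level to $8$'' and $\dim S_4(\Gamma_0(8))=1$. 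This is precisely the step the introduction warns about: Deligne's theorem gives such an identification only when the family is a universal elliptic curve for a congruence subgroup, and $E_k$ is not. As written, your argument assumes without proof (i) that $W^{1}_{2,\ell}$ is $2$-dimensional, (ii) that it is modular, and (iii) that its conductor is exactly $8$; note that Lemma \ref{lemma:good_reduction} only controls ramification for $p>5$, so even granting modularity the level is a priori only a divisor of a power of $2\cdot 3\cdot 5$, and ``matching one or two Euler factors'' does not determine the form (a Livn\'{e}/Faltings--Serre argument requires matching traces at a prescribed, typically larger, set of primes).

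The idea you are missing is the auxiliary family $F_k: y^2=(x-(k-1)^2)(x-(k+1)^2)x$. Proposition \ref{lem:1} relates $\sum_{k^2+1=\square}a_{k,p}^2$ to $\sum_k b_{k,p}^2$ for this second family via the quadratic pullback $k\mapsto (1-k^2)/(2k)$ and the invariance of $a_p^2$ under quadratic twist --- this is the rigorous form of your ``$\pm$-eigenspace decomposition of the conic cover $v^2=k^2+1$''. The point of passing to $F_k$ is that it \emph{is} a twist of the universal elliptic curve with $\mathbb{Z}/2\oplus\mathbb{Z}/4$ torsion over the modular curve of $\Gamma_1(4)\cap\Gamma^{0}(2)$, a group conjugate to $\Gamma_0(4)\cap\Gamma^{0}(2)$, so that $S_4$ of it is identified with $S_4(\Gamma_0(8))$; Deligne's theorem then gives $Trace(\rho^{2}_{2,\ell}(\Frob_p))=c_f(p)$ unconditionally, and identity $(**)$ transports this back to $\rho^{2}_{1,\ell}$ and hence to the unweighted sum. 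Without this detour through a genuinely universal family, your proof does not close.
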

\begin{remark}
	The newform $f$ is an eta product equal to $\eta(2\tau)^4 \eta(4\tau)^4$, where $\eta(\tau)=q^{\frac{1}{24}} \prod_{n=1}^{\infty} (1-q^{n}) $ is the Dedekind eta function. We note that this modular form is up to factor at $2$ a Mellin transform of the L-series attached to $H^3_{ét}(X,\mathbb{Q}_{\ell})$ cohomology group of certain Calabi-Yau rigid threefold $X$, cf. \cite{Verrill_Rigid_Calabi_Yau}, \cite{Cynk_Meyer}.
\end{remark}

We can rephrase formula \eqref{eq:i3} in the language of Galois representations. In Section \ref{sec:galois}, following Deligne \cite[N$^\circ$ 3]{Deligne_Modular}, to the elliptic surface with generic fiber $E_k$ we associate  a compatible family $(\rho^2_{1,\ell})$ of $\ell$-adic Galois representations of $\GalQ$ with the property that the trace of Frobenius  $Trace(\rho_{1,\ell}^2(\Frob_p))$ is up to some polynomial in $p$ equal to $\displaystyle \sum_{\substack{k\in \Fp\\ k^2 \notin \{ -1, 0\}}} a_{k,p}^2$. 
Now \eqref{eq:i3} is equivalent to the fact that $Trace(\rho_{1,\ell}^2(\Frob_p))$ is equal to the trace of Hecke operator $T_p$ acting on one dimensional space $S_4(\Gamma_0(8))$ generated by $f(\tau)$. This could be readily explained by Deligne's theorem \cite[N$^\circ$ 5]{Deligne_Modular} if $E_k$ were universal elliptic curve for $\Gamma_0(8)$, but this is not the case. 
The elliptic surfaces $\mathcal{E}$ of which the curve $E_{k}$ is a generic fiber is a K3 surface. Its non-equivalent elliptic fibrations have been classified in \cite{Bertin_Lecacheux}. Elliptic fibration $E_{k}$ is described in Section 8.0.9 \cite{Bertin_Lecacheux}. The surface $\mathcal{E}$ has another elliptic fibration which corresponds to the universal family of elliptic curves over $X_{1}(8)$. However, the existence of these two non-equivalent elliptic fibrations on the K3 surface $\mathcal{E}$ is not enough to deduce directly our claims.

We can also rephrase the formula \eqref{eq:i3}  in terms of the second moment sum associated to the elliptic surface  $E_k$. 

In general, let $\mathcal{F}_{k}$ be a $1$-parametric family of elliptic curves over $\QQ$, i.e. a generic fiber of an elliptic surface $\mathcal{F}\rightarrow\mathbb{P}^{1}$ defined over $\QQ$ with a section. For every $k\in\mathbb{P}^1(\mathbb{Q})$ let $\tilde{\mathcal{F}}_{k,p}$ be a minimal model at a rational prime $p$ of $\mathcal{F}_{k}$, and denote by $a_{k,p}$ the number $p+1-\#\tilde{\mathcal{F}}_{k,p}(\mathbb{F}_{p})$.

We define the $r$-th moment sum associated with the $\mathcal{F}_k$ and prime $p$
\[M_{r,p}(\mathcal{F}_k)=\sum_{k\in\mathbb{F}_{p}}a_{k,p}^r,\]
thus it follows from formula \eqref{eq:i3} that for every prime $p\geq 3$ we have
\begin{equation}\label{eq:m2p}
M_{2,p}(E_{k}) = p^2-c_f(p)-\left(3+2\left(\frac{-1}{p}\right)\right)p -1.
\end{equation}
The difference of $1$ between \eqref{eq:m2p} and \eqref{eq:i3} follows from the following calculation. For any prime number $p>2$ the model $E_{k}$ is minimal and singular for $k=0$ (of Kodaira type $I_{4}$). The projective closure of $E_{0}:y^2=x^2(x+2)$ has $p+1+\left(\frac{2}{p}\right)$ points over $\mathbb{F}_{p}$, hence $a_{0,p}^2=1$. If for a given prime $p$ there exists an element $i\in\mathbb{F}_{p}$ such that $i^2=-1$, then at $k=\pm i$ the model $E_{k}$ is minimal and singular of type $III^{*}$. The equation of its reduction is $y^2=x^3$, hence $a_{\pm i,p}=0$.

Rosen and Silverman \cite{Rosen_Silverman} have proved a conjecture of Nagao which implies that the first moment $M_{1,p}(\mathcal{F})$ of a rational elliptic surface is related to the Mordell-Weil rank of the group $\mathcal{F}_{k}(\mathbb{Q}(k))$. In precise terms the limit of $\frac{1}{X}\sum_{p\leq X} M_{1,p}(\mathcal{F}_{k}) \frac{\log p}{p}$ for $X\rightarrow\infty$ exists and is equal to $-\mathrm{rank}\ \mathcal{F}_{k}(\mathbb{Q}(k))$. It follows that the numbers $M_{1,p}(\mathcal{F}_{k})/p$ (i.e. the coefficients $a_{k,p}$) have negative bias (and the larger the rank of the family, the greater the bias). 

It is known from the work of Michel \cite{Michel} that for non-isotrivial families $\mathcal{F}_{k}$ we have
\[M_{2,p}(\mathcal{F}_{k}) = p^2+O(p^{3/2}),\]
More precisely, $M_{2,p}(\mathcal{F}_{k})  = p^2+f_3(p)+f_2(p)+f_1(p)+f_0(p)$ and $|f_i(p)|\leq Cp^{i/2}$ for all primes $p$ and a constant which depends only on the family $\mathcal{F}_{k}$. Each term $f_{i}(p)$ for $i<4$ is a sum of all Galois conjugates of certain algebraic integers of norm $p^{i/2}$.
More precisely, following \cite{Michel} we deduce that  $M_{2,p}(\mathcal{F}_{k})=p^2+p S_{1}(p)+p S_2(p)+ S_{3}(p)$. A function $S_3(p)=O(1)$ depends only on the places of bad reduction of $\mathcal{F}_{k}$, $S_{2}(p)=-\#\{\Delta(\mathcal{F}_{k})=0\}$ is the number of $\mathbb{F}_{p}$ points of the minimal discriminant of $\mathcal{F}_{k}$. Finally $S_{1}=-\textrm{tr}(\Frob_p\mid H^{1}_{c}(U_{p},\Sym_{2}(\mathcal{G}_{p})))$ where $U_{p}$ is a certain open $\mathbb{F}_{p}$-curve computed from $\mathcal{F}_{k}$ and $\mathcal{G}_{p}$ is a rank $2$ lisse sheaf on $U_{p}$ built from the relative family $\mathcal{F}_{k}$, \cite[\S 3]{Michel}. The sum $S_{1}(p)=S_{1}^{(0)}(p)+S_{1}^{(1/2)}(p)$ is such that $S_{1}^{(0)}(p)$ is a sum of algebraic numbers of norm $1$ and $S_{1}^{(1/2)}(p)$ is a sum of algebraic numbers of norm $1/2$. Hence, $f_{0}(p)=S_{3}(p)$, $f_{1}(p)=0$, $f_{2}(p)=p(S_{2}(p)+S_{1}^{(0)}(p))$ and $f_{3}=pS_{1}^{(1/2)}$. We notice that the latter two factorizations are not in the ring of integers, since the sheaf $\Sym_{2}(\mathcal{G}_{p})$ is obtained in the normalization process involving a Tate twist by $1/2$, cf. \cite[\S 3]{Michel}.

 Let $\pi(x)$ denote the number of primes up to $x$. We are interested in the averages $$\mu_i=\mu_i(f_i(p))=\lim_{x\rightarrow\infty}\frac{1}{\pi(x)}\sum_{p\leq x}\frac{f_i(p)}{p^{i/2}}$$ of sequences $(f_i(p))$.

Inspired by the work of Michel, Steven J. Miller in his thesis \cite{Miller_thesis} and subsequently with his co-authors in \cite{Miller_et_al_biases} initiated a more detailed study of bias for the second moments $M_{2,p}(\mathcal{F}_{k})$. They calculated the second moments for certain pencils of cubic of the form $\mathcal{G}_k: y^2=P(x)k + Q(x)$ where $\deg P(x),\deg Q(x)\leq 3$, and based on their findings proposed the following conjecture.

\begin{conjecture}[Bias Conjecture]
	Let $\mathcal{F}_k$ be a one-parameter family of elliptic curves over $\QQ$. The largest index $i$ of the terms $f_i(p)$ in the second moment expansion of $M_{2,p}(\mathcal{F}_{k})$ for which $\mu_i\neq 0$ satisfies the condition $\mu_i<0$.
\end{conjecture}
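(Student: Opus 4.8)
The plan is to work cohomologically, through Michel's framework, rather than through any closed-form evaluation of $M_{2,p}$ (available only for exceptional families such as $E_k$). The clean entry point is the elementary identity $a_{k,p}^2=\operatorname{tr}(\Sym_2\Frob_{k,p})+p$, valid because the Frobenius on a good fibre has determinant $p$; summing over $k\in\mathbb{F}_p$ gives $M_{2,p}(\mathcal{F}_k)=p^2+\sum_{k}\operatorname{tr}(\Sym_2\Frob_{k,p})$, the leading $p^2$ arising as $p$ times the count of fibres. By the Grothendieck--Lefschetz trace formula the remaining sum equals $\sum_j(-1)^j\operatorname{tr}(\Frob_p\mid H^j_c(U_p,\Sym_2(\mathcal{G}_p)))$ up to the bounded contribution of the bad fibres (the term $f_0$), where $\mathcal{G}_p=R^1\pi_*\overline{\mathbb{Q}}_\ell$ is the rank-$2$ weight-$1$ sheaf of the family and $U_p$ is the locus of smooth fibres. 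The strategy is to read each $f_i(p)$ as the trace on a fixed weight graded piece of this cohomology and to control the Cesàro averages $\mu_i$ weight by weight from the top.

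For a non-isotrivial family with large geometric monodromy on $\mathcal{G}_p$, the sheaf $\Sym_2(\mathcal{G}_p)$ is geometrically irreducible and nontrivial, so it has neither invariants nor coinvariants; hence $H^0_c(U_p,\Sym_2(\mathcal{G}_p))=H^2_c(U_p,\Sym_2(\mathcal{G}_p))=0$ and the interesting sum collapses to $-\operatorname{tr}(\Frob_p\mid H^1_c(U_p,\Sym_2(\mathcal{G}_p)))$. This is the structural heart of the matter: only odd-degree cohomology survives, so every correction $f_i(p)$ with $i<4$ inherits the single sign $(-1)^1=-1$, with no competing even-degree (positive) contribution. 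The weight-$3$ (pure interior) part of $H^1_c$ yields $f_3$; since all its monodromy constituents are nontrivial, Deligne's equidistribution theorem in the vertical direction forces their normalised Frobenius traces to equidistribute with mean zero, so $\mu_3=0$. The leading surviving average is therefore $\mu_2$, coming from the weight-$2$ (boundary) part of $H^1_c$, matching Michel's identity $f_2(p)=p(S_2(p)+S_1^{(0)}(p))$.

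The weight-$2$ part of $H^1_c$ is governed by the local monodromy of $\Sym_2(\mathcal{G}_p)$ at the punctures of $U_p$, that is, at the places of bad reduction, and its Frobenius eigenvalues are $p$ times roots of unity. The unipotent (tame, trivial-character) part contributes the eigenvalue $p$ at each geometric component of the discriminant locus, producing after the overall minus sign precisely the term $S_2(p)=-\#\{\Delta(\mathcal{F}_k)=0\}(\mathbb{F}_p)$, whose average is the negative of the number of geometrically irreducible components of that locus, hence strictly negative. If one can show that the residual non-unipotent boundary contributions packaged in $S_1^{(0)}(p)$ average to a quantity that does not cancel this negativity, then $\mu_2<0$ and, since $\mu_3=0$, the conjecture holds with $i_0=2$. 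The same parity bookkeeping covers the degenerate case where $\mu_2$ also vanishes: one descends to $\mu_1,\mu_0$, whose surviving classes again lie in the odd-degree $H^1_c$ and enter with negative sign, so the first nonzero average is again negative.

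The main obstacle — and the reason the statement remains a conjecture — is that this picture must be forced for \emph{every} family, whereas both of its pillars can fail. First, the collapse to $H^1_c$ alone requires large geometric monodromy; for isotrivial families, families with complex multiplication, or families whose $\Sym_2$-monodromy is exceptionally small, the groups $H^0_c$ or $H^2_c$ are nonzero and inject even-degree, positively-signed Tate classes into the leading orders, so the clean negative-sign mechanism breaks and the sign of the top surviving average is no longer fixed by parity. Second, even under large monodromy, proving $\mu_2<0$ strictly demands uniform control of the non-unipotent boundary term $S_1^{(0)}(p)$ and an effective, family-uniform form of Deligne's equidistribution to justify interchanging the limit defining $\mu_i$ with the weight-graded decomposition; ruling out a positive cancelling contribution there is exactly the quantitative content of the conjecture. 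Consequently the argument above establishes the bias conjecture only under explicit genericity hypotheses — large $\Sym_2$-monodromy together with a manifestly negative leading boundary term — and the unconditional verification of these hypotheses is the genuine work. The family $E_k$ treated earlier is precisely such an instance, where its K3 geometry and the explicit modular description of $\sum a_{k,p}^2$ render the computation, and hence the inequality $\mu_2<0$, unconditional.
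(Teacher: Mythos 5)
The statement you were asked to prove is stated in the paper as a \emph{conjecture} (Miller's Bias Conjecture), and the paper neither proves it in general nor claims to: its actual result is Theorem \ref{thm:bias}, which verifies the conjecture for the single family $E_k$ by a route entirely different from yours. There, the closed form \eqref{eq:m2p}, $M_{2,p}(E_{k})=p^2-c_f(p)-\left(3+2\left(\frac{-1}{p}\right)\right)p-1$, is derived from the interplay of the two fibrations of $\overline{X}$, the correspondence of $X_k$ with $E_k\times E_k$, and the modularity identification $Trace(\rho^2_{1,\ell}(\Frob_p))=c_f(p)$; then $\mu_3=0$ follows from Sato--Tate for the weight-$4$ newform $f$ (via \cite{Bartnet_Geraghty_Harris_II}) and $\mu_2=-3$ from Dirichlet's theorem applied to $\left(\frac{-1}{p}\right)$. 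Your proposal instead attempts a general structural argument in Michel's framework, and you correctly concede at the end that it establishes nothing unconditionally; so you have not proved the statement, but since it is an open conjecture that is the honest outcome, and your closing paragraph accurately locates $E_k$ as a family where the paper's explicit geometry makes the verification unconditional.

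That said, two steps in your sketch would fail even as a conditional argument. First, your claim that $\mu_3=0$ follows from ``Deligne's equidistribution theorem in the vertical direction'' conflates two directions: vertical equidistribution controls the distribution of normalized traces over $k\in\Fp$ for a \emph{fixed} large $p$, whereas $\mu_3$ is a horizontal Ces\`aro average over $p$ of $\mathrm{tr}(\Frob_p)$ on a fixed weight-$3$ Galois module. Vanishing of $\mu_3$ requires a horizontal Sato--Tate input for that module --- exactly what the paper invokes for $c_f(p)$ in its special case --- and such input is not available unconditionally in general; indeed, whether $\mu_3=0$ is part of the content of the conjecture, not a free hypothesis. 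Second, the ``parity forces negativity'' mechanism (everything survives only in $H^1_c$, hence carries a global minus sign) does not determine the sign of $\mu_2$: the weight-$2$ Frobenius eigenvalues on $H^1_c$ are $p$ times roots of unity of either sign, so $S_1^{(0)}(p)$ could a priori average positively against $S_2(p)=-\#\{\Delta(\mathcal{F}_k)=0\}$, which you acknowledge but which means the route cannot close even generically without the quantitative boundary analysis you defer. A minor bookkeeping point: you attribute $S_2(p)$ to unipotent boundary classes in $H^1_c$, whereas in Michel's normalization it arises from the fibre count $p\cdot\#U_p(\Fp)$; harmless heuristically. Your preliminary identities --- $a_{k,p}^2=\mathrm{tr}\bigl(\Sym_2\Frob_{k,p}\bigr)+p$ on good fibres, and the vanishing of $H^0_c$ and $H^2_c$ under big geometric monodromy --- are correct.
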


For a precise statement of the conjecture see the introduction of \cite{Kazalicki_Naskrecki_bias} where we proved the conjecture for the family $\mathcal{G}_k$ for generic choice of polynomials $P(x)$ and $Q(x)$.

\begin{theorem}\label{thm:bias}
The Bias conjecture is true for the family $E_k$. 
\end{theorem}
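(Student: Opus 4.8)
The plan is to verify the Bias Conjecture for $E_k$ directly from the explicit second moment formula \eqref{eq:m2p} already established in Theorem~\ref{eq:i3}. Recall that the conjecture concerns the averages $\mu_i$ of the normalized terms $f_i(p)/p^{i/2}$ in the expansion $M_{2,p}(\mathcal{F}_k)=p^2+f_3(p)+f_2(p)+f_1(p)+f_0(p)$, and asserts that the \emph{largest} index $i$ with $\mu_i\neq 0$ has $\mu_i<0$. So the first step is to read off, from \eqref{eq:m2p}, exactly which of the four terms $f_i(p)$ are nonzero on average. Comparing
\[
M_{2,p}(E_k)=p^2-c_f(p)-\Bigl(3+2\Bigl(\tfrac{-1}{p}\Bigr)\Bigr)p-1
\]
with the decomposition $M_{2,p}=p^2+pS_1^{(1/2)}(p)+p(S_2(p)+S_1^{(0)}(p))+S_3(p)$, I would identify the top term $f_3(p)=pS_1^{(1/2)}(p)$ with $-c_f(p)$, since $c_f(p)$ is the Fourier coefficient of a weight-$4$ newform and hence has size $O(p^{3/2})$ by Deligne's bound. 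The remaining terms $-\bigl(3+2(\tfrac{-1}{p})\bigr)p$ and $-1$ account for $f_2(p)$ (order $p$) and $f_0(p)$ (order $1$), with $f_1(p)=0$ as in the general framework of Michel.

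The crux is then to show that $\mu_3=\lim_{x\to\infty}\frac{1}{\pi(x)}\sum_{p\le x}f_3(p)/p^{3/2}=0$, so that the genuine bias is carried by the next nonzero term. Since $f_3(p)=-c_f(p)$ and $f$ is a non-CM cuspidal newform of weight $4$, the normalized coefficients $c_f(p)/p^{3/2}$ are equidistributed with respect to the Sato--Tate measure on $[-2,2]$, which is symmetric about $0$; hence their average vanishes and $\mu_3=0$. Having killed the top term, I would move to $f_2(p)$. Here the averaging is over the quantity $-\bigl(3+2(\tfrac{-1}{p})\bigr)$, and since $\bigl(\tfrac{-1}{p}\bigr)=\pm1$ each with density $\tfrac12$ by Dirichlet (equivalently by quadratic reciprocity, $\bigl(\tfrac{-1}{p}\bigr)=(-1)^{(p-1)/2}$), the Legendre contribution averages to $0$, leaving
\[
\mu_2=\lim_{x\to\infty}\frac{1}{\pi(x)}\sum_{p\le x}\frac{f_2(p)}{p}=-3.
\]
Thus the largest index $i$ with $\mu_i\neq 0$ is $i=2$, and $\mu_2=-3<0$, which is exactly the assertion of the Bias Conjecture for this family.

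The main obstacle I anticipate is not the computation of $\mu_2$, which is elementary once the terms are separated, but rather the rigorous matching of the classical Michel decomposition with the arithmetic formula \eqref{eq:m2p}: one must be sure that $-c_f(p)$ really is the degree-$3/2$ piece $f_3(p)$ and that no part of the weight-$4$ coefficient leaks into the lower terms. This is where the equidistribution input is essential, since a priori $c_f(p)$ could contain a nonzero average component; the vanishing of $\mu_3$ is precisely the statement that the only ``secular'' (nonoscillatory) parts of $M_{2,p}$ are the polynomial-in-$p$ terms $-3p$, $-2(\tfrac{-1}{p})p$ and $-1$, and that the newform coefficient contributes purely oscillatory behaviour. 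I would therefore structure the proof around two inputs: the explicit formula \eqref{eq:m2p} (which reduces everything to understanding averages of $c_f(p)$ and of $(\tfrac{-1}{p})$), and the Sato--Tate equidistribution for the non-CM newform $f=\eta(2\tau)^4\eta(4\tau)^4$, after which the conclusion $\mu_2<0$ with $\mu_3=\mu_1=0$ follows immediately and confirms the conjecture.
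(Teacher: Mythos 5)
Your proposal is correct and follows essentially the same route as the paper: identify $f_3(p)=-c_f(p)$, $f_2(p)=-\bigl(3+2\bigl(\tfrac{-1}{p}\bigr)\bigr)p$, $f_1(p)=0$, $f_0(p)=-1$ from \eqref{eq:m2p}, kill $\mu_3$ by Sato--Tate equidistribution for the non-CM newform $f$, and kill the Legendre-symbol contribution to $\mu_2$ by Dirichlet, leaving $\mu_2=-3<0$. The only cosmetic difference is your normalization of the Sato--Tate measure on $[-2,2]$ versus the paper's $[-1,1]$, which is immaterial.
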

\begin{proof}
It follows from \eqref{eq:m2p} that $f_{0}=-1$, $f_1(p)=0$, $f_2(p)=-\left(3+2\left(\frac{-1}{p}\right)\right)p$ and  $f_3(p)=-c_f(p)$.

Sato-Tate conjecture \cite[Theorem B.3]{Bartnet_Geraghty_Harris_II} implies
that the sequence of numbers $\{\theta_{p}=c_{f}(p)/(2p^{3/2})\}_{p}$ is equidistributed in the interval $[-1,1]$ with respect to the Sato-Tate measure $\textrm{ST}(t)=2/\pi \sqrt{1-t^2}dt$, i.e. for any continuous function $f$ on $[-1,1]$ we have
\[\lim_{x\rightarrow\infty}\frac{\sum_{p\leq x} f(\theta_{p})}{\pi(x)} = \int_{-1}^{1}f(t)\textrm{ST}(t).\]
In particular, when $f(t)=t$, the integral on the right vanishes and the average of $\frac{c_f(p)}{2\sqrt{p}}$ is zero, hence $\mu_3=0$.

As a consequence of the effective version of Dirichlet's theorem on arithmetic progression, since $(-1/p)=\pm 1$ if and only if $p\equiv \pm 1 \pmod{4}$, we have that  $\lim_{x\rightarrow\infty}\frac{1}{\pi(x)}\sum_{p\leq x}\frac{\left(\frac{-1}{p}\right)p}{p}=0$, thus the highest order lower term of the second moment with a non-vanishing average is $f_2(p)$ term and its average $\mu_2$ is equal to $-3$, hence the bias conjecture holds.
\end{proof}

\section*{Acknowledgements}
The authors would like to thank Anthony Scholl for several comments and suggestions which helped improve both the quality and the content of the paper. We thank Wadim Zudilin for pointing out a possible connection to modular Calabi-Yau threefolds. We would like to thank Odile Lecacheux for pointing out some inaccuracies in the earlier version of this paper. Finally, we would like to thank the anonymous referee for several helpful suggestions which improved the quality of the paper. The authors were supported by the Croatian Science Foundation under the project no.~IP-2018-01-1313. The first author
acknowledges support from the QuantiXLie Center of Excellence, a project
co-financed by the Croatian Government and European Union through the
European Regional Development Fund - the Competitiveness and Cohesion
Operational Programme (Grant KK.01.1.1.01.0004).
\section{Parametrization of the triples}\label{sec:param_of_triples}
Let $X_{k}: (x^2-1)(y^2-1)(z^2-1)=k^2$ be an affine variety defined over a field $K$ of characteristic $\neq 2$. For $k=0$ this is a union of six planes. For $k\neq 0$ it is a singular model of a K3 surface as we will prove in Lemma \ref{lemma:X_k_is_K3}.

When $\charac K=2$ the variety $X_k$ is not reduced. The reduced scheme $X_{k,red}:(x-1)(y-1)(z-1)=k$ is either a union of three lines for $k=0$ or a rational cubic surface for $k\neq 0$.

Let $X$ denote the total space of the fibration $\pi_{K3}:X\rightarrow\mathbb{A}^{1}$, $\pi_{K3}(x,y,z,k)=k$ where the fibers are varieties $X_{k}$. We have another fibration $\pi_{rat}:X\rightarrow\mathbb{A}^{1}$, $\pi_{rat}(x,y,z,k)=z$, where the fibers are denoted $Y_z$.
Let $\overline{X}$ denote the projective closure of the variety $X$
\[\overline{X}:(x^2-w^2)(y^2-w^2)(z^2-w^2)-k^2\cdot w^4=0.\]
We have natural extensions $\pi_{K3}: \overline{X}\rightarrow \mathbb{P}^{1}$, $\pi_{K3}([x:y:z:k:w])=[k:w]$ and $\pi_{rat}:\overline{X}\rightarrow \mathbb{P}^{1}$, $\pi_{rat}([x:y:z:k:w])=[z:w]$.

\begin{proposition}\label{prop:param_of_triples}
	The projective threefold $\overline{X}$ is birational to $\mathbb{P}^{3}$.
\end{proposition}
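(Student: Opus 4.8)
The plan is to realize $X$ as a conic bundle over an affine plane and to exhibit an explicit rational section, which then trivializes the bundle birationally. Since $X$ is a dense open subvariety of $\overline{X}$, the two share the same function field, so it suffices to prove that the affine threefold $X$ is rational, i.e. that $K(X)$ is a purely transcendental extension of $K$ of transcendence degree $3$.

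First I would project $X$ onto the $(x,z)$-plane via $(x,y,z,k)\mapsto(x,z)$ and regard the remaining coordinates $(y,k)$ as fibre coordinates. Setting $L=K(x,z)$ and $A=(x^2-1)(z^2-1)\in L$, the generic fibre is the plane conic
\[
C:\quad k^2 = A\,(y^2-1)
\]
over $L$. This conic is nondegenerate: homogenizing as $k^2-Ay^2+Au^2=0$, the associated quadratic form is diagonal with discriminant $-A^2$, which is nonzero since $A\neq 0$ in $L$ (and $\charac K\neq 2$). Hence $C$ is isomorphic to $\mathbb{P}^{1}_{L}$ as soon as it carries a single $L$-rational point.

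The key observation is that such a point is visible directly on $X$: the entire surface $\{y=1,\ k=0\}$ lies on $X$, because the defining equation reduces to $0=0$ there. In fibre terms this furnishes the $L$-point $(y,k)=(1,0)$ of $C$. Projecting from this point, i.e. intersecting $C$ with the pencil of lines $k=t(y-1)$ and cancelling the factor $(y-1)$, yields
\[
y=\frac{t^2+A}{t^2-A},\qquad k=\frac{2At}{t^2-A},
\]
for a new transcendental parameter $t$, with inverse $t=k/(y-1)$.

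This shows $K(X)=L(t)=K(x,z,t)$, a purely transcendental extension of $K$ of degree $3$; therefore $X$, and with it $\overline{X}$, is birational to $\mathbb{P}^{3}$. I expect no deep obstacle here: the only points requiring care are (i) checking that the generic conic is nondegenerate, so that the rational point genuinely produces a birational $\mathbb{P}^{1}$-fibration, and (ii) locating the section, which is the actual insight — note that $\{y=1,\ k=0\}$ sits inside the branch locus of the double-cover description $k^2=(x^2-1)(y^2-1)(z^2-1)$ of $\mathbb{A}^{3}$, which is precisely why this slice splits the cover and trivializes the bundle.
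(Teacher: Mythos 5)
Your proof is correct and is essentially the paper's own argument: the Remark following Proposition \ref{prop:param_of_triples} explains that the map $\psi$ is obtained by viewing the equation as a conic $(x^2-1)A=k^2$ in $(x,k)$ over the $(y,z)$-plane and projecting from the point $(x,k)=(-1,0)$ via the pencil $k=T_3(x+1)$, which is your construction up to the symmetry $x\leftrightarrow y$ and the choice of base point. The only difference is presentational — the paper records the resulting explicit mutually inverse maps $\phi,\psi$ on the projective closure (which it needs later for the parametrization of triples and for point counts), whereas you argue at the level of function fields.
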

\begin{proof}
	Let $\phi:\overline{X}\rightarrow\mathbb{P}^{3}$ denote the following rational map
	\begin{equation}
			\phi([x:y:z:k:w]) = [(x+w)y:(x+w)z:kw:(x+w)w]
	\end{equation}
	
	\noindent
	and let $\psi:\mathbb{P}^{3}\rightarrow\overline{X}$ denote the map
	\begin{equation}
		\begin{split}
			\psi([t_1:t_2:t_3:u])=[\left(t_1^2+t_2^2-t_3^2\right) u^3-t_1^2 t_2^2 u-u^5:\\
			-t_1 u^4+t_1 \left(t_1^2+t_2^2+t_3^2\right) u^2-t_1^3 t_2^2:\\
			-t_2 u^4+t_2 \left(t_1^2+t_2^2+t_3^2\right) u^2-t_1^2 t_2^3:\\
			2 t_3 \left(t_1-u\right) \left(t_1+u\right) \left(u-t_2\right) \left(t_2+u\right):\\
			\left(t_1^2+t_2^2+t_3^2\right) u^3-t_1^2 t_2^2 u-u^5]
		\end{split}
	\end{equation}
	
	We check by a direct computation that $\psi\circ\phi$ and $\phi\circ\psi$ are identity maps, hence both are birational and the proof is complete.
\end{proof}
\begin{remark}
To construct a map $\psi$ we observe that, for fixed $A$, $(x^2-1)A=k^2$ defines a conic which goes through the point $x=-1$, $k=0$. The pencil $k=T_3 (x+1)$ of lines determines a parametrization $x=\frac{A+T_3^2}{A-T_3^2}$, $k=\frac{2 A T_3}{A - T_3^2}$. We set $y=T_1$, $z=T_2$ and $A=(T_1^2-1)(T_2^2-1)$. Map $\psi$ is obtained by homogenizing the variables $T_i=t_i/u$ and the formula for its inverse $\phi$ follows from the equation of the pencil of lines.
\end{remark}

\section{Counting points in $\overline{X}(\Fp)$}\label{sec:counting}

In this section, we calculate the formula for $\#\overline{X}(\Fp)$ in two different ways - by using two fibrations of the threefold $\overline{X}$. In Section \ref{sec:fib}, we consider a fibration by rational elliptic surfaces, while in Section \ref{sec:k} we use a fibration by K3 surfaces $X_k$. Using a certain correspondence between $X_k$ and a split abelian surface we describe $\#X_k(\Fp)$ in terms of the $p$-th Fourier coefficient of a certain modular form.
Later in Section \ref{sec:triples} we use this result to derive a formula for the number of Diophantine triples over $\Fp$.

Let $q=p^m$, for a prime $p>2$ and $m\geq 1$. Let $\phi_{q}$ denote the unique multiplicative character $\phi_{q}:\Fq^{\times}\rightarrow \mathbb{C}^{\times}$ of order $2$. We need the following technical result.

\begin{proposition}[\protect{\cite[Thm. 2.1.2]{Berndt_Evans_Williams}}]\label{prop:quadratic_char_summation}
	Let $q$ be a prime power such that $2\nmid q$. Let $\alpha,\beta,\gamma\in\mathbb{F}_{q}$ be given elements. Let $\Delta=4\alpha\gamma-\beta^2$, then
	\begin{equation*}
		\sum_{t \in \mathbb{F}_{q}}\phi_{q}(\alpha t^2+\beta t+\gamma)=\left\{\begin{array}{cc}
			-\phi_q(\alpha), & \alpha\neq 0, \Delta\neq 0\\
			(q-1)\phi_{q}(\alpha) ,& \alpha\neq 0, \Delta=0\\
			q\phi_{q}(\gamma)& \alpha=0, \Delta=0
		\end{array}\right.
	\end{equation*}
\end{proposition}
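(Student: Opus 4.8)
The plan is to evaluate the sum by \emph{completing the square}, which is legitimate precisely because $2\nmid q$ makes $2$ invertible in $\Fq$; this reduces every case to a short list of one-variable quadratic-character sums. Throughout I would adopt the convention $\phi_{q}(0)=0$, so that $\phi_{q}$ is completely multiplicative on all of $\Fq$ and the number of square roots of any $w\in\Fq$ equals $1+\phi_{q}(w)$. The degenerate case $\alpha=0$ is immediate: there $\Delta=-\beta^{2}$, so $\Delta=0$ forces $\beta=0$ and the summand is the constant $\phi_{q}(\gamma)$, giving $q\,\phi_{q}(\gamma)$, which is the third line. (When $\alpha=0$ but $\beta\neq0$ the argument $\beta t+\gamma$ runs over all of $\Fq$ as $t$ does, and the sum vanishes, which is why that case is absent from the statement.)

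For $\alpha\neq0$ I would substitute $t\mapsto t-\beta/(2\alpha)$, a bijection of $\Fq$, and use
\[
\alpha t^{2}+\beta t+\gamma=\alpha\Bigl(t+\tfrac{\beta}{2\alpha}\Bigr)^{2}+\frac{\Delta}{4\alpha},
\]
so that after factoring out $\alpha$ and invoking multiplicativity the sum becomes $\phi_{q}(\alpha)\sum_{u\in\Fq}\phi_{q}(u^{2}+c)$ with $c=\Delta/(4\alpha^{2})$; note that $c=0$ exactly when $\Delta=0$. If $c=0$ then $\sum_{u}\phi_{q}(u^{2})=\sum_{u\neq0}\phi_{q}(u)^{2}=q-1$, yielding the second line. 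The heart of the matter is therefore the single sum $\sum_{u}\phi_{q}(u^{2}+c)$ with $c\neq0$, which produces the first line once it is shown to equal $-1$.

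To evaluate that sum I would rewrite it as $\sum_{w}(1+\phi_{q}(w))\phi_{q}(w+c)$ by grouping the $u$ having a common value $w=u^{2}$. The term $\sum_{w}\phi_{q}(w+c)$ vanishes, being a nontrivial multiplicative character summed over all of $\Fq$, and there remains $\sum_{w}\phi_{q}(w)\phi_{q}(w+c)=\sum_{w}\phi_{q}(w^{2}+cw)$. Scaling $w=c\,w'$ and using $\phi_{q}(c^{2})=1$ reduces this to the universal constant $\sum_{w}\phi_{q}(w^{2}+w)$, which I would compute by writing $w^{2}+w=w^{2}(1+w^{-1})$ for $w\neq0$: as $w$ ranges over $\Fq^{\times}$ the argument $1+w^{-1}$ ranges over $\Fq\setminus\{1\}$, so the sum equals $\sum_{r\neq1}\phi_{q}(r)=-\phi_{q}(1)=-1$. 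I expect this last evaluation to be the only genuinely non-formal step and hence the main obstacle; the change of variables $w\mapsto w^{-1}$ is the cleanest route, as it sidesteps any appeal to Jacobi or Gauss sums, while every other step is bookkeeping with completing the square and multiplicativity.
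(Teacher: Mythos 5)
Your argument is correct and complete. The paper itself does not prove this proposition --- it is imported verbatim as Theorem 2.1.2 of Berndt--Evans--Williams --- so there is no internal proof to compare against; your write-up supplies the standard derivation that the reference contains. All the individual steps check out: the convention $\phi_q(0)=0$ does make $\phi_q$ completely multiplicative on all of $\Fq$ and makes $1+\phi_q(w)$ count square roots (including $w=0$); completing the square is valid since $2$ is invertible; the reduction to $\phi_q(\alpha)\sum_u\phi_q(u^2+\Delta/(4\alpha^2))$ is exact; and the case split on $c=\Delta/(4\alpha^2)$ being zero or not reproduces the second and first lines. The one step that carries real content, the evaluation $\sum_w\phi_q(w^2+cw)=-1$ for $c\neq 0$, you handle by rescaling to $c=1$ and then factoring $w^2+w=w^2(1+w^{-1})$ so that the inner argument sweeps out $\Fq\setminus\{1\}$; this is a clean elementary route that avoids invoking Jacobi sums, which is how this count is usually packaged. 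The only cosmetic remark is that your computation also shows the statement's third line is slightly redundant as written (when $\alpha=0$ the condition $\Delta=0$ already forces $\beta=0$), and that the omitted case $\alpha=0$, $\beta\neq 0$ gives $0$ --- you correctly note both.
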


For a field $K$ of characteristic not equal to $2$ and two elements $k,z\in K$ such that $k\neq 0$ and $z^2\neq 1$ we have an affine curve 

\[X_{k,z}:(x^2-1)(y^2-1)=\frac{k^2}{z^2-1}.\]
Let $a=\sqrt[4]{1-\frac{k^2}{z^2-1}}$. Under the substitution $x\mapsto x'\cdot a,\ y\mapsto y'\cdot a$, the curve $X_{k,z}$ is isomorphic to $(x')^2+(y')^2 = a^2 (1+(x')^2 (y')^2)$ which is famously known as the Edwards normal form \cite{Edwards_curve}. 
The curve of such form has geometric genus $1$, hence is elliptic. In fact, we have a birational map $\phi_{k,z}:X_{k,z} \dashrightarrow Y_{k,z}$ defined over $K$ to a Weierstrass model \cite[III.1]{Silverman_Arithmetic}
\begin{equation}\label{eq:Ykz_model}
Y_{k,z}: Y^2 = X^3 + (4z^4 + (-2k^2 - 8)z^2 + (2k^2 + 4))X^2 + k^4(z^2-1)^2 X.
\end{equation}
where
\[\phi_{k,z}(x,y) = \left(\frac{k^2 (x+1) \left(z^2-1\right)}{x-1},\frac{2 k^2 (x+1) y \left(z^2-1\right)^2}{1-x}\right),\]
\[\phi_{k,z}^{-1}(X,Y) = \left(\frac{2 X}{k^2(1-z^2)+X}-1,\frac{Y}{2X(1-z^2)}\right)\]
The map $\phi_{k,z}$ induces a morphism of degree $1$ 
$$\phi_{k,z}:X_{k,z}\rightarrow Y_{k,z}\setminus \{P_1,P_2,P_3\},$$
where $P_1=(0,0)$, $P_2=(k^2 (z^2-1),-2 k^2 (z^2-1)^2)$ and $P_3=(k^2 \left(z^2-1\right),2 k^2 \left(z^2-1\right)^2)$.

\subsection{Fibration with rational elliptic surfaces}\label{sec:fib}
In this part, we are going to present proof of the following proposition

\begin{lemma}\label{lemma:pts}
	Let $p$ be a prime and let $q=p^m$ for any $m\geq 1$. It follows that

	\begin{equation}\label{eq:X_bar_point_count}
	\#\overline{X}(\Fq) = q^3+3q^2+\max\{3 - p, 0\}
	\end{equation}
	and
	\begin{equation}\label{eq:1}
	\#(X\setminus X_{k=0})(\Fq) = \left\{\begin{array}{lr}
	q^3-6q^2+12q-9, & p>2\\
	q^3-3q^2+3q-1, & p=2
	\end{array}\right.
	\end{equation}
\end{lemma}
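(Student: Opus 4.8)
The plan is to count $\#\overline{X}(\Fq)$ by separating the hyperplane at infinity $\{w=0\}$ from the affine chart, computing the affine total $\#X(\Fq)$ through the fibration $\pi_{rat}$, and then obtaining \eqref{eq:1} by deleting the fibre $X_{k=0}$. Throughout I extend $\phi_q$ to all of $\Fq$ by $\phi_q(0)=0$, so that the number of $k\in\Fq$ with $k^2=a$ equals $1+\phi_q(a)$ for every $a$, and $\phi_q$ stays multiplicative. I would first dispose of the points at infinity: setting $w=0$ in the equation of $\overline{X}$ gives $x^2y^2z^2=0$, i.e.\ $xyz=0$, so the locus at infinity is the union of the three coordinate hyperplanes $\{x=0\},\{y=0\},\{z=0\}$ inside the $\mathbb{P}^3$ with coordinates $[x:y:z:k]$. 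An inclusion--exclusion over these three $\mathbb{P}^2$'s, their pairwise intersections (which are $\mathbb{P}^1$'s) and their single common point yields $3(q^2+q+1)-3(q+1)+1=3q^2+1$ points, a count valid in every characteristic.

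For the affine part in odd characteristic I would use the fibration $\pi_{rat}(x,y,z,k)=z$ and count each fibre $Y_z$, which is the rational elliptic surface of Section~\ref{sec:fib} (fibred by the curves $X_{k,z}$). Rather than summing $\#X_{k,z}(\Fq)$ over the elliptic fibration, I count $Y_z$ directly over the pair $(x,y)$. If $z^2\neq 1$, the number of admissible $k$ over $(x,y)$ is $1+\phi_q((z^2-1)(x^2-1)(y^2-1))$, so by multiplicativity
\[\#Y_z(\Fq)=q^2+\phi_q(z^2-1)\Big(\sum_{x\in\Fq}\phi_q(x^2-1)\Big)^2.\]
Applying Proposition~\ref{prop:quadratic_char_summation} with $(\alpha,\beta,\gamma)=(1,0,-1)$, whose discriminant $\Delta=-4$ is nonzero in odd characteristic, gives $\sum_x\phi_q(x^2-1)=-1$, hence $\#Y_z(\Fq)=q^2+\phi_q(z^2-1)$. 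The two degenerate fibres $z=\pm1$ force $k=0$ and leave $(x,y)$ free, contributing $q^2$ points each.

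Summing over $z\in\Fq$ and using $\sum_{z}\phi_q(z^2-1)=-1$ once more gives $\#X(\Fq)=q^3-1$, and adding the $3q^2+1$ points at infinity yields $\#\overline{X}(\Fq)=q^3+3q^2$, matching \eqref{eq:X_bar_point_count} for $p>2$. To obtain \eqref{eq:1} I would compute $\#X_{k=0}(\Fq)=q^3-(q-2)^3=6q^2-12q+8$ as the complement of $\{x^2\neq1,\,y^2\neq1,\,z^2\neq1\}$ and subtract, giving $q^3-6q^2+12q-9$.

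The genuinely separate case, and the main point requiring care, is $p=2$, where Proposition~\ref{prop:quadratic_char_summation} is unavailable since $\Delta=-4=0$. Here the Frobenius $t\mapsto t^2$ is a bijection of $\Fq$ and $x^2-1=(x-1)^2$, so the equation of $X$ collapses to $(x-1)(y-1)(z-1)=k$; thus $X(\Fq)$ is the graph of this function and $\#X(\Fq)=q^3$. Removing $k=0$ (the complement of $(q-1)^3$) gives $\#(X\setminus X_{k=0})(\Fq)=q^3-3q^2+3q-1$, while adding the same $3q^2+1$ points at infinity gives $\#\overline{X}(\Fq)=q^3+3q^2+1$. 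Since $\max\{3-p,0\}$ equals $1$ exactly when $p=2$ and $0$ for all odd $p$, the two characteristic cases combine into the single formula \eqref{eq:X_bar_point_count}. I expect no serious obstacle beyond keeping the bookkeeping of the degenerate fibres $z=\pm1$, the $k=0$ locus, and the characteristic-$2$ reduction straight.
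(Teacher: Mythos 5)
Your proof is correct, and the key step is genuinely different from the paper's. For the count at infinity, the $p=2$ degeneration, and the final subtraction of $\#X_{k=0}(\Fq)=q^3-(q-2)^3$, you and the paper do essentially the same thing. The difference is in the heart of the odd-characteristic case: the paper computes $\#Y_z(\Fq)$ by passing to the relatively minimal rational elliptic surface $\tilde{Y}_z$, invoking Shioda--Tate to get $\#\tilde{Y}_z(\Fq)=1+10q+q^2$, and then painstakingly correcting for the singular fibres (split $I_8$ at $k=0$, non-split $I_2$ at $k=\infty$, the two $I_1$'s at $k=\pm\sqrt{z^2-1}$) and for the points $\{P_1,P_2,P_3,\mathcal{O}\}$ deleted by the degree-one map $\phi_{k,z}$. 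You instead fibre $Y_z$ over the $(x,y)$-plane and reduce everything to the single character sum $\sum_x\phi_q(x^2-1)=-1$ from Proposition~\ref{prop:quadratic_char_summation} (with the standard convention $\phi_q(0)=0$, under which $\#\{k:k^2=a\}=1+\phi_q(a)$ and multiplicativity persists), arriving at the same intermediate identity $\#Y_z(\Fq)=q^2+\phi_q(z^2-1)$ in three lines. Your route is markedly more elementary and self-contained for the purposes of this lemma; what the paper's longer route buys is that the fibre-by-fibre analysis of $\tilde{Y}_z$ (reduction types, split versus non-split components, the correction terms for $\phi_{k,z}$) is reused verbatim in Section~\ref{sec:k} for the K3 fibration, so it is not redundant in the paper as a whole. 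The paper also sketches a third, purely computational proof of \eqref{eq:X_bar_point_count} via the birational map of Proposition~\ref{prop:param_of_triples}; your argument is distinct from that one as well.
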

\begin{proof}
Let $\overline{X}$ denote a projective closure of the threefold $X$. We have
\[\overline{X}:(x^2-w^2)(y^2-w^2)(z^2-w^2)-k^2\cdot w^4=0.\]

The intersection of $\overline{X}$ with a hyperplane $w=0$ is a union three planes $P_{x}: x=w=0$, $P_{y}: y=w=0$ and $P_{z}:z=w=0$. The planes meet at three distinct lines, which in turn meet at a unique point $[0:0:0:0:1]$. Hence
\[\#\overline{X}(\Fq)= \# X(\Fq)+3q^2+1.\]

For $p=2$ the scheme $X$ is non-reduced. Its reduced subscheme is $(x-1)(y-1)(z-1)=k$, hence $\#X(\Fq) = q^3$. The subscheme $X_{k=0}$ is again non-reduced and its reduced subscheme~is a union of three planes which intersect at three distinct lines and such that have a common point of intersection. Altogether we have $\# X_{k=0}(\Fq) = 3q^2-3q+1$ and $\#(X\setminus X_{k=0})(\Fq) = q^3-3q^2+3q-1$.

Assume that $p>2$. We are going to prove that $\#X(\Fq)=q^3-1$.
We consider the fibration $\pi_{rat}:\overline{X}\rightarrow\mathbb{P}^{1}$ such that $[x:y:z:w]\mapsto [z:w]$.

Let $z\neq \pm 1$. Let $\tilde{Y}_{z}$ denote a smooth projective relatively minimal model of $Y_{z}$ with a natural fibration $\pi_{z}:\tilde{Y}_{z}\rightarrow\mathbb{P}^{1}$. This is a rational elliptic surface \cite[Chap. 7]{Schutt_Shioda} which is at $k=0$ of split type $I_8$ and at $k=\infty$ of non-split type $I_2$. At $k=\pm \sqrt{z^2-1}$ there is a singular fiber of type $I_1$. 
The~fibers cannot coincide under condition $z\neq \pm 1$.
 The~N\'{e}ron-Severi group of $\tilde{Y}_{z}$ is free of rank $10$ and is spanned entirely by the image of the zero section, general fiber and components of the special fibers which is a consequence of the Shioda-Tate formula \cite{Shioda_MW}. 
 For any prime $\ell\neq p$ it follows that $H^{2}_{ét}((\tilde{Y}_{z})_{\overline{\F}_q},\mathbb{Q}_{\ell})$ is spanned entirely by the images of curves from the N\'{e}ron-Severi group.
 Since all the components are defined over $\Fp$ it follows from the Grothendieck-Lefschetz trace formula \cite[C.4]{Hartshorne} that
$$\#\tilde{Y}_{z}(\Fq) = 1+10q+q^2.$$

Let $\tilde{Y}_{k,z}$ denote a fiber $\pi^{-1}_{z}(k)$. For $k=\infty$ we have a fiber of type $I_2$ which is non-split, hence
\begin{equation}
\# \tilde{Y}_{\infty,z}(\Fq) = 2q + 1-\phi_q(z^2-1).
\end{equation}

For $k=0$ we have a fiber of type $I_{8}$ which is split, hence
\begin{equation}
\# \tilde{Y}_{0,z}(\Fq) = 8q.
\end{equation}

If $\phi_{q}(z^2-1)=1$ there are two fibers above $\pm \alpha$, where $\alpha^2 = z^2-1$ in $\Fq$. Both fibers $\widetilde{Y}_{\pm \alpha,z}$ are isomorphic over $\Fq$ to the nodal projective curve $ZY^2=X^3-X^2Z$, hence
\begin{equation}
\# \tilde{Y}_{\pm\alpha,z}(\Fq) = q+1-\phi_{q}(-1).
\end{equation}

From the existence of the fibration $\pi_{z}$ it follows that
\begin{equation}
\# \tilde{Y}_{z}(\Fq)=
 \# \tilde{Y}_{\infty,z}(\Fq)+\# \tilde{Y}_{0,z}(\Fq) + (1+\phi_{q}(z^2-1))\# \tilde{Y}_{\alpha,z}(\Fq)
 + \sum_{\substack{k\in \Fq\\ k(k^2-z^2+1)\neq 0}}\# \tilde{Y}_{k,z}(\Fq).
\end{equation}

For $k\in \Fq$ such that $k(k^2-z^2+1)\neq 0$ we have the equality $\# X_{k,z}(\Fq) = \# \tilde{Y}_{k,z}(\mathbb{F}_{q}) - 4$ since the map $\phi_{k,z}$ is a morphism of degree $1$ onto $\tilde{Y}_{k,z}\setminus\{P_1,P_2,P_3,\mathcal{O}\}$.

On the other hand, the fibration $\pi_{rat}$ on $X$ provides us with the equality
$$\#X(\Fq) = \#X_{z=1}(\Fq)+ \#X_{z=-1}(\Fq)+\sum_{z\in\Fq,z^2\neq 1} \# X_{z}(\Fq).$$

For $z=\pm 1$ the variety $X_{z}$ is isomorphic to $\mathbb{A}^2$, so $\# X_{z=\pm 1}(\Fq) = q^2$.

For $z\neq \pm 1$ it follows that
\[
\# X_{z}(\Fq) = \# X_{0,z}(\Fq)+ (1+\phi_{q}(z^2-1))\# X_{\alpha,z}(\Fq) 
+  \sum_{\substack{k\in \Fq,\\ k(k^2-z^2+1)\neq 0}}\# X_{k,z}(\Fq).
\]
Variety $X_{0,z}$ is a union of four affine lines with a circular arrangement (type $I_4$), hence $\# X_{0,z}(\Fq) = 4q-4$.

For an $\alpha$ such that $\alpha^2 = z^2-1$ in $\Fq$ we have that $X_{\alpha,z}$ is a genus $0$ curve with equation $(x^2-1)(y^2-1)=1$. It follows that
\[\# X_{\alpha,z}(\Fq) = q-3-\phi_{q}(-1).\]
It is easy to check that $\# X_{k=0}(\Fq) = 6q^2-12q+8$ since the scheme $X_{k=0}$ is a configuration of $6$ planes which intersect in the cubical arrangement.
The formulas above lead us to the conclusion that
\[\# X(\Fq) = 2q^2+\sum_{z^2\neq 1} (q^2+\phi_{q}(z^2-1)) = q^3-1.\]
\end{proof}
\begin{remark}
	It is possible to provide a very short computational proof of the formula \eqref{eq:X_bar_point_count} of Lemma \ref{lemma:pts} using only Proposition \ref{prop:param_of_triples}. We compute (in Magma) the explicit schemes $S_{\phi}$ and $S_{\psi}$ such that $\phi,\psi^{-1}: \overline{X}\setminus S_{\phi}\rightarrow \mathbb{P}^{3}\setminus S_{\psi}$ are isomorphisms of schemes. 
	Scheme $S_{\psi}$ is a union of five projective planes
	\[u=0;\quad t_2=\pm u;\quad t_1=\pm u\]
	and a rational surface $\mathcal{P}$
	\[t_1^2 t_3^2 - t_2^2 u^2 - t_3^2 u^2 + u^4=0.\]
	Scheme $S_{\phi}$ is a union of $8$ projective planes
	\[x-w=k=0;\quad x=w=0;\quad y=w=0;\quad z=w=0;\quad y\pm w=k=0;\quad z\pm w=k=0.\]
	It follows that $|S_{\phi}(\mathbb{F}_{q})| = 8(q^2+q+1)-16(q+1)+13$ and $|S_{\psi}(\mathbb{F}_{q})|=5 (q^2 + p + 1) - 16 (q + 1) + 15 + \underbrace{(q^2 + 4 q + 2)}_{=|\mathcal{P}(\mathbb{F}_{q})|}$ and the formula \eqref{eq:X_bar_point_count} easily follows for every field $\mathbb{F}_{q}$ of odd characteristic.
\end{remark}

\subsection{Correspondence between $X_k$ and an abelian surface} \label{sec:shioda}
Let $Z$ denote a K3 surface defined over the complex numbers. The surface $Z$ admits a correspondence with an abelian surface $A$ if there is a diagram of finite maps
\begin{figure}[htb]
	\begin{tikzpicture}
	
	\node (v1) at (-2.5,1.5) {$A$};
	\node (v3) at (1.5,1.5) {$Z$};
	\node (v2) at (-0.5,-0.5) {$\Kum(A)$};
	\draw[->]  (v1) edge[dashed] (v2);
	\draw [->] (v3) edge[dashed] (v2);
	\end{tikzpicture}
	\caption{Correspondence between $Z$ and $A$.}\label{fig:Shioda_Inose}
\end{figure}
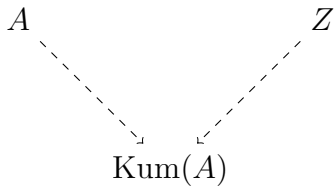

\noindent
where $\Kum(A)$ denotes the Kummer surface attached to $A$, i.e. a resolution of singularities of the quotient surface $A/\langle\pm 1\rangle$. 

\begin{lemma}\label{lemma:X_k_is_K3}
	Let $K$ be a field of characteristic not equal to $2$ and let $k\in K$ be a nonzero element. A surface $X_k$ has a projective smooth minimal model $\widetilde{X}_{k}$ over $K$ which is a K3 surface, i.e. $H^1(\widetilde{X}_{k},\mathcal{O}_{\widetilde{X}_{k}})=0$ and the canonical class $K_{\widetilde{X}_{k}}$ is trivial.
\end{lemma}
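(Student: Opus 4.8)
The plan is to exhibit an explicit birational model of $X_k$ as a double cover of $\mathbb{P}^1\times\mathbb{P}^1$ branched along a curve of bidegree $(4,4)$, and then to read off the two defining properties of a K3 surface from the standard theory of double covers, combined with a resolution of the (simple) singularities of the branch curve. This reduces the problem to a finite, local singularity analysis.

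First I would eliminate $z$. Writing $h=(x^2-1)(y^2-1)$ and $g=h+k^2$, the equation $(x^2-1)(y^2-1)(z^2-1)=k^2$ is equivalent to $z^2=g/h$ away from $h=0$; setting $Z=z\,h$ turns this into $Z^2=g(x,y)\,h(x,y)$, and $(x,y,z)\mapsto(x,y,Z)$ is birational onto this double cover (with inverse $z=Z/h$). Regarding $x,y$ as affine coordinates on $W:=\mathbb{P}^1\times\mathbb{P}^1$, the product $gh$ is a section of $\mathcal{O}(4,4)$, so $X_k$ is birational to a double cover $\pi\colon Y\to W$ branched along $B=\{gh=0\}\in|\mathcal{O}(4,4)|=|{-2K_W}|$. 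Then I would invoke the canonical-bundle formula: with $L=\mathcal{O}(2,2)$, so that $B\in|L^{\otimes 2}|$ and $\pi_*\mathcal{O}_Y=\mathcal{O}_W\oplus L^{-1}$, one gets $K_Y=\pi^*(K_W+L)=\pi^*\mathcal{O}_W=\mathcal{O}_Y$, and $h^1(\mathcal{O}_Y)=h^1(\mathcal{O}_W)+h^1(W,\mathcal{O}(-2,-2))=0$ by K\"unneth. Thus $Y$ has trivial canonical sheaf and vanishing irregularity, and the same will hold for its minimal resolution once the singularities are shown to be crepant.

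The main step is to analyze the singularities of $B$ and hence of $Y$. Here $B$ is the union of the four lines $x=\pm1$, $y=\pm1$ (the locus $h=0$) and the $(2,2)$-curve $C=\{g=0\}$. I would check: (i) the four lines meet pairwise transversally at $(\pm1,\pm1)$, none of which lies on $C$, giving ordinary nodes ($A_1$); (ii) $C$ is disjoint from the lines in the affine chart but is tangent to each of them, to order $2$, at the four points at infinity $(\pm1,\infty)$, $(\infty,\pm1)$, producing tacnodes ($A_3$); (iii) $C$ is smooth when $k^2\neq-1$ and acquires a single node at $(0,0)$ when $k^2=-1$, while $(\infty,\infty)\notin B$. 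Hence $B$ has only simple (ADE) singularities, so $Y$ has only rational double points, of type $A_1$ over the nodes and $A_3$ over the tacnodes. Since rational double points are rational singularities admitting crepant resolutions, the minimal resolution $\widetilde{X}_k\to Y$ satisfies $K_{\widetilde{X}_k}=\mathcal{O}_{\widetilde{X}_k}$ and $h^1(\mathcal{O}_{\widetilde{X}_k})=0$; being smooth with these invariants and containing no $(-1)$-curves, it is the unique minimal model of $X_k$ and a K3 surface.

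The delicate points I expect to dominate the work are the local computation at the four tangency points at infinity (confirming they are exactly tacnodes, hence $A_3$ and not something worse) and making the resolution/crepancy argument valid over an arbitrary field of characteristic $\neq2$: one must verify that the relevant double points remain rational double points with crepant resolutions in positive characteristic and that all singular points are $K$-rational, so that $\widetilde{X}_k$ is genuinely defined over $K$.
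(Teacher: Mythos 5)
Your argument is correct, but it takes a genuinely different route from the paper's. The paper proves the lemma by passing to the elliptic fibration in the variable $z$: the surface $X_k$ is birational to the elliptic surface with generic fiber $Y_{k}$ of \eqref{eq:Ykz_model}, the Tate algorithm shows this Weierstrass model is globally minimal, and the total degree of the minimal discriminant (degree $16$ from $-16k^8(z^2-1)^7(k^2-z^2+1)$ plus the $I_8$ fiber at $z=\infty$) forces $\chi(\mathcal{O})=2$, hence K3; minimality then follows from adjunction exactly as in your last step. Your double-cover argument --- rewriting the equation as $Z^2=gh$ with $h=(x^2-1)(y^2-1)$, $g=h+k^2$, viewing it as a double cover of $\mathbb{P}^1\times\mathbb{P}^1$ branched along a $(4,4)$-curve in $|-2K_W|$, and checking that the branch curve has only $A_1$ and $A_3$ singularities (your local computations at $(\pm1,\pm1)$, at the four tangency points at infinity, and at $(0,0)$ when $k^2=-1$ all check out) --- is essentially the strategy the paper itself sketches in the remark immediately following the lemma, there phrased as a double cover of $\mathbb{P}^2$ branched along a reducible sextic. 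The trade-off: the paper's fibration argument is shorter and reuses the model $Y_{k,z}$ and its Kodaira fiber types, which are needed anyway for the point counts and the N\'{e}ron--Severi computations later in Section \ref{sec:shioda}; your approach is more self-contained and makes the triviality of $K_{\widetilde{X}_k}$ and the vanishing of $h^1(\mathcal{O})$ completely explicit via the canonical bundle formula, at the cost of the local ADE analysis and the (correctly flagged, and standard for $\charac K\neq 2$) verification that these rational double points admit crepant resolutions defined over $K$ in positive characteristic.
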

\begin{proof}
	Surface $X_k$ is birational to an elliptic surface $\mathcal{Y}_{k}$ with general fiber $Y_{k}: Y^2=f(X,z)$ from \eqref{eq:Ykz_model}.  An application of the Tate algorithm \cite[IV \S 9]{Silverman_Advanced} shows that the Weierstrass model $Y_{k}$ over $K(z)$ is globally minimal in the sense of \cite[\S 5.10]{Schutt_Shioda}. The elliptic discriminant of the model $Y_{k}$ equals $-16 k^8 \left(z^2-1\right)^7 \left(k^2-z^2+1\right)$ which is of degree $16$ with respect to the fibration variable $z$. Hence, the Euler characteristic of the elliptic surface $\mathcal{Y}_{k}$ equals $2$ (cf. \cite[\S 5.13]{Schutt_Shioda}) proving that $\mathcal{Y}_{k}$ is a K3 surface \cite[Chap. 11]{Schutt_Shioda}. It follows from the adjunction formula that for any smooth curve $C$ on a K3 surface $2g(C)-2=C^2$, hence there are no $-1$ curves. This proves that the surface $\mathcal{Y}_{k}$ is minimal, hence we take for $\widetilde{X}_{k}$ the surface $\mathcal{Y}_{k}$.
\end{proof}
\begin{remark}
	Equation $B:f(X,z)=0$ defines a reducible sextic curve on the affine plane. Its projective closure has degree $6$ and all singular points are simple, i.e. at most of multiplicity $3$. Projectivized equation $Y^2=\tilde{f}(X,z,w)$ defines a surface in the weighted projective space $\mathbb{P}(1,1,1,3)$ and determines a normal double cover $T_{k}\rightarrow \mathbb{P}^2$ branched along the sextic curve $B$.
	Its~canonical resolution of singularities $\widetilde{T}_{k}$ has plurigenus $p_g(\widetilde{T}_{k}) = 1$ and irregularity $q(\widetilde{T}_{k})=0$, proving that $\widetilde{T}_{k}$ is a $K3$ surface \cite[V.22]{Barth_Hulek_Peters_Van_de_Ven}.
\end{remark}

In what follows we prove that for each $k\in K$, $k\neq 0$ and $K$ not of characteristic $2$, the elliptic K3 surface $Z=\widetilde{X}_{k}$ admits a correspondence with an abelian surface $A=E\times E$ which is a square of the elliptic curve $E=E_{k}$. In consequence, we obtain an explicit description of the transcendental part of $H^2(Z)$ cohomology (either singular or $\ell$-adic) of $Z$ in terms of the cohomology of $H^1(E)$ cohomology of a given elliptic curve $E$. 

\textbf{Convention:} We use the notation $\mathcal{F}$ to denote an elliptic surface with a generic fiber $F$, which is an elliptic curve over a rational function field.

\begin{lemma}\label{lemma:Shioda_Inose_on_X_k}
	Let $k\in K^{\times}$ be an element of a field $K $ of characteristic not equal to $2$. There exists a finite rational map $f:X_{k}\rightarrow Kum(E_k\times E_k)$ defined over $K(\sqrt{k^2+1})$ where $E_k$ is an elliptic curve over $K$ which depends on $k$. Thus the K3 surface $\widetilde{X}_{k}$ which corresponds to $X_k$ admits a correspondence with the abelian variety $E_{k}\times E_{k}$.
\end{lemma}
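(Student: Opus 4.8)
The plan is to turn $X_k$ into the elliptic K3 surface produced in Lemma~\ref{lemma:X_k_is_K3} and then to recognise it as a Shioda--Inose surface attached to the self-product $E_k\times E_k$. By Lemma~\ref{lemma:X_k_is_K3} the surface $X_k$ is birational to the elliptic K3 surface $\mathcal{Y}_k/K(z)$ of \eqref{eq:Ykz_model}, which I first rewrite as $Y^2=X(X^2+a_2X+a_4)$ with $a_2=2(z^2-1)(2z^2-k^2-2)$ and $a_4=k^4(z^2-1)^2$; crucially both coefficients lie in $K[z^2]$. Running Tate's algorithm (as in Lemma~\ref{lemma:X_k_is_K3}, whose discriminant vanishes to order $7$ at $z=\pm1$, to order $1$ at $z=\pm\sqrt{k^2+1}$, and to the complementary order $8$ at $z=\infty$) gives the singular fibres: type $I_1^{*}$ over $z=\pm1$, type $I_8$ over $z=\infty$, and two fibres of type $I_1$ over $z=\pm\sqrt{k^2+1}$, with Euler numbers $7+7+8+1+1=24$. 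The trivial lattice (generated by the zero section, a general fibre and the fibre components) then has rank $19$, so the transcendental lattice of $\widetilde{X}_k$ has rank at most $3$, matching the rank of the transcendental lattice of a self-product $E\times E$; this is the first sign that the correspondence should be with a square and not a generic product.

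The structural input is the involution $z\mapsto-z$ together with the two-torsion section $(0,0)$. Since $a_2,a_4\in K[z^2]$, the fibration descends under $w=z^2$ to a rational elliptic surface $\mathcal{R}/K(w)$ (one checks its discriminant has degree $8$ and an $I_4$ fibre at $w=\infty$, so $\mathcal{R}$ has Euler number $12$), and $\mathcal{Y}_k$ is a quadratic base change of $\mathcal{R}$. I would use this together with the fibrewise $2$-isogeny attached to $(0,0)$ to verify Morrison's criterion that $\NS(\widetilde{X}_k)$ contains $E_8\oplus E_8$, which produces a Shioda--Inose structure over $\overline{K}$; the rank-$3$ (or, in the complex-multiplication case, rank-$2$) transcendental lattice then forces the abelian surface in the diagram of Figure~\ref{fig:Shioda_Inose} to be isogenous to a square $E\times E$ (the relevant fibrations on these K3 surfaces are of the type classified in \cite{Bertin_Lecacheux}). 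Computing the $j$-invariant of the resulting curve gives $j=64(k^2+4)^3/k^4$, and fixing the quadratic twist that makes it defined over $K$ yields exactly $E_k$. Its quadratic factor has roots $-(1+k^2)^2\pm(1+k^2)^{3/2}$, so the full $2$-torsion of $E_k$ becomes rational precisely over $K(\sqrt{k^2+1})$ --- the same field over which the two $I_1$ fibres at $z=\pm\sqrt{k^2+1}$ split individually --- which is the source of the field of definition in the statement.

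With $E_k$ identified, the final step is to make the correspondence explicit. Over $K(\sqrt{k^2+1})$ the curve $E_k$ has rational $2$-torsion, so I would write down a concrete model of $\Kum(E_k\times E_k)$ (for instance its Inose Weierstrass model, or the classical singular quartic model) together with the canonical degree-$2$ quotient $E_k\times E_k\dashrightarrow\Kum(E_k\times E_k)$, which is the left arrow of Figure~\ref{fig:Shioda_Inose}. It then remains to assemble the base-change and isogeny maps above into an explicit rational map $f\colon X_k\dashrightarrow\Kum(E_k\times E_k)$ and to check by a direct symbolic computation that $f$ is dominant and generically finite --- hence finite after passing to the smooth model $\widetilde{X}_k$ --- and that all its coordinate functions have coefficients in $K(\sqrt{k^2+1})$. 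Because every map involved is purely algebraic, these identities hold over any field of characteristic $\neq2$, which is exactly what licenses the use of the lemma over $\Fp$ later on.

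The main obstacle is this last explicit construction together with the twist bookkeeping: the $j$-invariant pins down $E_k$ only up to quadratic twist, and one must single out the twist by $k^2+1$ so that $E_k$ is defined over $K$ while $f$ is only defined over $K(\sqrt{k^2+1})$, and then verify finiteness of $f$ uniformly in $k$ and in the characteristic rather than merely over $\mathbb{C}$ via Hodge theory. Finally I would isolate the degenerate values: $k=0$ is excluded (there $X_k$ is a union of planes, not a K3), and at $k^2=-1$ the factor $z^2-k^2-1$ becomes $z^2$, so the two $I_1$ fibres collide at $z=0$ and $E_k$ specialises to the CM curve $y^2=x^3-x$ with $j=1728$; this case has to be treated on its own, in agreement with the statement of the lemma.
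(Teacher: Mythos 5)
Your outline (pass to the elliptic model \eqref{eq:Ykz_model}, exploit the $2$-torsion point $(0,0)$, match transcendental ranks, and land on a fibration of $\Kum(E_k\times E_k)$) points in the right direction, and your fibre types, Euler number count and the $j$-invariant $64(k^2+4)^3/k^4$ of $E_k$ are all correct. But the load-bearing step of your argument --- invoking Morrison's criterion to produce a Shioda--Inose structure and then declaring that its abelian surface is (isogenous to) $E_k\times E_k$ --- does not work here, and the paper explicitly warns against it: in the remark following Proposition \ref{prop:sq_m1_case} the authors note that the correspondence of Lemma \ref{lemma:Shioda_Inose_on_X_k} is \emph{not} a Shioda--Inose structure, because the necessary discriminant condition $\disc T(\widetilde{X}_{k})[2]=\disc T(\mathcal{W}_{k})$ fails. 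So the abelian surface singled out by Morrison's theorem is not $E_k\times E_k$, and your plan to read off $E_k$ as ``the resulting curve'' of that structure collapses (at best it would yield a correspondence with a different abelian surface, requiring further isogeny arguments you do not supply). Two further soft spots: a rank-$3$ transcendental lattice does not by itself force the abelian surface to be isogenous to a square (simple abelian surfaces with quaternionic multiplication also have Picard number $3$), and the Hodge-theoretic package is a priori a statement over $\mathbb{C}$, whereas the lemma is needed over any field of characteristic $\neq 2$; you acknowledge this but defer it to an unspecified ``direct symbolic computation''.

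The paper's proof is entirely explicit and sidesteps Morrison. It composes: (i) the $2$-isogeny $\psi_k:Y_k\to W_k$ with kernel generated by $(0,0)$; (ii) the reparametrization $z\mapsto\frac{z+1}{z-1}$ turning $\mathcal{W}_k$ into $\mathcal{Z}_k$, whose fibre configuration ($I_2^*$ at $z=0,\infty$, $I_4$ at $z=1$, two fibres of type $I_2$) matches the fibration $\mathscr{J}_6$ of \cite{Kuwata_Shioda}; (iii) the elliptic fibration \eqref{eq:Kummer} on $\Kum(E_k\times E_k)$ with elliptic parameter $u=x_1/x_2$, whose Weierstrass form \eqref{eq:twisting_model} is identified with the quadratic twist of $Z_k$ by $1+k^2$. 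The field $K(\sqrt{k^2+1})$ in the statement is precisely the field over which this twist is trivialized --- not, as you suggest, the field of rationality of the $2$-torsion of $E_k$. To repair your argument you would need to replace the Shioda--Inose step by this (or some other) explicit identification of a fibration on $\Kum(E_k\times E_k)$ with the $2$-isogenous surface; your treatment of the degenerate cases $k=0$ and $k^2=-1$ does agree with the paper.
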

\begin{proof}
Let $\psi_{k}:Y_{k}\rightarrow W_{k}$ denote the following $2$-isogeny of two elliptic curves over $K(z)$ with kernel spanned by the point $(0,0)$
\[\psi(x,y):(x,y)\mapsto \left(\frac{k^4 \left(z^2-1\right)^2}{x}-2 k^2 \left(z^2-1\right)+x+4 \left(z^2-1\right)^2,\frac{y \left(x^2-k^4 \left(z^2-1\right)^2\right)}{x^2}\right)\]
where 
\[W_{k}:Y^2=X^3+4 \left(z^2-1\right) \left(k^2-2 z^2+2\right)X^2-16\left(z^2-1\right)^3 \left(k^2-z^2+1\right)X.\]
The surface $\mathcal{W}_{k}$ is isomorphic to a surface $\mathcal{Z}_{k}$ via the isomorphism $(X,Y,z)\mapsto (X,Y,\frac{z+1}{z-1})$ and the generic fiber of the latter has a Weierstrass model
\[Z_{k}:Y^2=X^3+\frac{16 z \left(k^2 z^2-2 k^2 z+k^2-8 z\right)}{(z-1)^4}X^2-\frac{1024 z^3 \left(k^2 z^2-2 k^2 z+k^2-4 z\right)}{(z-1)^8}X.\]
The surface $\mathcal{Z}_{k}$ has singular fibers of the following types ($\sharp$):
\begin{align*}
I_2^*:\  & z=0,\infty,\\
I_4:\  & z=1,\\
I_2:\  & z=\frac{k^2+2\pm 2 \sqrt{k^2+1}}{k^2}.\\
\end{align*}
We identify this elliptic fibration with the fibration $\mathscr{J}_6$ from \cite[\S 2.4]{Kuwata_Shioda}. 

Let us assume for now that $k^2\neq -1$.
Let 
\begin{equation}\label{eq:E_k_curve}
E_k: y^2=x(k^2(1 + k^2)^3 + 2(1 + k^2)^2 x + x^2)
\end{equation}
be an elliptic curve. We have a natural elliptic fibration $(x_1,x_2,t)\mapsto t$ on the Kummer surface $\Kum(E_{k}\times E_{k})$
\[x_1(k^2(1 + k^2)^3 + 2(1 + k^2)^2 x_1 + x_1^2)t^2= x_2(k^2(1 + k^2)^3 + 2(1 + k^2)^2 x_2 + x_2^2).\]
We find an elliptic fibration with reduction types ($\sharp$) when we consider an elliptic parameter $u=\frac{x_1}{x_2}$. Elimination of the variable $x_1$ provides a model
\begin{equation}\label{eq:Kummer}
k^2 \left(k^2+1\right)^3 \left(t^2 u-1\right)+2 \left(k^2+1\right)^2 x_2 (t u-1) (t u+1)+x_2^2 \left(t^2 u^3-1\right)=0.
\end{equation}
Change of variables $(t,x_2)=\left(X,-\frac{\left(k^2+1\right) \left(k^2 u^2 X^2-k^2+u^2 X^2-Y-1\right)}{u^3 X^2-1}\right)$ produces a Weierstrass model
\[Y^2=(1+k^2)(1+ \left(u \left(k^2 (u-1)^2-2 u\right)\right)X^2+u^4 X^4)\]
This is a quadratic twist by $(1+k^2)$ of the quartic singular model of the elliptic curve $Z_{k}$. In~fact, the substitution
\[(X,Y)=\left(-\frac{2 y}{x \left(4 u^2-x\right)},\frac{\left(x-4 u^2\right)^2-2 k^2 (u-1)^2 u \left(2 u^2-x\right)}{x \left(4 u^2-x\right)}\right)\]
produces a model (after twisting by $1+k^2$)
\begin{equation}\label{eq:twisting_model}
y^2=x^3+u \left(k^2 u^2-2 k^2 u+k^2-8 u\right) x^2-4 u^3 \left(k^2 u^2-2 k^2 u+k^2-4 u\right)x.
\end{equation}
A linear change of variables $(x,y,u)=(X\cdot d^2,Y\cdot d^3,z)$ where $d=(z-1)^2/4$ provides an~isomorphism with the model $Z_{k}$.

Let $k^2=-1$. In this case, $Y_{k}$ is $2$-isogenous to a curve which is a generic fiber of the Kummer surface $\Kum(E\times E)$ where $E:y^2=x^3-x$. Calculations are analogous to the general case and so we skip them. A composition of the maps defined above provides the rational map $f$.
\end{proof}

For a given K3 surface $X$ over the complex numbers, we denote by $T_{X}$ the cup-product orthogonal complement in the Betti cohomology $H^{2}(X,\mathbb{Z})$ of the image $N$ of the N\'{e}ron-Severi group via the cocycle map $c_{Betti}:\NS(X)\rightarrow H^{2}(X,\mathbb{Z})$. By the Lefschetz (1,1)-theorem it follows that $N=H^{2}(X,\mathbb{Z})\cap H^{1,1}(X)$. When $X\rightarrow Y$ is a finite map between two K3 surfaces $X,Y$ which induces a non-zero map $H^{2,0}(Y)\rightarrow H^{2,0}(X)$, it follows that $T_{X}$ is Hodge isometric to $T_{Y}$. To check that the former map is non-zero it is enough to verify that a holomorphic two-differential on $Y$ pulls back to a non-zero differential on $X$, cf. \cite[Thm. 12.5]{Schutt_Shioda}. 

For a Kummer surface $\Kum(A)$ attached to an abelian variety $A$, the transcendental lattice $T_{A}\otimes\mathbb{Q}$ is isomorphic to $T_{\Kum(A)}\otimes\mathbb{Q}$ \cite[Prop.4.3]{Morrison_K3}. When $A=E\times E$ is a square of an elliptic curve, the group $T_{A}\otimes\mathbb{Q}$ can be identified with a subgroup of $\Sym^{2}(H^{1}(E,\mathbb{Q}))$ due to K\"{u}nneth formula and the identification $\NS(A)\cong \mathbb{Z}^2\oplus\Hom(E,E)$. When $E$ has no complex multiplication $T_{A}\otimes\mathbb{Q} = \Sym^{2}(H^{1}(E,\mathbb{Q}))$. By the comparison between Betti and $\ell$-adic cohomology we can transfer these results to the etale cohomology. 

The cocycle map induces a $\Gal(\overline{K}/K)$-equivariant injection of the N\'{e}ron-Severi group
\[c:\NS((X)_{\overline{K}})\otimes\mathbb{Q}_{\ell}\hookrightarrow H=H^{2}_{ét}((X)_{\overline{K}},\mathbb{Q}_{\ell}).\]
The comparison theorem identifies $T_{X}\otimes\mathbb{Q}_{\ell}$ with the complement of $im(c)$ in $H$, denoted by~$H^{tr}$.

For the K3 surface $X$ that has a correspondence with $A=E\times E$ over $K$, the group $H^{tr}$ is a Galois submodule in $\Sym^{2}(H^{1}_{ét}(E_{\overline{K}},\mathbb{Q}_{\ell}))$. The two groups are equal when $E$ has no complex multiplication, \cite[\S 12.2.4]{Schutt_Shioda}.

Let $X=\mathcal{Y}_{k}$ and $k\in K\subset\mathbb{C}$, $k\neq 0$.
It follows from the Tate algorithm and the discriminant formula for the generic fiber of the elliptic surface $\mathcal{Y}_{k}$ that we have the following bad fibers. For $k$ such that $k^2\neq -1$ we have
\begin{align*}
	I_1^*:\  & z=\pm 1\\
	I_8:\  & z=\infty\\
	I_1:\  & z=\pm \sqrt{k^2+1}.
\end{align*}
For $k^2=-1$ two fibers of type $I_1$ merge into one fiber above $z=0$ of type $I_2$.

The fibers of type $I_8$ and $I_1^*$ have all components defined over $K$. Let $\ell$ be a prime. The~image of $\NS((\mathcal{Y}_{k})_{\overline{K}})\otimes\mathbb{Q}_{\ell}$ via the cocycle map $c$ is a subspace $N\subset  H^{2}_{ét}((\mathcal{Y}_{k})_{\overline{K}},\mathbb{Q}_{\ell})$.
The subspace $V\subset N$ of the image of $c$ which is spanned by the components of reducible fibers, the image of the zero section and the general fiber has rank equal to at least $19$ due to \cite[Cor. 6.7]{Schutt_Shioda}. Since $\mathcal{Y}_{k}$ is a K3 surface it follows that $\dim_{\mathbb{Q}_{\ell}}H^{2}_{ét}((\mathcal{Y}_{k})_{\overline{K}},\mathbb{Q}_{\ell})=22$.  The subgroup $V$ is a trivial $\Gal(\overline{K}/K)$-representation in $H$.
\begin{proposition}\label{prop:Galois_correspondence}
	Let $K\subset\mathbb{C}$.
	Let $k\in K^{\times}$, $k^2\neq -1$ and $E=E_{k}$ be the elliptic curve \eqref{eq:E_k_curve} with $\End(E)=\mathbb{Z}$. Let $\ell$ be a prime. It follows that the $\rank \NS((\mathcal{Y}_{k})_{\overline{K}})=19$. The~$G$-representation $H$ is equal to $V\oplus T$, where $T= \Sym^2(H^1_{ét}(E)_{\overline{K}},\mathbb{Q}_{\ell})\otimes \chi$ and $\chi$ is a character associated with the extension $K(\sqrt{k^2+1})/K$.
\end{proposition}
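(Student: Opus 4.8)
The plan is to read off everything from the correspondence of Lemma \ref{lemma:Shioda_Inose_on_X_k} together with the hypothesis $\End(E)=\mathbb{Z}$, by playing the algebraic part $N=\im(c)$ against its transcendental complement $H^{tr}$ inside $H=H^{2}_{ét}((\mathcal{Y}_{k})_{\overline{K}},\mathbb{Q}_{\ell})$. Since $\mathcal{Y}_{k}=\widetilde{X}_{k}$ is a K3 surface by Lemma \ref{lemma:X_k_is_K3}, we have $\dim_{\mathbb{Q}_{\ell}}H=22$, so it is enough to pin down $H^{tr}$; this will deliver the Picard number and the Galois structure simultaneously.

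First I would compute $\rank V$ directly from the Shioda--Tate formula, using the list of bad fibres of $\mathcal{Y}_{k}$ recorded just before the statement: two fibres of type $I_1^{*}$ (at $z=\pm1$), one fibre of type $I_8$ (at $z=\infty$), and two irreducible fibres of type $I_1$ (at $z=\pm\sqrt{k^2+1}$). The non-identity components contribute $5+5+7=17$ to the trivial lattice, and adjoining the classes of the zero section and the general fibre gives $\rank V = 2+17 = 19$. On the other hand, because $\End(E)=\mathbb{Z}$, the transcendental part of $\Kum(E\times E)$ is the full $\Sym^{2}(H^{1}_{ét}(E_{\overline{K}},\mathbb{Q}_{\ell}))$, which is $3$-dimensional; transporting this through the correspondence $f$, which induces a Hodge isometry of transcendental lattices and hence identifies $H^{tr}$ with a $3$-dimensional space, yields $\dim H^{tr}=3$. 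Therefore $\rank\NS=\dim N = 22-3=19$. Since $V\subseteq N$ and both are $19$-dimensional we conclude $V=N$; as $V$ is a trivial $\Gal(\overline{K}/K)$-module, so is $N$, and $H=V\oplus H^{tr}$.

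It remains to identify $H^{tr}=T$ as a $\Gal(\overline{K}/K)$-module. The map $f$ of Lemma \ref{lemma:Shioda_Inose_on_X_k} is only defined over $L=K(\sqrt{k^2+1})$, so a priori the identification $H^{tr}\cong\Sym^{2}(H^{1}_{ét}(E_{\overline{K}},\mathbb{Q}_{\ell}))$ is only $\Gal(\overline{K}/L)$-equivariant. Restricted to $\Gal(\overline{K}/L)$ the two modules coincide, so over $K$ they differ by a character of $\Gal(L/K)$, which is either trivial or $\chi$. To show it is $\chi$ I would track the explicit construction: in that lemma the generic fibre $Z_{k}$ arises as the quadratic twist by $1+k^{2}$ of the natural elliptic fibration on $\Kum(E\times E)$, while $Y_{k}$ is $2$-isogenous to $Z_{k}$, so the transcendental parts of $\mathcal{Y}_{k}$ and $\mathcal{Z}_{k}$ agree after $\otimes\mathbb{Q}_{\ell}$. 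A constant quadratic twist of an elliptic surface by $d=1+k^{2}$ replaces the local system $R^{1}\pi_{*}\mathbb{Q}_{\ell}$ by its tensor with $\chi_{d}=\chi$; since $\chi$ is constant over the base it factors out of $H^{1}(\mathbb{P}^{1},R^{1}\pi_{*}\mathbb{Q}_{\ell})$, so the transcendental part is multiplied by a single copy of $\chi$, giving $T=\Sym^{2}(H^{1}_{ét}(E_{\overline{K}},\mathbb{Q}_{\ell}))\otimes\chi$.

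The main obstacle is this last identification of the character, and the subtle point is that the twist must \emph{survive} in the transcendental part. Twisting the \emph{curve} $E$ by $\chi$ and then forming $\Sym^{2}$ would produce $\chi^{2}=1$ and no twist at all; the content is therefore that the relevant twist is of the \emph{elliptic surface} by the base-constant $d$, which tensors the whole variation $H^{1}(\mathbb{P}^{1},R^{1}\pi_{*}\mathbb{Q}_{\ell})$, and hence $T$, by exactly one copy of $\chi$. Verifying that the intervening $2$-isogeny and the birational identifications of Lemma \ref{lemma:Shioda_Inose_on_X_k} introduce no further sign, for instance by comparing $\Tr(\Frob_{p})$ on both sides for a prime $p$ split in $L$, is the delicate computation that closes the argument; the rank assertion, by contrast, follows formally once $\dim H^{tr}=3$ has been established.
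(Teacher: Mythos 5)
Your proposal is correct and follows essentially the same route as the paper: both obtain $\rank\NS=19$ by transporting the $3$-dimensional transcendental part of $\Kum(E_k\times E_k)$ (using $\End(E)=\mathbb{Z}$) through the correspondence of Lemma \ref{lemma:Shioda_Inose_on_X_k}, and both read off the character $\chi$ from the explicit quadratic twist by $1+k^2$ in \eqref{eq:twisting_model}. Your extra remark that the twist is a \emph{base-constant} twist of the fibration (so it does not cancel in $\Sym^2$) is a useful elaboration of a point the paper leaves implicit, but it is not a different argument.
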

\begin{proof}
	Since there is a finite map between $\mathcal{Y}_{k}$ and $\Kum(E_k\times E_k)$ it follows from \cite[Thm. 12.2.5]{Schutt_Shioda} that $\rank \NS((\mathcal{Y}_{k})_{\overline{K}})=\rank \NS((\Kum(E_k\times E_k))_{\overline{K}})$. 
   Next, the formula \cite[Eq. 12.9]{Schutt_Shioda} implies that the latter group has rank $19$, concluding that $T=T_{\mathcal{Y}_{k}}\otimes\mathbb{Q}_{\ell}$ has rank $3$. 
	The discussion before the proposition implies that in that case $T$ is isomorphic as $\Gal(\overline{K}/K(\sqrt{k^2+1}))$-module with $S=\Sym^{2}(H^{1}_{ét}(E_{\overline{K}},\mathbb{Q}_{\ell}))$. By the formula \eqref{eq:twisting_model} $T$ is isomorphic to $S\otimes\chi$ as $\Gal(\overline{K}/K)$-modules for the quadratic character $\chi$ associated with $K(\sqrt{k^2+1})/K$.
\end{proof}
\begin{remark}
	Analogous statements were discussed in  \cite{Geemen_Top} and \cite{Naskrecki_hypergeom}.
\end{remark}

\begin{proposition}\label{prop:sq_m1_case}
	Let $K\subset\mathbb{C}$ and $-1\notin K$. Let $L=K(i)$, where $i^2=-1$. The elliptic surface $\mathcal{Y}_{i}$ is defined over $K$. Its satisfies $\rank\NS((\mathcal{Y}_{i})_{\overline{K}})=20$ and as  $\Gal(\overline{K}/K)$-module $\NS((\mathcal{Y}_{i})_{\overline{K}})$ is a direct sum of rank $19$ trivial representation and a rank $1$-module with basis $<e>$ such that $\sigma(e)=-e$ for the automorphism $\sigma(i)=-i$. The transcendental part of $H^{2}_{ét}((\mathcal{Y}_{k})_{\overline{K}},\mathbb{Q}_{\ell})$ has rank $2$ and is a proper submodule of $\Sym^{2}(H^{1}_{ét}(E_{\overline{K}},\mathbb{Q}_{\ell}))$ where $E:y^2=x^3-x$. 
\end{proposition}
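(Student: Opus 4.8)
The plan is to run the argument of Proposition~\ref{prop:Galois_correspondence} in the degenerate case $k^2=-1$, where Lemma~\ref{lemma:Shioda_Inose_on_X_k} supplies a finite correspondence between $\mathcal{Y}_i$ and $\Kum(E\times E)$ with $E:y^2=x^3-x$. The one genuinely new input is that $E$ now has complex multiplication by $\mathbb{Z}[i]$, the extra endomorphism $[i]\colon(x,y)\mapsto(-x,iy)$ being defined only over $L=K(i)$; this is what raises the Picard number by one and produces the anti-invariant class. First I would record that $\mathcal{Y}_i$ is defined over $K$, since the coefficients of \eqref{eq:Ykz_model} involve $k$ only through $k^2=-1\in K$, and that here $k^2+1=0$, so the correspondence of Lemma~\ref{lemma:Shioda_Inose_on_X_k} is already defined over $K$ (the twist by $\sqrt{k^2+1}$ degenerates).

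For the ranks, $\End(E_{\overline K})=\mathbb{Z}[i]$ has rank $2$, so $\NS((E\times E)_{\overline K})\otimes\mathbb{Q}$ has rank $4$ with basis $F_1=[E\times O]$, $F_2=[O\times E]$, $\Delta=[\Gamma_{\mathrm{id}}]$ and $\Gamma_i=[\Gamma_{[i]}]$. The Kummer formula \cite[Eq.~12.9]{Schutt_Shioda} then gives $\rank\NS(\Kum(E\times E))=16+4=20$, and \cite[Thm.~12.2.5]{Schutt_Shioda} forces $\rank\NS((\mathcal{Y}_i)_{\overline K})=20$; since $\mathcal{Y}_i$ is K3, the transcendental rank is $22-20=2$.

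The heart of the proof is the Galois action on the extra class. Of the four generators above, $F_1,F_2,\Delta$ are defined over $K$, while $\sigma\colon i\mapsto -i$ sends $\Gamma_{[i]}$ to $\Gamma_{[-i]}$. Using the intersection numbers $\Gamma_\phi\cdot F_1=\deg\phi$, $\Gamma_\phi\cdot F_2=1$, $\Gamma_\phi^2=0$, $\Gamma_\phi\cdot\Gamma_\psi=\deg(\phi-\psi)$ and $F_1^2=F_2^2=0$, $F_1\cdot F_2=1$, together with $\deg[\pm i]=\deg(\mathrm{id})=1$ and $\deg(1\pm i)=2$, I would solve the resulting linear system to obtain $\Gamma_{[-i]}=2F_1+2F_2-\Gamma_i$ in $\NS\otimes\mathbb{Q}$. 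Hence $e:=\Gamma_i-F_1-F_2$ satisfies $\sigma(e)=-e$, while $F_1,F_2,\Delta$ span a rank-$3$ trivial submodule. Because $E[2]=\{O,(0,0),(\pm1,0)\}\subset E(K)$, the sixteen exceptional classes of $\Kum(E\times E)$ are $\Gal(\overline K/K)$-invariant; together with $F_1,F_2,\Delta$ they account for a rank-$19$ trivial summand, and $e$ for the rank-$1$ summand with $\sigma(e)=-e$. Transporting this decomposition through the $K$-rational correspondence yields the asserted structure of $\NS((\mathcal{Y}_i)_{\overline K})$. For the transcendental part I would reuse the discussion preceding Proposition~\ref{prop:Galois_correspondence}: the correspondence embeds $T_{\mathcal{Y}_i}\otimes\mathbb{Q}_\ell$ into $\Sym^2(H^1_{ét}(E_{\overline K},\mathbb{Q}_\ell))$, and the embedding is now proper because $\rank T=2<3$. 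Indeed over $L$ one has $H^1_{ét}(E)=\psi\oplus\bar\psi$ with $\psi\bar\psi=\mathbb{Q}_\ell(-1)$, so $\Sym^2(H^1)=\psi^2\oplus\psi\bar\psi\oplus\bar\psi^2$; the summand $\psi\bar\psi$ is the algebraic CM class (the image of $e$ under the cocycle map), and the complementary rank-$2$ module $\psi^2\oplus\bar\psi^2$ is $T_{\mathcal{Y}_i}\otimes\mathbb{Q}_\ell$.

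The main obstacle is precisely the anti-invariance of the new $(1,1)$-class: everything else is a verbatim transcription of the no-CM case, but one must confirm that the single algebraic class created by the complex multiplication is genuinely conjugate-antisymmetric rather than $K$-rational. The identity $\sigma(\Gamma_{[i]})=2F_1+2F_2-\Gamma_{[i]}$ is what settles this, and the only point requiring care in passing from $E\times E$ to $\mathcal{Y}_i$ is that the correspondence of Lemma~\ref{lemma:Shioda_Inose_on_X_k} respects the $\Gal(\overline K/K)$-action on the relevant classes, which holds because every map in that lemma is defined over $K$ once $k^2+1=0$.
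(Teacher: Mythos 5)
Your argument is correct, but it locates the anti-invariant algebraic class on the opposite side of the correspondence from where the paper does. The paper's proof never touches $E\times E$ for the Néron--Severi statement: it works directly with the elliptic fibration on $\mathcal{Y}_i$, observes that the two $I_1$ fibers have merged into a \emph{non-split} $I_2$ fiber at $z=0$ whose components $F_1,F_2$ are conjugate over $K(i)$ and satisfy $F_1+F_2=F$, and takes $e=2F_1-F=F_1-F_2$ as the sign-representation generator; the other $19$ Shioda--Tate generators (zero section, general fiber, components of the two $I_1^*$ fibers and the split $I_8$) are $K$-rational, and $19+1=20$ forces $\rho=20$. You instead produce the extra class on the abelian surface as $\Gamma_{[i]}-F_1-F_2$, verify $\sigma(\Gamma_{[i]})=2F_1+2F_2-\Gamma_{[i]}$ by the intersection-number computation (which checks out), and transport the decomposition through $f^*$. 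Your route is more conceptual — it explains the jump in Picard number as coming from the CM endomorphism and yields the explicit identification $T_{\mathcal{Y}_i}\otimes\mathbb{Q}_\ell\cong\psi^2\oplus\bar\psi^2$ inside $\Sym^2(H^1_{ét}(E_{\overline K},\mathbb{Q}_\ell))=\psi^2\oplus\psi\bar\psi\oplus\bar\psi^2$, which is sharper than the paper's bare ``rank $2$ proper submodule.'' The price is that every step now leans on the $k^2=-1$ case of Lemma \ref{lemma:Shioda_Inose_on_X_k} being defined over $K$, precisely the case whose computations that lemma skips, and on $f^*$ being a Galois-equivariant injection between two rank-$20$ lattices (hence an isomorphism after $\otimes\,\mathbb{Q}$); the paper's fibration argument needs the correspondence only for the transcendental statement, so its Néron--Severi claim is independent of those omitted calculations. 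Both proofs are sound; yours trades a small amount of extra dependence on the correspondence for a more transparent explanation of where $e$ comes from.
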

\begin{proof}
	The elliptic surface $\mathcal{Y}_{i}$ has a fiber of type $I_2$ at $z=0$. This fiber is isomorphic over $K$ is a union of two components $F_1,F_2$ defined over $K(i)$, which satisfy the relation $F_1+F_2=F$ in the $\NS((\mathcal{Y}_{i})_{\overline{K}})$, where $F$ is a general fiber. 
	Hence, the representation $\NS((\mathcal{Y}_{i})_{\overline{K}})\otimes\mathbb{Q}$ has a~basis $e_1,\ldots,e_{18}=O, e_{19}=F, e=2F_1-F$. On that basis the action of $\sigma$ is $\sigma(e_i)=e_i$ and $\sigma(e)=-e$, since the elements $e_i$ are defined over $K$ and $\sigma(F_1)=F_2$. 
	
	In this case, the passage from the Kummer model \eqref{eq:Kummer} to the Weierstrass model does not require any twisting, and the argument of Proposition \ref{prop:Galois_correspondence} proves that the transcendental part of $H^{2}_{ét}((\mathcal{Y}_{k})_{\overline{K}},\mathbb{Q}_{\ell})$ is a rank $2$ submodule in $\Sym^{2}(H^{1}_{ét}(E_{\overline{K}},\mathbb{Q}_{\ell}))$.
\end{proof}

\begin{remark}
	We point out that the correspondence between $\widetilde{X}_{k}$ and $E_{k}\times E_{k}$ that we have constructed is not the Shioda-Inose structure \cite{Morrison_K3}. In fact, one can prove that the discriminants of the transcendental lattices of $\widetilde{X}_{k}$ and $\mathcal{W}_{k}$ do not satisfy the necessary condition $\disc T(\widetilde{X}_{k})[2]=\disc T(\mathcal{W}_{k})$.
\end{remark}

\subsection{Fibration with K3 surfaces}\label{sec:k}

In this section we derive formula \eqref{eq:1} (see  Lemma \ref{lemma:pts}) by counting points on fibers $X_k$  and evaluating the sum $$\#(X\backslash X_{k=0})(\Fp)=\sum_{\substack{k \in \Fp\\ k\ne 0}} \#X_{k}(\Fp).$$

We denote by $\tilde{Y}_{k}$ a minimal smooth projective model of $$Y_{k}: Y^2 = X^3 + (4z^4 + (-2k^2 - 8)z^2 + (2k^2 + 4))X^2 + k^4(z^2-1)^2 X,$$ which is an elliptic surface $\pi:\tilde{Y}_{k}\rightarrow \mathbb{P}^{1}$ with projection $\pi$ obtained from the natural projection $(X,Y,z)\mapsto z$.
The following proposition relates $\#X_k(\Fp)$ to $\#\tilde{Y}_k(\Fp)$.

An elliptic surface $N\rightarrow\mathbb{P}^{1}$ which is defined over $\mathbb{Q}$ is said to have a good reduction at~a~rational prime $p$ when there exists a model $\mathcal{N}\rightarrow\Spec\mathbb{Z}_{(p)}=\{g,s\}$ which is smooth projective of relative dimension $2$ and such that the generic fiber $\mathcal{N}_{g}$ is isomorphic with $N$, there exists a~factorization 
\[\mathcal{N}\rightarrow\mathbb{P}^{1}_{\mathbb{Z}_{(p)}}\rightarrow\Spec\mathbb{Z}_{(p)}\]
and the induced morphisms $\mathcal{N}_{x}\rightarrow\mathbb{P}^{1}$ for $x\in\Spec\mathbb{Z}_{(p)}$ define an elliptic surface over $\mathbb{Q}$ and $\mathbb{F}_{p}$, respectively. 

For a polynomial $f\in\mathbb{Z}[t]$, let $\rad(f)$ denote a radical of that polynomial and $\disc(f)$, the~discriminant of $f$.

We say that a Weierstrass model $y^2+a_1 x y+a_3 y = x^3+a_2 x^2+a_4 x+a_6$ of an elliptic curve $N$ over $\mathbb{Q}(t)$ is globally minimal if $a_i\in\mathbb{Z}[t]$, the model is minimal at all finite places and is also minimal at infinity \cite[\S 5.10]{Schutt_Shioda}.
\begin{lemma}\label{lemma:good_reduction}
	 Let $N$ be a globally minimal model of an elliptic curve over $\mathbb{Q}(t)$. Let $\Delta$ be the~discriminant of $N$ and let $j=\frac{f}{g}$ be its j-invariant with $f,g$ coprime and in $\mathbb{Z}[t]$. Let $p>5$ be a rational prime such that for every polynomial $h\in \{\rad(a_i), \rad(\Delta),\rad(f),\rad(g)\}$ we have $p\nmid\disc(h)$ and the modulo $p$ reduction of the equation of $N$ defines an elliptic curve over $\mathbb{F}_{p}(t)$ with Weierstrass coefficients $\widetilde{a}_{i}\in\mathbb{F}_{p}[t]$ defining a globally minimal model over $\mathbb{F}_{p}(t)$, then $p$ is a prime of good reduction for the elliptic surface attached to $N$.
\end{lemma}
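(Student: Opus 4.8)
The plan is to show that under the stated congruence hypotheses on $p$, the elliptic surface attached to $N$ acquires a smooth proper model over $\Spec\mathbb{Z}_{(p)}$ whose special fiber is again an elliptic surface in the sense demanded by the definition of good reduction preceding the lemma. The heart of the matter is that ``good reduction'' for an elliptic surface is really a statement about the singular fibers: the global minimal Weierstrass model $N$ degenerates, over $\overline{\mathbb{Q}(t)}$ (resp. $\overline{\mathbb{F}_p(t)}$), into a prescribed list of Kodaira fibers located at the zeros of $\Delta$ and at infinity, and the reduction is good precisely when this configuration of fibers is preserved under reduction modulo $p$ — no fibers collide, no fiber changes Kodaira type, and the total space remains smooth after the canonical resolution.

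\medskip

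\noindent\textbf{First}, I would record what the four radical conditions buy us. The hypothesis $p\nmid\disc(\rad(\Delta))$ guarantees that $\Delta$ and its reduction $\widetilde{\Delta}$ have the same number of distinct roots over the respective algebraic closures; that is, no two zeros of the discriminant coalesce modulo $p$, so the singular fibers at finite places do not merge. Similarly $p\nmid\disc(\rad(f))$ and $p\nmid\disc(\rad(g))$ ensure that the locus where the $j$-invariant is $0$ or $\infty$ (the potentially additive/multiplicative dichotomy that governs Kodaira type through the Tate algorithm) is preserved, and $p\nmid\disc(\rad(a_i))$ controls the vanishing orders of the Weierstrass coefficients themselves. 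I would argue that, together with the explicit assumption that the reduced coefficients $\widetilde{a}_i$ still define a globally minimal model over $\mathbb{F}_p(t)$, these conditions force each step of the Tate algorithm \cite[IV \S 9]{Silverman_Advanced} to produce the \emph{same} output in characteristic $p$ as in characteristic $0$. Since the Tate algorithm branches only on vanishing orders of the $a_i$ and of $\Delta$ and on the reducibility/splitting of certain auxiliary quadratic and cubic forms, and since $p>5$ rules out the exceptional wild-ramification behavior of the algorithm in characteristics $2,3,5$, the valuations read off at each place of $\mathbb{F}_p(t)$ agree with those over $\mathbb{Q}(t)$.

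\medskip

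\noindent\textbf{Next}, I would pass from the statement about fibers to the statement about the total space. Over $\mathbb{Z}_{(p)}$ one takes the globally minimal Weierstrass model, which defines a fibered surface $\mathcal{W}\to\mathbb{P}^1_{\mathbb{Z}_{(p)}}$ whose only non-smooth points lie above the zeros of $\Delta$; by the preceding paragraph these are the same vanishing loci, with the same multiplicities, over the generic and special points of $\Spec\mathbb{Z}_{(p)}$. The minimal smooth elliptic surface $\mathcal{N}$ is obtained from $\mathcal{W}$ by the canonical resolution of these singularities, and because the singularity type at each fiber (an $ADE$ configuration determined by the Kodaira type) is constant across the two characteristics, the resolution can be carried out simultaneously over $\mathbb{Z}_{(p)}$: the resolution is relative, smooth of relative dimension $2$, and commutes with base change to both residue characteristics. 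This produces exactly the factorization $\mathcal{N}\to\mathbb{P}^1_{\mathbb{Z}_{(p)}}\to\Spec\mathbb{Z}_{(p)}$ with $\mathcal{N}_g\cong N$ and with $\mathcal{N}_s$ an elliptic surface over $\mathbb{F}_p$, which is precisely the definition of good reduction.

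\medskip

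\noindent\textbf{The main obstacle} I anticipate is the behavior at the place $t=\infty$, which the Tate algorithm does not treat on the same footing as finite places and which the radicals $\rad(\cdot)$, being polynomials in $\ZZ[t]$, do not directly see. Global minimality at infinity is a separate condition (the hypothesis explicitly includes it both over $\mathbb{Q}(t)$ and over $\mathbb{F}_p(t)$), and I would need to verify that the reduction preserves the Weierstrass data in the chart at infinity — concretely, that reducing modulo $p$ and changing to the coordinate $s=1/t$ commute, so that the degrees (not merely the finite vanishing orders) of $\Delta$ and the $a_i$ are unchanged. This is where the assumption $p\nmid\disc(g)$ and the coprimality of $f,g$ do real work: they prevent the $j$-invariant from dropping degree modulo $p$, which is exactly the phenomenon that would alter the fiber at infinity. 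Once minimality and the fiber type at infinity are confirmed to be stable, the smoothness of $\mathcal{N}$ over $\Spec\mathbb{Z}_{(p)}$ follows from the constancy of the full fiber configuration, and the lemma is proved.
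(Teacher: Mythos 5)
Your proposal is correct and follows essentially the same route as the paper: the hypotheses force the Tate algorithm to run identically over $\mathbb{Q}(t)$ and $\mathbb{F}_p(t)$, giving matching Kodaira fiber configurations, and then the relative Weierstrass model over $\mathbb{P}^1_{\mathbb{Z}_{(p)}}$, having rational double points of the same type in both characteristics, admits a simultaneous resolution yielding the smooth model $\mathcal{N}\to\Spec\mathbb{Z}_{(p)}$. The paper compresses this into a citation of \cite{Naskrecki_thesis} and of Cossec--Dolgachev for the double-point/resolution step; your added care at $t=\infty$ is a sensible elaboration of the ``globally minimal'' hypothesis rather than a departure.
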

\begin{proof}
	This is a translation of a theorem proved by one of the authors in \cite[Tw. 2.2.12]{Naskrecki_thesis}.
	
	The conditions imposed in the theorem imply that two elliptic curves $N/\mathbb{Q}(t)$ and $\tilde{N}/\mathbb{F}_{p}(t)$ satisfy the same steps in Tate's algorithm, hence the associated elliptic curves over $\mathbb{Q}$ and $\mathbb{F}_{p}$, respectively have the same configuration of singular fibers at places which correspond to each other under reduction modulo $p$.
	
	The Weierastrass model $W_{N}\rightarrow\mathbb{P}^{1}_{\mathbb{Z}_{(p)}}$ treated as a relative surface has only isolated double points as singularities \cite[\S 0.2]{Cossec_Dolgachev} in the fibers above both the generic and special fiber.
	In~both characteristics, $0$ and $p$, the types of singularities are the same \cite[Prop. 0.2.6]{Cossec_Dolgachev} and hence the Tate's algorithm provides a simultaneous resolution of singularities on $W_{N}\rightarrow\mathbb{P}^{1}_{\mathbb{Z}_{(p)}}$ leading to a smooth model $\mathcal{N}\rightarrow\Spec\mathbb{Z}_{(p)}$.
\end{proof}

\begin{corollary}\label{cor:good_reduction_cohomology}
	Elliptic curve $Y_{k}$ over $\mathbb{Q}(z)$ has good reduction at every prime $p>5$ and~for~every $k\in\mathbb{Z}$ which satisfies $p\nmid k$. In particular, elliptic surfaces $\mathcal{Y}=\mathcal{Y}_{k}$ over $\mathbb{Q}$ and $\widetilde{\mathcal{Y}}=\widetilde{Y}_{k}$ over $\mathbb{F}_{p}$ satisfy the equality as $D_{p}=\Gal(\overline{\mathbb{Q}}_{p}/\overline{\mathbb{Q}}_{p})$-modules
	\[H^{2}_{ét}((\mathcal{Y})_{\overline{\mathbb{Q}}},\mathbb{Q}_{\ell})=H^{2}_{ét}((\widetilde{\mathcal{Y}})_{\overline{\mathbb{F}}_{p}},\mathbb{Q}_{\ell})\]
	for every prime $\ell\neq p$.
\end{corollary}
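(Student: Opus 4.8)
The plan is to reduce the cohomological comparison to good reduction of the elliptic surface $\mathcal{Y}_k$ and then to establish good reduction by exhibiting a smooth proper model over $\ZZ_{(p)}$. Indeed, a smooth proper $\ZZ_{(p)}$-scheme has unramified $\ell$-adic cohomology, so the asserted isomorphism of $D_p$-modules in Corollary~\ref{cor:good_reduction_cohomology} will follow from smooth proper base change once good reduction of $\mathcal{Y}_k$ is known. As in the proof of Lemma~\ref{lemma:good_reduction}, I would start from the globally minimal Weierstrass model $\mathcal{W}_k$ of $Y_k$ over $\ZZ_{(p)}[z]$ given by \eqref{eq:Ykz_model}, view it as a relative threefold over $\ZZ_{(p)}$, and aim to show that its only singularities are rational double points; a simultaneous resolution (Cossec--Dolgachev, as cited there) then yields the smooth model whose special fibre is the K3 surface $\widetilde{\mathcal{Y}}_k$ of Lemma~\ref{lemma:X_k_is_K3}.

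For the primes at which no two singular fibres collide this is exactly Lemma~\ref{lemma:good_reduction}. The bad fibres of $\mathcal{Y}_k$ lie over $z=\pm1$ (type $I_1^*$), $z=\infty$ (type $I_8$) and $z=\pm\sqrt{k^2+1}$ (type $I_1$), and the orders of $a_2=2(z^2-1)(2z^2-k^2-2)$, $a_4=k^4(z^2-1)^2$ and $\Delta=-16k^8(z^2-1)^7(k^2-z^2+1)$ at these places are manifestly preserved modulo any $p>5$ with $p\nmid k(k^2+1)$; hence Tate's algorithm passes through identical steps in characteristics $0$ and $p$ and good reduction follows with the configuration $2I_1^*+I_8+2I_1$ intact. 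I would also observe that the extra divisibility conditions appearing in Lemma~\ref{lemma:good_reduction} through $\disc(\rad(a_2))$ and the numerator of $j$ (for example primes dividing $k^2+2$) are harmless here: they record a degeneration over a \emph{smooth} fibre -- e.g. $a_2$ acquiring a double zero at $z=0$ when $p\mid k^2+2$, where $\Delta(0)=16k^8(k^2+1)\not\equiv0$ -- so the fibre types and the resolution are unchanged and these primes are also of good reduction.

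The essential case, beyond the reach of the Tate-step comparison, is $p\mid k^2+1$. Here the two $I_1$ fibres merge: reducing $\Delta$ gives a double zero at $z=0$, and a one-line computation turns the fibre there into $Y^2=X(X+1)^2$, an $I_2$, while $z=\pm1$ and $z=\infty$ are untouched, so the special fibre is again a K3 ($7+7+8+2=24$). The new $I_2$ contributes a $(-2)$-curve absent in characteristic $0$, which is precisely why the naive model fails to be smooth at the collision point, and Lemma~\ref{lemma:good_reduction} cannot be invoked. To decide good reduction I would use the correspondence of Proposition~\ref{prop:Galois_correspondence}: it identifies, as $D_p$-modules, the transcendental part $H^{\mathrm{tr}}$ of $H^2_{ét}((\mathcal{Y}_k)_{\overline{K}},\QQ_\ell)$ with $\Sym^2 H^1_{ét}(E_{k,\overline{K}},\QQ_\ell)\otimes\chi$, where $E_k$ is the curve \eqref{eq:E_k_curve} and $\chi$ is the quadratic character of $K(\sqrt{k^2+1})/K$. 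Good reduction of $\mathcal{Y}_k$ forces $H^{\mathrm{tr}}$ to be unramified at $p$, and conversely the explicit resolution above will build the model once this local representation is controlled. Now at $p\mid k^2+1$ one computes $j(E_k)\equiv1728$ with additive reduction (type $III$, $I_0^*$ or $III^*$ according to $v_p(k^2+1)\bmod4$), so tame inertia acts on $H^1(E_k)$ through the order-$4$ automorphism group of the $j=1728$ curve, and the twist by $\chi$ -- ramified exactly when $v_p(k^2+1)$ is odd -- is the mechanism available to neutralise the induced tame character inside $\Sym^2$.

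The main obstacle is thus the collision analysis at $p\mid k^2+1$: establishing Corollary~\ref{cor:good_reduction_cohomology} in the full generality stated amounts to proving that $\Sym^2 H^1_{ét}(E_{k,\overline{K}})\otimes\chi$ is unramified at every such $p$, equivalently that the compound Du~Val singularity of $\mathcal{W}_k$ at the collision point $(X,Y,z)=(-1,0,0)$ over $\ZZ_{(p)}$ is a rational double point admitting a simultaneous resolution with smooth special fibre. This is a purely local problem at $z=0$, $p=0$: I would write out $Y^2=X^3+a_2(z)X^2+a_4(z)X$ with $k^2-z^2+1$ meeting the ideal $(p)$, read off the threefold singularity type from the leading terms, and match the tame inertia action on $\Sym^2 H^1(E_k)$ against $\chi$. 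The delicate interaction between the additive reduction type of $E_k$ and the ramification of $\chi$ at these primes -- exactly the information that the Tate-algorithm comparison for the generic primes discards -- is the phenomenon that a complete proof must confront, and it is where the earlier argument, which simply omitted the divisors of $k^2+1$, gives out.
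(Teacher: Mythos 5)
Your first two paragraphs are, in substance, the paper's own proof: good reduction via Lemma \ref{lemma:good_reduction} checked against the globally minimal model (the paper's ``direct calculation''), then smooth proper base change for the $D_p$-module identification. Your supplementary observation that the primes dividing $\disc(\rad(a_2))$ or the numerator of $j$ (e.g.\ $p\mid k^2+2$) only degenerate $a_2$ over a fibre where $\Delta$ is a unit, so the Tate steps at the singular places are unchanged, is a careful point the paper leaves implicit. You are also right that $p\mid k^2+1$ is genuinely outside the reach of Lemma \ref{lemma:good_reduction}: the two $I_1$ fibres collide at $z=0$ into an $I_2$, and $k^2+1$ divides $\disc(\rad(\Delta))$, so the lemma's hypotheses fail. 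But at this point your proposal stops --- you pose the collision case as an obstacle and sketch a programme without carrying it out, so as a proof of the statement as written it is incomplete.

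The deeper issue is that the programme cannot succeed, and finishing your own third-paragraph computation shows why. Write $v=v_p(k^2+1)$. For $v$ odd the minimal model of $E_k$ from \eqref{eq:E_k_curve} has additive reduction of type $III^*$ ($v\equiv 1 \bmod 4$) or $III$ ($v\equiv 3\bmod 4$), so tame inertia acts on $H^1$ through a cyclic group of order $4$ with eigenvalues $\pm i$, hence on $\Sym^2 H^1$ with eigenvalues $(-1,1,-1)$; since $\chi$ is ramified precisely when $v$ is odd, the twist has inertia eigenvalues $(1,-1,1)$ and invariants of dimension $2<3$. Thus by Proposition \ref{prop:Galois_correspondence} the transcendental part, hence $H^2_{ét}$, is \emph{ramified} at $p$, and smooth proper base change then excludes good reduction: for instance $k=5$, $p=13$ (so $k^2+1=26$, type $III^*$, $\chi$ ramified at $13$, and $E_5$ non-CM since $j=64(k^2+4)^3/k^4\notin\ZZ$) is a counterexample to the corollary as literally stated. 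So the collision case is not a gap to be filled but a hypothesis to be added: the statement and the cited proof are correct exactly under $p\nmid k(k^2+1)$ (for even $v$ the representation is unramified, with $E_k$ of type $I_0^*$ or of good reduction and $\chi$ unramified, but this is never needed), and that restricted version is all the paper ever uses --- in Theorem \ref{thm:Yk} the integer lift $k_0$ of $k\in\Fp$ with $k^2\neq -1$ automatically satisfies $p\nmid k_0^2+1$, while the case $k^2=-1$ in $\Fp$ is handled by the surface $\mathcal{Y}_i$ over $\QQ$ of Proposition \ref{prop:sq_m1_case}, whose $I_2$ fibre at $z=0$ exists already in characteristic $0$, so that Lemma \ref{lemma:good_reduction} applies to it directly.
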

\begin{proof}
	The good reduction for primes $p>5$ follows from Lemma \ref{lemma:good_reduction} and a direct calculation with the globally minimal model of $Y_{k}$.  The equality of the cohomology groups as representations of the decomposition group at $p$ follows as a standard fact for $\ell$-adic cohomology of a smooth scheme $\mathcal{Y}\rightarrow\Spec\mathbb{Z}_{(p)}$.
\end{proof}

\begin{proposition} \label{prop:comparison} Let $p$ be an odd prime and $k\in \Fp$. If $k^2 \notin \{-1,0\}$ then $$\#X_k(\Fp)=\#\tilde{Y}_k(\Fp)-24p+6.$$ If $k^2 = -1$ then $\#X_k(\Fp)=\#\tilde{Y}_k(\Fp)-25p+6$.
\end{proposition}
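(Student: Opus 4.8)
The plan is to compare $X_k$ and $\tilde Y_k$ fibre by fibre along the common projection to the $z$-line, leaning on the degree-one correspondence $\phi_{k,z}\colon X_{k,z}\to \tilde Y_{k,z}$ already exploited in the proof of Lemma \ref{lemma:pts}. Let $G=\{z\in\Fp : z^2\notin\{1,k^2+1\}\}$ denote the "generic" base points. The fibre of $X_k$ over $z=\pm1$ is empty (the equation degenerates to $0=k^2\neq0$), and $z=\infty$ is not a point of the affine surface, so
\[
\#X_k(\Fp)=\sum_{z\in G}\big(\#\tilde Y_{k,z}(\Fp)-4\big)+\sum_{z^2=k^2+1}\#X_{k,z}(\Fp),
\]
where over $G$ I have used the cited identity $\#X_{k,z}(\Fp)=\#\tilde Y_{k,z}(\Fp)-4$, valid because $\phi_{k,z}$ maps $X_{k,z}$ onto $\tilde Y_{k,z}\setminus\{P_1,P_2,P_3,\mathcal O\}$ with the three removed points distinct and $\Fp$-rational.

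Next I would decompose $\#\tilde Y_k(\Fp)$ over $\mathbb{P}^1(\Fp)$ using the singular fibres listed before Proposition \ref{prop:Galois_correspondence}: a split $I_8$ at $z=\infty$, two split $I_1^*$ at $z=\pm1$, and $I_1$ fibres at $z=\pm\sqrt{k^2+1}$ (merging, when $k^2=-1$, into one $I_2$ at $z=0$). Since all components of the $I_8$ and $I_1^*$ fibres are defined over $\Fp$, counting the corresponding split Kodaira configurations of transversally meeting $\mathbb{P}^1$'s gives $8p$ for $I_8$ and $6(p+1)-5=6p+1$ for each $I_1^*$ (six components, five double points). Thus
\[
\#\tilde Y_k(\Fp)=\sum_{z\in G}\#\tilde Y_{k,z}(\Fp)+8p+2(6p+1)+\sum_{z^2=k^2+1}\#\big(\pi^{-1}(z)\big)(\Fp).
\]
On the $X_k$ side the only values of $z$ outside $G$ (besides the empty $z^2=1$ fibres) are those with $z^2=k^2+1$; there $k^2/(z^2-1)=1$, so $X_{k,z}$ is the genus-zero curve $(x^2-1)(y^2-1)=1$, whose count $p-3-\phi_p(-1)$ is exactly the one obtained in the proof of Lemma \ref{lemma:pts}.

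The one genuine computation is the node type of the $I_1$ (resp.\ $I_2$) fibres. Specializing the Weierstrass equation \eqref{eq:Ykz_model} at $z^2=k^2+1$ yields $Y^2=X(X+k^4)^2$, a nodal cubic whose node at $(-k^4,0)$ has branch slopes $\pm ik^2$; hence it is split exactly when $-1$ is a square, giving $\#\pi^{-1}(z)(\Fp)=p+1-\phi_p(-1)$ (and, after resolution, a split $I_2$ with $2p$ points when $k^2=-1$, where $-1$ is automatically a square). Subtracting the two displays, the key point is the fibrewise difference
\[
\#X_{k,z}(\Fp)-\#\big(\pi^{-1}(z)\big)(\Fp)=\big(p-3-\phi_p(-1)\big)-\big(p+1-\phi_p(-1)\big)=-4,
\]
so the dependence on $\phi_p(-1)$ disappears. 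Writing $\#\{z:z^2=k^2+1\}=p-2-|G|$ makes $|G|$ cancel as well, and the bookkeeping collapses to $\#X_k(\Fp)-\#\tilde Y_k(\Fp)=-4|G|-8p-2(6p+1)-4(p-2-|G|)=-24p+6$. The case $k^2=-1$ is identical with the single split $I_2$ ($2p$ points) in place of the two $I_1$'s and $|G|=p-3$, yielding $-25p+6$.

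The main obstacle I anticipate is precisely this cancellation bookkeeping: one must verify that the $\phi_p(-1)$-contributions from the $I_1$/$I_2$ fibres of $\tilde Y_k$ and from the exceptional genus-zero fibres of $X_k$ cancel, and that the a priori unknown number of solutions of $z^2=k^2+1$ (two, one, or none) cancels against the size of $G$. Getting the split/non-split analysis of $Y^2=X(X+k^4)^2$ right, and recording the correct split Kodaira point counts $8p$, $6p+1$, $2p$, is the error-prone part; by contrast the generic-fibre comparison is immediate from Lemma \ref{lemma:pts}. I would also double-check that, for every odd $p$, the discriminant $-16k^8(z^2-1)^7(k^2-z^2+1)$ still factors with the stated fibre configuration (the only degeneration being the $I_1\to I_2$ merge at $k^2=-1$), so that the argument is valid for all odd primes rather than only $p>5$.
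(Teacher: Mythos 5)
Your proposal is correct and follows essentially the same route as the paper: a fibre-by-fibre comparison along the $z$-line using the degree-one map $\phi_{k,z}$ on generic fibres, the counts $8p$ and $6p+1$ for the split $I_8$ and $I_1^*$ fibres of $\tilde Y_k$, and the special fibres at $z^2=k^2+1$ where $X_{k,z}$ becomes $(x^2-1)(y^2-1)=1$ with $p-3-\phi_p(-1)$ points while the nodal cubic $Y^2=X(X+k^4)^2$ has $p+1-\phi_p(-1)$ points, so that the $\phi_p(-1)$-terms and the count of square roots of $k^2+1$ cancel exactly as you describe. The only cosmetic discrepancy is that you call the $I_2$ fibre at $k^2=-1$ split where the paper calls it non-split, but since $\phi_p(-1)=1$ in that case both conventions give $2p$ points and the final answer $-25p+6$ is unaffected.
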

\begin{proof}
	When $z\neq \pm 1$ and $z^2\neq k^2+1$, then $\# X_{k,z}(\Fp) = \#\tilde{Y}_{k,z}-4$ since the fiber $\tilde{Y}_{k,z}$ is an~elliptic curve and the model $Y_{k,z}$ has a point at infinity removed.
	
	When $z=\pm 1$ it follows from the equation of $X_{k,z}$ that $\# X_{k,z}(\Fp) = 0$, while $\#\tilde{Y}_{k,\pm 1}(\Fp) = 6p+1$ because the reduction type at $z=\pm 1$ of the elliptic curve $\tilde{Y}_{k,z}$ is $I_{1}^{*}$ and all the components of the Kodaira fiber are defined over $\Fp$.
	
	At $z=\infty$ we have a split reduction type $I_{8}$ for $\tilde{Y}_{k,z}$, hence $\#\tilde{Y}_{k,z}(\Fp)=8p$. 
	
	For each $z$ such that $z^2=k^2+1\neq 0$ we have a reduction type $I_1$, hence $\#\tilde{Y}_{k,z}(\Fp)=p+1-\phi_{p}(-1)$. On the other hand  for such $z$ the curve $X_{k,z}$ is $x^2 y^2-x^2-y^2= 0$ and $\# X_{k,z}(\Fp)=p-3-\phi_{p}(-1)$.
	
	When $k^2=-1$, then at $z=0$ we have a non-split reduction of type $I_2$ and $\# \tilde{Y}_{k,0}(\Fp) =2p+1-(-1/p)$ while $\# X_{k,0}(\Fp) = p-3-\phi_{p}(-1)$.
	\end{proof}

 Using the correspondence from Section \ref{sec:shioda} attached to $X_k$ (or on $Y_k$ directly) we can derive explicit formula for~$\#\tilde{Y}_k(\Fp)$.  
   
\begin{remark}\label{rem:trace_formula_special_E}
	Let $p$ be an odd prime. Let $\lambda(p)$ denote $p+1-\#E(\mathbb{F}_{p})$ where $E: y^2=x^3-x$.
	Notice that 
 $$\lambda(p)^2=\left\{\begin{array}{ll} 4b^2, & \textrm{if }p \equiv 1 \pmod{4}; p=a^2+b^2 \textrm{ with } b \textrm{ odd},\\
		0, & \textrm{if }p \equiv 3 \pmod{4},
	\end{array}\right.$$
	which follows from \cite[Chap.18, Thm. 5]{Ireland_Rosen}.
\end{remark}

\begin{theorem} \label{thm:Yk} Let $p$ be an odd prime, and $k\in \Fp$. If $k^2 \notin \{-1,0\}$ then
$$\#\tilde{Y}_k(\Fp) = 1+19p+p^2+\phi_p(k^2+1)(a_{k,p}^2-p),$$
where $E_k: y^2=x(k^2(1 + k^2)^3 + 2(1 + k^2)^2 x + x^2)$ and $a_{k,p}= p+1-\#E_{k}(\Fp)$. Moreover, if~$k^2 = -1$ then
$$\#\tilde{Y}_k(\Fp) = 1+(20-\phi_p(-1))p+p^2+(\lambda(p)^2-p).$$
\end{theorem}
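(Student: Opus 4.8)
The plan is to count points through the Grothendieck--Lefschetz trace formula, reading off the Frobenius trace on middle cohomology from the Galois decomposition of Section \ref{sec:shioda}. Since $\tilde{Y}_k$ is a K3 surface by Lemma \ref{lemma:X_k_is_K3}, its Betti numbers are $b_0=b_4=1$, $b_1=b_3=0$ and $b_2=22$; the odd cohomology vanishes and the fixed-point formula collapses to
\[
\#\tilde{Y}_k(\Fp) = 1 + \Tr\!\bigl(\Frob_p \mid H^2_{ét}((\tilde{Y}_k)_{\overline{\F}_p},\mathbb{Q}_\ell)\bigr) + p^2 .
\]
The whole problem is thus to evaluate the middle trace. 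To bring the characteristic-zero results of Section \ref{sec:shioda} to bear, I would first lift $k\in\Fp^\times$ to an integer and apply Corollary \ref{cor:good_reduction_cohomology}, which identifies $H^2_{ét}((\tilde{Y}_k)_{\overline{\F}_p},\mathbb{Q}_\ell)$ with $H^2_{ét}((\mathcal{Y}_k)_{\overline{\mathbb{Q}}},\mathbb{Q}_\ell)$ as a module under the decomposition group $D_p$, so that the action of $\Frob_p$ matches on both sides.

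Assume first $k^2\neq -1$. By Proposition \ref{prop:Galois_correspondence} the representation $H^2$ splits as $V\oplus T$, where $V$ is a trivial Galois module of rank $19$ generated by the classes of the zero section, a general fiber and the components of the reducible fibers, all defined over $\Fp$. These are algebraic classes, so under the cycle class map into the Tate-twisted $H^2(1)$ each is $\Frob_p$-invariant and therefore contributes $p$ to the untwisted trace; hence $\Tr(\Frob_p\mid V)=19p$. For the transcendental part Proposition \ref{prop:Galois_correspondence} gives $T=\Sym^2 H^1_{ét}((E_k)_{\overline{\mathbb{Q}}},\mathbb{Q}_\ell)\otimes\chi$, where $\chi$ is the quadratic character of $\Fp(\sqrt{k^2+1})/\Fp$ and thus $\chi(\Frob_p)=\phi_p(k^2+1)$. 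Writing $\alpha,\beta$ for the Frobenius eigenvalues on $H^1_{ét}(E_k)$, so that $\alpha+\beta=a_{k,p}$ and $\alpha\beta=p$, the symmetric square has eigenvalues $\alpha^2,\alpha\beta,\beta^2$ and trace $(\alpha+\beta)^2-\alpha\beta=a_{k,p}^2-p$; multiplying by $\chi(\Frob_p)$ gives $\Tr(\Frob_p\mid T)=\phi_p(k^2+1)(a_{k,p}^2-p)$. Adding the two contributions to $1+p^2$ produces the first formula. If $E_k$ happened to have complex multiplication the transcendental rank would drop, but the eigenvalue $\alpha\beta=p$ would simply move into the algebraic part, leaving the total trace — and hence the formula — intact.

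For $k^2=-1$, which forces $p\equiv 1\pmod 4$, I would instead invoke Proposition \ref{prop:sq_m1_case} with $E:y^2=x^3-x$. Here $\rank\NS=20$: the rank-$19$ trivial part again gives $19p$, and the extra class $e=2F_1-F$ from the type $I_2$ fiber at $z=0$ is defined over $\Fp$ — since $-1$ is a square mod $p$ the two components are not interchanged by Frobenius, consistent with the split count $\#\tilde{Y}_{k,0}(\Fp)=2p$ recorded in Proposition \ref{prop:comparison} — so it contributes a further $p$, for $20p$ in total on the algebraic part. The transcendental part is now the rank-$2$ submodule of $\Sym^2 H^1_{ét}(E)$ carrying the eigenvalues $\alpha^2,\beta^2$, of trace $\alpha^2+\beta^2=\lambda(p)^2-2p$ with $\lambda(p)$ as in Remark \ref{rem:trace_formula_special_E}. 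Summing gives $\#\tilde{Y}_k(\Fp)=1+p^2+20p+(\lambda(p)^2-2p)$, and rewriting $20p-2p$ as $(20-\phi_p(-1))p-p$ using $\phi_p(-1)=1$ yields exactly the stated $1+(20-\phi_p(-1))p+p^2+(\lambda(p)^2-p)$.

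The principal obstacle is the characteristic-zero-to-$p$ transfer: Corollary \ref{cor:good_reduction_cohomology} only furnishes the $D_p$-equivariant comparison for $p>5$ (and $p\nmid k$), so the small primes $p=3,5$ — and any finitely many exceptional values of $k$ — must be settled separately, most directly by counting points on the fibration $\pi:\tilde{Y}_k\to\mathbb{P}^1$ fiber by fiber with Proposition \ref{prop:quadratic_char_summation}, in the style of Lemma \ref{lemma:pts}. The other delicate point is the consistent bookkeeping of the Tate twist (divisor classes contribute $p$, not $1$) together with the compatibility of the twist by $\chi$ with the reduction of $E_k$, so that the eigenvalues $\alpha,\beta$ really are those of $\Frob_p$ acting on $E_k/\Fp$ and $a_{k,p}$ is the honest trace of the reduced curve.
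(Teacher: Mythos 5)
Your proposal is correct and follows essentially the same route as the paper: the Lefschetz trace formula on the K3 surface $\tilde{Y}_k$, the characteristic-zero comparison of Corollary \ref{cor:good_reduction_cohomology}, and the Galois decompositions of Propositions \ref{prop:Galois_correspondence} and \ref{prop:sq_m1_case} together with Remark \ref{rem:trace_formula_special_E}, with $p=3,5$ checked by direct computation. The only cosmetic difference is that the paper dispatches the CM caveat by choosing the integer lift $k_0>p$, which forces $\End(E_{k_0})=\mathbb{Z}$ so that Proposition \ref{prop:Galois_correspondence} applies verbatim, rather than arguing as you do that a CM eigenvalue would merely migrate into the algebraic part without changing the total trace.
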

\begin{proof}
	For primes $p=3$ and $p=5$ we verify the claims by a direct calculation. Suppose that $p>5$ and $k_{0}$ be a lift of $k\in\mathbb{F}_{p}$, $k^2\neq -1$ to integers which satisfies $k_0>p$. Under such assumption the elliptic curve $E_{k_{0}}$ cannot have complex multiplication since the list of CM $j$-invariants is finite and explicit \cite[App. A \S 3]{Silverman_Advanced}. 
	The conclusion follows from Proposition \ref{prop:Galois_correspondence} and the trace formula applied to Corollary \ref{cor:good_reduction_cohomology}.
	
	Let $k^2=-1$ in $\mathbb{F}_{p}$. Let $\mathcal{Y}_{i}$ be an elliptic surface over $\mathbb{Q}$ as defined in Proposition \ref{prop:sq_m1_case} and let $E_{i}$ denote the elliptic curve $y^2=x^3-x$. Our claim follows from Proposition \ref{prop:sq_m1_case}, Corollary \ref{cor:good_reduction_cohomology} and  Remark \ref{rem:trace_formula_special_E}.
\end{proof}

Proposition \ref{prop:comparison} and Theorem \ref{thm:Yk} now reduce a derivation of the formula \eqref{eq:1} to the~evaluation of the sum $$\displaystyle\sum_{\substack{k \in \Fp,\\ k^2\notin \{-1,0\}}}\phi_p(k^2+1)a_{k,p}^2=-\sum_{\substack{k \in \Fp,\\ k^2\notin \{-1,0\}}}a_{k,p}^2+2\sum_{\substack{k \in \Fp,\\ k^2\notin \{-1,0\},\\ k^2+1=\square}}a_{k,p}^2,$$
where $x=\square$ means that $x$ is a square in $\mathbb{F}_{p}$.   The main idea is to interpret two sums on~the~right-hand side as traces of Frobenii $\Frob_p\in \GalQ$ of certain Galois representations attached to the elliptic surfaces with generic fibers $E_k: y^2=x(k^2(1 + k^2)^3 + 2(1 + k^2)^2 x + x^2)$ and $F_k: y^2 = (x-(k-1)^2)(x-(k+1)^2)x$. 

To motivate the curve $F_k$, we start with the following observation. Let $p$ be an odd prime and for $k \in \Fp \backslash \{-1,0,1\}$ let $b_{k,p}=p+1-\#F_k(\Fp)$.

\begin{proposition}\label{lem:1}
	If $p$ is an odd prime, then
	$$2\lambda(p)^2+2\sum_{\substack{k \in \Fp\\ k^2\notin \{-1,0\}\\ k^2+1=\square}}a_{k,p}^2=\sum_{\substack{k\in \Fp\\ k \notin \{-1,0,1\}}} b_{k,p}^2.$$
\end{proposition}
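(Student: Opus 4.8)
The plan is to produce an explicit reparametrisation that turns each curve $E_k$ appearing on the left into a quadratic twist of some $F_t$, so that the squared traces $a_{k,p}^2$ and $b_{t,p}^2$ agree, and then to match the two index sets by a two-to-one map. The decisive simplification is that only the squares $a_{k,p}^2$ and $b_{t,p}^2$ occur: every quadratic-twist character $\phi_p(\cdot)$ that shows up is immediately squared to $1$, so I never have to track twisting signs or invoke isogenies.

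First I would normalise $E_k$ under the hypothesis that $k^2+1$ is a nonzero square, writing $k^2+1=m^2$ with $m\in\Fp^\times$; note $k^2\notin\{-1,0\}$ forces $m\neq0,\pm1$. The $\Fp$-rational substitution $x=m^2X,\ y=m^3Y$ (legitimate precisely because $m\in\Fp$) carries $E_k$ to $Y^2=X\bigl(X+m(m-1)\bigr)\bigl(X+m(m+1)\bigr)$, with no change in point count. On the other side $F_t:y^2=x\bigl(x-(t-1)^2\bigr)\bigl(x-(t+1)^2\bigr)$ is a genuine elliptic curve exactly when $t\notin\{-1,0,1\}$, since these are the only parameters making two of the three roots $0,(t-1)^2,(t+1)^2$ collide. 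The key is then the substitution
\[t=m+k,\qquad\text{equivalently}\qquad k=\frac{t^2-1}{2t},\quad m=\frac{t^2+1}{2t},\]
which is nothing but the standard parametrisation of the conic $m^2-k^2=1$. A short computation gives $m(m-1)=c\,(t-1)^2$ and $m(m+1)=c\,(t+1)^2$ with $c=\tfrac{t^2+1}{4t^2}\in\Fp^\times$; hence, after the further $\Fp$-defined sign flip $X\mapsto-\xi$ and scaling $\xi\mapsto c\zeta$, the normalised $E_k$ and the curve $F_t$ carry the same cubic up to the overall scalar $-c$. Therefore $E_k$ is the quadratic twist of $F_t$ by $-c$, so $a_{k,p}=\phi_p(-c)\,b_{t,p}$ and in particular $a_{k,p}^2=b_{t,p}^2$.

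Next I would carry out the bookkeeping for the reindexing. I claim the map $\Phi(t)=\tfrac{t^2-1}{2t}$ sends $\Fp\setminus\{-1,0,1,\pm i\}$ onto $\{k:\ k^2+1=\square,\ k^2\notin\{-1,0\}\}$, is two-to-one with fibre $\{t,-1/t\}$ (the two sign choices of $m$; indeed $t\cdot(-1/t)=k^2-m^2=-1$), and satisfies $F_{-1/t}\cong F_t$ over $\Fp$ via the square scaling $x\mapsto x/t^2$. Consequently the two preimages of each admissible $k$ contribute the same value $b_{t,p}^2=a_{k,p}^2$, which produces exactly $2\sum_{k^2+1=\square,\,k^2\notin\{-1,0\}}a_{k,p}^2$. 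It remains to account for the parameters $t=\pm i$ omitted from the domain (they occur precisely when $p\equiv1\pmod4$, and map to the forbidden locus $k^2=-1$): here $F_{\pm i}:y^2=x^3+4x$ is the quadratic twist of $y^2=x^3-x$ by $2i$, so $b_{\pm i,p}^2=\lambda(p)^2$ and the two of them contribute $2\lambda(p)^2$. When $p\equiv3\pmod4$ there is no such $t$ and $\lambda(p)=0$, so the term is consistently zero. Summing the two contributions gives $\sum_{t\notin\{-1,0,1\}}b_{t,p}^2=2\lambda(p)^2+2\sum a_{k,p}^2$, which is the assertion.

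The main obstacle is the second step: finding the conic parametrisation $t=m+k$ and verifying, through the explicit Weierstrass manipulations, that $E_k$ and $F_t$ are genuinely quadratic twists of one another. Once the identity $a_{k,p}^2=b_{t,p}^2$ is established, the remaining work—surjectivity of $\Phi$, the size-two fibres $\{t,-1/t\}$, the isomorphism $F_{-1/t}\cong F_t$, and the boundary evaluation at $t=\pm i$—is routine. The whole argument is clean exactly because passing to squares annihilates all the characters $\phi_p(\cdot)$, removing any need for delicate sign or isogeny analysis.
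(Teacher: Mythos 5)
Your argument is correct and is essentially the paper's proof: both rest on the degree-two map $t\mapsto\frac{t^2-1}{2t}$ (the paper uses $\frac{1-t^2}{2t}$, immaterial since $E_k=E_{-k}$) parametrizing the conic $m^2-k^2=1$, the fixed-point-free involution $t\mapsto -1/t$ on the fibres, the identification of $E_k$ with a quadratic twist of $F_t$ so that the squared traces agree, and the boundary fibres $t=\pm i$ with $F_{\pm i}:y^2=x^3+4x$ supplying the $2\lambda(p)^2$. The only difference is presentational: the paper verifies the twist relation once over $\QQ(k)$ via the pullback curve $H_k$ and its model, whereas you verify it fibrewise over $\Fp$ using the rational square root $m$ of $k^2+1$.
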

\begin{proof}
	Let $H_{k}$ denote the elliptic curve obtained from $E_k$ by the pullback through a quadratic map $\phi:k\mapsto \frac{1-k^2}{2k}$. The curve $H_k$ is a quadratic twist by $-(k^2+1)$ of the curve $F_{k}$. Indeed, we have the following models of those curves over $\mathbb{Q}(k)$
	\begin{align*}
	E_{k}:\  y^2& =x^3+2x^2+\frac{k^2}{k^2+1}x,\\
	H_{k}:\ y^2&= x^3+2x^2+\frac{\left(k^2-1\right)^2}{\left(k^2+1\right)^2}x,\\
	F_{k}:\  y^2& =x^3-2(k^2+1)x^2+ \left(k^2-1\right)^2x.
	\end{align*}
	Let $S=\{k\in\mathbb{F}_{p}: k(k^2-1)(k^2+1)\neq 0\}$. Notice that we have an involution $\iota:S\rightarrow S$ with $\iota(k)=-\frac{1}{k}$ without a fixed point on $S$. Hence, there exists two sets $B$, $\tilde{B}$ such that $B\cap \tilde{B} = \emptyset$ and $S=B\cup \tilde{B}$. It follows that $\phi(k)=\phi(\iota(k))$, hence $\phi(B)=\phi(\tilde{B})$. 
	
	Claim: $\phi(B) = \{k\in\mathbb{F}_{p}:k^2\notin\{-1,0\}, k^2+1=\square\}$. Indeed, for an element $s$ we have $\phi(s)^2\in \{-1,0\}$ if and only if $s\in\{\pm 1, \pm \sqrt{-1}\}$ but $\{\pm 1, \pm \sqrt{-1}\}\cap B=\emptyset$. If $k^2+1=r^2$ for some non-zero $r$, then there exists an element $s\in B$ such that $k=\frac{1-s^2}{2s}=\phi(s)$ and $r=\frac{1+s^2}{2s}$.
	
	Let 
	\[\mathcal{S} =\sum_{\substack{k\in \Fp\\ k \notin \{-1,0,1\}}} b_{k,p}^2= \sum_{k \in \{\pm \sqrt{-1}\}}b_{k,p}^2+\sum_{k\in S} b_{k,p}^2\]
	
	We have that $F_{\pm\sqrt{-1}}: y^2=x^3+4x$, so $b_{\pm\sqrt{-1},p}^2= \lambda(p)^2$ since the curves $F_{\pm\sqrt{-1}}$ are quadratic twists of $\tilde{E}_{1728}$. Finally, notice that for $s\in S$ we have the equalities
	\[b_{s,p}^2= a_p(F_s)^2 =a_p(H_s)^2 = a_p(E_{\phi(s)})^2=a_{\phi(s),p}^2,\]
	\[b_{\iota(s),p}^2=a_{\phi(\iota(s)),p}^2=a_{\phi(s),p}^2 \]
	and so
	\[\mathcal{S}-2\lambda(p)^2=\sum_{s\in B}b_{s,p}^2+\sum_{s\in \tilde{B}}b_{s,p}^2=\sum_{s\in B}b_{s,p}^2+\sum_{s\in B}b_{\iota(s),p}^2=2\sum_{s\in B}a_{\phi(s),p}^2=2\sum_{\substack{k \in \Fp\\ k^2\notin \{-1,0\}\\ k^2+1=\square}}a_{k,p}^2.\]
\end{proof}	
\subsection{Compatible families of $\ell$-adic Galois representations of $\GalQ$}\label{sec:galois}
Let $h_1:\mathcal{E}\rightarrow\mathbb{P}^{1}$ and $h_2:\mathcal{F}\rightarrow\mathbb{P}^{1}$ denote two  elliptic surfaces with generic fibers $E_{k}$ and $F_{k}$, respectively. We~will associate to each elliptic surface a compatible family of $\ell$-adic Galois representations of $\GalQ$ as follows.

Denote by $h_1':\mathcal{E}\rightarrow\mathbb{P}^{1}_1$ and $h_2':\mathcal{F}\rightarrow\mathbb{P}^{1}_2$ the restrictions of elliptic surfaces $h_1$ and $h_2$ to~the~complements $\mathbb{P}^{1}_{i}=\mathbb{P}^{1}\setminus\{t\in\mathbb{P}^{1}: h_{i}^{-1}(t)\textrm{ is singular}\}$.

For $j=1,2$, a prime $\ell$, and a positive integer $m$, we obtain a sheaf
\[
\mathcal{F}_\ell^j=R^1 h_j'{_*{\QQ_{\ell}}}
\]
on $\mathbb{P}^{1}_j$, and also a sheaf $i_*\textrm{Sym}^{m}\mathcal{F}_\ell$ on
$\mathbb{P}^{1}_j$ (here $\QQ_\ell$ is the constant sheaf on the elliptic surface $h_j$, $R^1$ is derived functor and $i:\mathbb{P}^{1}_j\rightarrow \mathbb{P}^{1}$ the inclusion). The action of $\GalQ$ on the $\QQ_\ell$-space
\[
W_{m,\ell}^j = H^1_{ét}(\mathbb{P}^{1}_j\otimes \overline{\QQ}, i_*\textrm{Sym}^{m}\mathcal{F}_\ell^j)
\]
defines an $\ell$-adic representation $\rho_{j,\ell}^m$ which is pure of weight $m+1$.

It follows from Lemma \ref{lemma:good_reduction} and a direct computation with globally minimal models of $\mathcal{E}$ and $\mathcal{F}$ that representations  $\rho_{j,\ell}^m$ are unramified for $p>5$. We denote by $Frob_p\in \GalQ$ a~geometric Frobenius at $p$. Using the following well known result, for $p>5$, we can express the trace of $\rho_{j,\ell}^m(\Frob_p)$ in terms of $\#E_k(\Fp)$ or $\#F_k(\Fp)$.

\begin{theorem}
\label{thm:trace}
Let $p>5$ be a prime. For $j=1,2$ and any positive integer $m$ the following are true:
\begin{itemize}
\item[(1)]  We have that
  \[
  Trace(Frob_p|W_{m,\ell}^j)=-\sum_{t\in \mathbb{P}^{1}_j(\Fp) } Trace(Frob_p|(i_*\textrm{Sym}^{m}\mathcal{F}_\ell^j)_t).
  \]
\item[(2)] If the fiber $E^j_t := h_j^{-1}(t)$ (i.e. $E^j_t$ is either equal to $E_t$ or $F_t$) is smooth, then
  \[
  Trace(Frob_p|(i_*\textrm{Sym}^{1}\mathcal{F}_\ell)_t)=Trace(Frob_p|H^1(E^j_t,
  \QQ_\ell))=p+1-\#E^j_t(\mathbb{F}_p).
  \]
	Furthermore,
	$$
	Trace(Frob_p|(i_*\textrm{Sym}^{2}\mathcal{F}_\ell)_t)=Trace(Frob_p|(i_*\textrm{Sym}^{1}\mathcal{F}_\ell^j)_t)^2-p.
	$$
\item[(3)] If the fiber $E^j_t := h_j^{-1}(t)$ is singular, then
   $Trace(Frob_p|(i_*\textrm{Sym}^{2}\mathcal{F}_\ell)_t)=1$ if the fiber is multiplicative or potentially multiplicative.
\item[(4)] If $p \equiv 1 \pmod{4}$ and $t=\pm \sqrt{-1} \in \Fp$, then $Trace(Frob_p|(i_*\textrm{Sym}^{2}\mathcal{F}_\ell^1)_t)=p$.
\end{itemize}
\end{theorem}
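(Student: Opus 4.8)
The plan is to prove the four assertions separately: (1) is a global Grothendieck--Lefschetz computation, while (2)--(4) are local evaluations of the Frobenius trace on the stalks of $i_*\Sym^m\mathcal{F}_\ell^j$. For (1), I would apply the Grothendieck--Lefschetz fixed-point formula to the middle-extension sheaf $\mathcal{G}=i_*\Sym^m\mathcal{F}_\ell^j$ on $\mathbb{P}^{1}$, using the good reduction of $\mathcal{E}$ and $\mathcal{F}$ at $p>5$ (Corollary \ref{cor:good_reduction_cohomology} together with Lemma \ref{lemma:good_reduction}) to pass between $\overline{\mathbb{Q}}$- and $\overline{\mathbb{F}}_p$-cohomology and to compute $\Frob_p$ geometrically. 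The formula reads
$$\sum_{i=0}^{2}(-1)^i\,\mathrm{Trace}(\Frob_p\mid H^i(\mathbb{P}^{1}_{\overline{\mathbb{F}}_p},\mathcal{G}))=\sum_{t}\mathrm{Trace}(\Frob_p\mid\mathcal{G}_t),$$
so the content is the vanishing of the two outer terms. Since $h_j$ is non-isotrivial, the geometric monodromy of $\mathcal{F}_\ell^j$ is Zariski-dense in $\mathrm{SL}_2$; hence for $m\geq 1$ the representation $\Sym^m$ of the standard representation is irreducible and non-trivial, so $\Sym^m\mathcal{F}_\ell^j$ has neither a trivial sub nor a trivial quotient under $\pi_1(\mathbb{P}^{1}_j)$, forcing $H^0=H^2=0$. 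The identity then collapses to $\mathrm{Trace}(\Frob_p\mid W^j_{m,\ell})=-\sum_t\mathrm{Trace}(\Frob_p\mid\mathcal{G}_t)$, the sum running over the $\Fp$-points of the base, and the individual stalk traces are precisely what (2)--(4) evaluate.

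For (2), proper and smooth base change identify $(\Sym^1\mathcal{F}_\ell)_t$ with $H^1(E^j_t,\mathbb{Q}_\ell)$, whose geometric Frobenius trace is $p+1-\#E^j_t(\Fp)$ by the Weil bounds for curves. Writing $\alpha,\beta$ for the two Frobenius eigenvalues on this rank-$2$ space, the determinant is the cyclotomic character, so $\alpha\beta=p$; the eigenvalues on $\Sym^2$ are $\alpha^2,\alpha\beta,\beta^2$, whence $\mathrm{Trace}=\alpha^2+\alpha\beta+\beta^2=(\alpha+\beta)^2-\alpha\beta=a_t^2-p$, as claimed.

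For (3) and (4) I would read off the inertia invariants $(\Sym^2\mathcal{F}_\ell)_t=(\Sym^2 V)^{I_t}$ from the local monodromy on $V=H^1$, whose type is already pinned down by the Tate-algorithm computations in Lemma \ref{lemma:X_k_is_K3} and Proposition \ref{prop:comparison}. At a multiplicative or potentially multiplicative fibre the monodromy is $\psi\otimes U$ with $\psi$ a (possibly trivial) quadratic character and $U$ unipotent; since $\psi^2=1$, $\Sym^2 V$ carries a purely unipotent inertia action whose invariants are the one-dimensional line $\langle e_1^2\rangle$, where $e_1$ spans the invariant line of $V$ on which $\Frob_p$ acts by $\pm 1$. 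Squaring erases the sign, so the trace is $1$. For (4), at $t=\pm\sqrt{-1}$ the fibre of $\mathcal{E}$ has additive, potentially good reduction of type $III^*$, where inertia acts through a cyclic group of order $4$ with eigenvalues $\pm i$ on $V$, so the $\Sym^2$-invariant line is the eigenvalue-$1$ space $\langle e_1e_2\rangle$. The hypothesis $p\equiv 1\pmod 4$ ensures both that $\pm\sqrt{-1}\in\Fp$ and that $\Frob_p$, which conjugates tame inertia by the $p$-power map, fixes each inertia-eigenline; hence $\Frob_p$ is diagonal on $V$ and acts on $e_1e_2$ by $\alpha\beta=\det=p$, giving trace $p$.

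The main obstacle I expect is the bad-fibre bookkeeping in (3) and (4): one must determine the exact Frobenius eigenvalue on the one-dimensional inertia-invariant line of $\Sym^2 V$, which requires combining the local monodromy type with the way $\Frob_p$ normalizes inertia. Fixing the normalization of $\Frob_p$ (geometric versus arithmetic) and the attendant Tate twists is where sign errors and spurious factors of $p$ are easiest to introduce, and it is also the point where the hypotheses $p>5$ and $p\equiv 1\pmod 4$ must be used with care.
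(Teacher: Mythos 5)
Your proposal is correct and follows essentially the same route as the paper: parts (1)--(3), which the paper simply outsources to \cite[Thm.~11]{Dujella_Kazalicki_ANT}, are proved by you with the standard Grothendieck--Lefschetz/local-monodromy arguments, and your treatment of (4) (inertia of order $4$ with eigenvalues $\pm i$ at the $III^*$ fibre, Frobenius commuting with inertia when $p\equiv 1\pmod 4$ and hence acting on the invariant line $\langle e_1e_2\rangle$ by $\det=p$) is precisely the computation the paper carries out following Scholl.
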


\begin{proof}
Points (1)-(3) follow from \cite[Thm. 11.]{Dujella_Kazalicki_ANT}.
We proceed to the proof of point (4).

In order to calculate the trace of $Frob_p|(i_*\textrm{Sym}^{2}\mathcal{F}_\ell^1)_t$ at $t=\pm\sqrt{-1}\in\mathbb{F}_p$, we follow 3.7 of \cite{Sch2}. 
Let $K=\mathbb{F}_{p}(T)$ be the~function field of rational function in $T$. Let $v$ be the~discrete valuation of $K$ corresponding to $T-t$, and $K_v$ the completion. 
Let $G_v$ be the absolute Galois group $\textrm{Gal}(K_v^\textrm{sep}/K_v)$, $I_v$ the inertia group, and $F_v$ a geometric Frobenius. 
Let $H_v$ denote $H^1(E^1\otimes K_v^\textrm{sep}, \QQ_\ell)$. The space $H_v$ is a $G_v$-module and $$Trace(Frob_p|(i_*\textrm{Sym}^2\mathcal{F}_\ell^2)_t ) = Trace(F_v| (Sym^2 H_v)^{I_v}).$$  

First we note that elliptic surface $\mathcal{E}$ has a reduction of type $III^*$ at fibers $t^2=-1$ which is potentially good. We use the fact that for primes $p>5$ the elliptic surface in characteristic $p$ has the same reduction types as the one in characteristic $0$, cf. Lemma \ref{lemma:good_reduction}. So over the local field $K=\mathbb{F}_{p}((v=t-\alpha))$ for $\alpha^2=-1$, $\alpha\in\mathbb{F}_{p}$ we can check that the base change of $E$ to the~field $L=K(\sqrt{\sqrt{v}-v})$ has good reduction. It follows from \cite[Prop. 4.1]{Silverman_Arithmetic} that the inertia group $I_{\overline{L}/L}$ acts trivially on the Tate module $T_{\ell}(E)$. 
The Galois group $\Gal(L/K)$ is cyclic of order $4$. One can check that the inertia group $I_{L/K}$ has order $4$ since the inertia order equals the~ramification degree above the point $[0:1]$ (totally ramified) in the map $f:C\rightarrow \mathbb{P}^{1}$, where $C:Y^2=XZ-X^2$ is a conic and the map $f([X:Y:Z])=[X^2:Z^2]$. 

It follows that the image of inertia subgroup $I_v$ in $\GL(H_v)$ is a cyclic group of order $4$. Since $\Lambda^2 H_v = \QQ_\ell(-1)$ is unramified, there is a basis $\{X,Y\}$ of $H_v\otimes \overline{\QQ_\ell}$ on which $I_v$ acts as $\pm i$. Hence, $(Sym^2 H_v)^{I_v}$ is one-dimensional vector space spanned by $XY$.

Since $p \equiv 1 \pmod{4}$, we have that $\sqrt{-1} \in \Fp$, hence the image of $G_v$ in $\GL(H_v)$ is commutative. In particular, $F_v(X)=\beta X$ and $F_v(Y)=\overline{\beta}Y$, where $\beta$ is an algebraic integer with $\beta\overline{\beta}=p$. It follows that $F_v(XY)=pXY$, and the claim follows.
\end{proof}
\begin{corollary} \label{cor:1}
For a prime $p>5$, we have that
\begin{itemize}
\item[a)]
\[
Trace(\rho_{1,\ell}^2(\Frob_p))  =
 p^2-3p-2\phi_p(-1)p-2-\sum_{\substack{k\in \Fp\\ k^2 \notin \{ -1, 0\}}} a_{k,p}^2,
\]
\item[b)]
\[
 Trace(\rho_{2,\ell}^2(\Frob_p))=
 p^2-3p-4-\sum_{\substack{k\in \Fp\\ k \notin \{-1,0,1\}}} b_{k,p}^2.
\]
\end{itemize}
\end{corollary}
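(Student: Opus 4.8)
The plan is to deduce both identities from the point-count interpretation of the trace furnished by Theorem \ref{thm:trace}. By part (1) of that theorem, for $j=1,2$ we have
\[
\Tr(\rho_{j,\ell}^2(\Frob_p)) = -\sum_{t\in\mathbb{P}^{1}(\Fp)}\Tr\bigl(\Frob_p\mid (i_*\textrm{Sym}^{2}\mathcal{F}_\ell^j)_t\bigr),
\]
the stalk of $i_*\textrm{Sym}^{2}\mathcal{F}_\ell^j$ at a singular $t$ being the inertia invariants computed in parts (3)--(4). So the whole computation reduces to evaluating the local stalk-trace at every $\Fp$-point of the base and splitting according to whether the fibre over $t$ is smooth or singular. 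On the smooth locus part (2) gives $\Tr(\Frob_p\mid(i_*\textrm{Sym}^{2}\mathcal{F}_\ell^j)_t)=a_p(E^j_t)^2-p$, which is $a_{k,p}^2-p$ for $j=1$ and $b_{k,p}^2-p$ for $j=2$; hence the smooth part of $-\sum_t\Tr$ equals $-\sum a_{k,p}^2+p\cdot\#\{\text{smooth }t\}$ (resp. with $b$). The remaining task is to identify the singular fibres, read off their local traces, and reconcile the counts with the stated constants. All of this is legitimate for $p>5$, where Lemma \ref{lemma:good_reduction} and the preceding discussion guarantee that $\rho^2_{j,\ell}$ is unramified and the Kodaira types agree with those in characteristic $0$.

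For part (a) I would first catalogue the bad fibres of $\mathcal{E}$. Its generic fibre $E_k$ has discriminant $\Delta=64k^4(1+k^2)^9$, so the singular fibres sit over $k=0$, over $k=\pm\sqrt{-1}$ (present in $\Fp$ precisely when $\phi_p(-1)=1$), and over $k=\infty$. As recorded in the discussion following \eqref{eq:m2p}, the fibre at $k=0$ is of type $I_4$ and those at $k=\pm\sqrt{-1}$ are of type $III^*$ with reduction $y^2=x^3$; a coordinate change $k=1/s$, $x=X/s^4$, $y=Y/s^6$ shows the fibre at infinity degenerates to $Y^2=X(X+1)^2$ and is of type $I_2$. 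The total Euler number is $4+9+9+2=24$, consistent with $\mathcal{E}$ being a K3 surface. The $I_4$ and $I_2$ fibres are multiplicative, so part (3) gives local trace $1$ at each; the two fibres at $k=\pm\sqrt{-1}$ are additive of potentially good reduction, so part (3) does not apply and part (4) instead gives local trace $p$ at each. With $\#\{k\in\Fp:k^2\notin\{-1,0\}\}=p-2-\phi_p(-1)$, the singular contribution to $-\sum_t\Tr$ is $-1-1-(1+\phi_p(-1))p$, and adding the smooth contribution $-\sum a_{k,p}^2+p(p-2-\phi_p(-1))$ collapses to $p^2-3p-2\phi_p(-1)p-2-\sum a_{k,p}^2$, which is (a).

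For part (b) the same scheme applies but every singular fibre is multiplicative. The generic fibre $F_k$ has discriminant $\Delta=256k^2(k-1)^4(k+1)^4$, giving singular fibres of type $I_2$ at $k=0$, type $I_4$ at $k=\pm1$, and (via $k=1/s$, reducing to $Y^2=X(X-1)^2$) type $I_2$ at $k=\infty$; the Euler number is $2+4+4+2=12$, so $\mathcal{F}$ is rational. Each of these four fibres contributes local trace $1$ by part (3), while the $p-3$ smooth base points $k\in\Fp\setminus\{-1,0,1\}$ contribute $-\sum b_{k,p}^2+p(p-3)$. The total is $p^2-3p-4-\sum b_{k,p}^2$, which is (b).

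The one genuinely non-routine point is the behaviour at the additive fibres $k=\pm\sqrt{-1}$ in part (a): because these have potentially good (not multiplicative) reduction, the naive value $1$ from part (3) is wrong and one must use the $III^*$-specific value $p$ established in part (4). This is exactly the source of the $-2\phi_p(-1)p$ discrepancy between (a) and (b), and getting it right is the crux; the rest is careful bookkeeping of fibre types (in particular pinning down the fibre at infinity) and of the number of smooth $\Fp$-points of each base.
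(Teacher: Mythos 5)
Your proof is correct and follows the same route as the paper, which simply invokes Theorem \ref{thm:trace} together with the catalogue of singular fibres of $\mathcal{E}$ (types $I_4$, $I_2$ at $k=0,\infty$ and $III^*$ at $k^2=-1$) and of $\mathcal{F}$ (all multiplicative); your discriminant computations, fibre-type identifications, and point counts all check out, and you correctly isolate the crux, namely that the potentially good $III^*$ fibres contribute local trace $p$ via part (4) rather than $1$ via part (3), which produces the $-2\phi_p(-1)p$ term. The only difference is that you spell out the bookkeeping the paper leaves implicit.
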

\begin{proof}

The claim follows directly from the previous theorem and the information about the~reduction types of the singular fibers of elliptic surfaces $\mathcal{E}$ and $\mathcal{F}$. Namely, $\mathcal{E}$ has multiplicative singular fibers at $k=0,\infty$ (of types $I_4$ and $I_2$ respectively) and potentially good singular fibers at $k^2+1=0$ (of types $III^*$), while $\mathcal{F}$ has has multiplicative singular fibers at $k=0,\infty, -1, 1$ (of types $I_2, I_2, I_4$ and $I_4$ respectively).
\end{proof}

Consider the newform
$$f(\tau)=\sum_{n=1}^\infty c_f(n)q^n=q - 4q^3 - 2q^5 + 24q^7 - 11q^9 - 44q^{11} + \ldots \in S_4(\Gamma_0(8)).$$

Now we state the main result of this section.

\begin{theorem} \label{prop:main} Let $p>5$ be a prime, and $ \ell \ne p$. We have that
$$Trace(\rho_{1,\ell}^2(\Frob_p))=Trace(\rho_{2,\ell}^2(\Frob_p))=c_f(p).$$
\end{theorem}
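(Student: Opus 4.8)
The plan is to show that each $W_{2,\ell}^j$ is a two-dimensional $\ell$-adic representation, pure of weight $3$, that is modular of weight $4$ and level $8$; since $\dim S_4(\Gamma_0(8))=1$ and $f$ spans it, both representations must then be isomorphic to the representation $\rho_f$ attached to $f$, giving $Trace(\rho_{j,\ell}^2(\Frob_p))=c_f(p)$ simultaneously for $j=1,2$. It is worth noting at the outset that the equality of the two traces is not formal from Corollary~\ref{cor:1} and Proposition~\ref{lem:1} alone --- those relations still leave a genuine identity relating the sums $\sum a_{k,p}^2$ and $\sum b_{k,p}^2$ to be proved --- so the equality will emerge as a byproduct of the common modular identification rather than as an input to it.

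First I would compute the dimension and the ramification of $W_{2,\ell}^j$. Using the list of singular fibers recorded in the proof of Corollary~\ref{cor:1} --- for $\mathcal{E}$ the fibers $I_4$ at $k=0$, $I_2$ at $k=\infty$ and $III^*$ at $k^2=-1$, and for $\mathcal{F}$ the fibers $I_2,I_2,I_4,I_4$ at $k=0,\infty,-1,1$ --- the Euler characteristic of the sheaf $i_*\Sym^2\mathcal{F}_\ell^j$ on $\mathbb{P}^{1}$ follows from the Grothendieck--Ogg--Shafarevich formula, by summing the local invariant-dimensions of $\Sym^2$ at each bad point against the generic rank $3$. This should yield $\dim W_{2,\ell}^j=2$. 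The same local analysis shows that the representation is unramified away from $2$ and $\ell$: the multiplicative fibers are rational and contribute an $\Sym^2$-trace of $1$ by Theorem~\ref{thm:trace}(3), while the potentially good $III^*$ fibers of $\mathcal{E}$ at $k^2=-1$ are controlled by the local computation already carried out in Theorem~\ref{thm:trace}(4). A conductor--exponent computation at $p=2$ then pins the level down to exactly $8$.

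Next I would establish modularity and the weight. The Hodge--Tate weights of $W_{2,\ell}^j$ are $\{0,3\}$, consistent with a weight-$4$ form. For modularity the cleanest route is to identify $W_{2,\ell}^j$ with the transcendental part of $H^3_{ét}$ of the self-fibre product (Kuga--Sato type) threefold $\mathcal{E}\times_{\mathbb{P}^{1}}\mathcal{E}$, respectively $\mathcal{F}\times_{\mathbb{P}^{1}}\mathcal{F}$, along the lines of Scholl's construction \cite{Sch2}; this threefold is rigid Calabi--Yau, and its modularity (giving a weight-$4$ newform of level a power of $2$) is known from the work on modular rigid Calabi--Yau threefolds \cite{Verrill_Rigid_Calabi_Yau}, \cite{Cynk_Meyer}. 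Combined with the level-$8$ computation and the one-dimensionality of $S_4(\Gamma_0(8))$, this forces the associated form to be $f$. To finish I would verify the identification numerically up to the Sturm bound for $S_4(\Gamma_0(8))$, which is small: using Corollary~\ref{cor:1} together with direct point counts of $E_k$ and $F_k$ for the first few primes $p>5$, I would check $Trace(\rho_{j,\ell}^2(\Frob_p))=c_f(p)$ against the $q$-expansion $q-4q^3-2q^5+24q^7-\ldots$, which determines the newform uniquely.

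The hard part will be the modularity step, namely pinning the abstract two-dimensional weight-$4$ representation to the specific newform $f$. Deligne's theorem \cite{Deligne_Modular} would give this immediately if $E_k$ were the universal elliptic curve for $\Gamma_0(8)$, but it is not, so one cannot invoke it directly. The workaround is to route modularity through the rigid Calabi--Yau threefold as above, or alternatively through the Legendre structure of $F_k$ (the substitution $\lambda=\left(\tfrac{k+1}{k-1}\right)^2$ exhibits $\mathcal{F}$ as a base change of the Legendre family) together with the geometric correspondence between $\mathcal{E}$ and $\mathcal{F}$ implicit in the proof of Proposition~\ref{lem:1}: the quadratic pullback $\phi$, the fixed-point-free involution $\iota$, and the quadratic twist by $k^2+1$. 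A subtlety requiring care throughout is exactly this twist by the character $\chi$ associated to $K(\sqrt{k^2+1})/K$ from Proposition~\ref{prop:Galois_correspondence}, together with the behaviour at the potentially good $III^*$ fibers, since these are what distinguish $\rho_{1,\ell}^2$ from a naive symmetric square and account for the $\phi_p(-1)$ terms appearing in Corollary~\ref{cor:1}.
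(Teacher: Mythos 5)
Your overall strategy --- show each $W^j_{2,\ell}$ is two-dimensional, pure of weight $3$, and modular of weight $4$ and level $8$, then invoke $\dim S_4(\Gamma_0(8))=1$ --- is a coherent program and genuinely different from the paper's argument, and you are right that the equality of the two traces is not formal from Corollary~\ref{cor:1} and Proposition~\ref{lem:1} alone. But both of your proposed routes to the modularity of $\rho^2_{1,\ell}$ have gaps. The route through a rigid Calabi--Yau threefold rests on a false premise: the elliptic surface $\mathcal{E}$ with generic fiber $E_k$ is a K3 surface (its fibers $I_4, I_2, III^*, III^*$ have Euler numbers summing to $24$), so the fiber product $\mathcal{E}\times_{\mathbb{P}^1}\mathcal{E}$ has non-trivial canonical bundle --- Schoen's construction yields a Calabi--Yau threefold only from rational elliptic surfaces --- and the modularity results of \cite{Verrill_Rigid_Calabi_Yau}, \cite{Cynk_Meyer} cannot be applied to it. The fallback route through the correspondence underlying Proposition~\ref{lem:1} is incomplete: the quadratic pullback $\phi$ and the twist by $-(k^2+1)$ only relate $\sum b_{k,p}^2$ to the partial sum $\sum_{k^2+1=\square}a_{k,p}^2$, so to recover the full trace of $\rho^2_{1,\ell}$ one still needs the value of $\sum_k\phi_p(k^2+1)a_{k,p}^2$, and nothing in your proposal produces it. Finally, the Sturm-bound verification proves nothing until the level is rigorously bounded; the conductor computation at $2$ that you defer is itself a substantial piece of work.

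The paper avoids all of this. For the second equality it identifies $F_k$ as a twist of the universal elliptic curve with $\mathbb{Z}/2\oplus\mathbb{Z}/4$ torsion over $X(\Gamma_1(4)\cap\Gamma^0(2))$, applies Deligne's theorem directly, and checks $S_4(\Gamma_1(4)\cap\Gamma^0(2))\cong S_4(\Gamma_0(8))$; no conductor bound or numerical check is needed. For the first equality it reduces, via Corollary~\ref{cor:1} and Proposition~\ref{lem:1}, to the identity
$$\sum_{\substack{k\in\Fp,\ k^2\notin\{-1,0\}}}\phi_p(k^2+1)\,a_{k,p}^2=-2-2\lambda(p)^2+2\phi_p(-1)\,p,$$
which is exactly the piece missing from your argument, and proves it by counting $\#(X\setminus X_{k=0})(\Fp)$ in two ways: globally via the rational-elliptic-surface fibration (Lemma~\ref{lemma:pts}) and fiberwise over $k$ via the K3/abelian-surface correspondence (Proposition~\ref{prop:comparison} and Theorem~\ref{thm:Yk}). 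To salvage your approach you would need either to carry out the $2$-adic conductor computation for $W^1_{2,\ell}$ in full (after establishing irreducibility and modularity by some other means), or to supply the displayed point-count identity --- at which point you would essentially be reproducing the paper's proof.
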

\begin{proof}
	\textbf{Second equality} $Trace(\rho_{2,\ell}^2(\Frob_p))=c_f(p)$: the equality follows from Deligne's formula \cite[N$^\circ$ 5]{Deligne_Modular} relating the trace of Frobenius acting on Galois representation of a universal family of elliptic curves over a modular curve and the trace of Hecke operator $T_p$ acting on corresponding space of cusp forms. In this particular case, the family $F_k$ is a twist by $\sqrt{-1}$ of a universal family \cite[Table 3]{Kubert} $$y^2 + xy + \left(-\frac{1}{16} k^2 + \frac{1}{16}\right)y = x^3 + \left(-\frac{1}{16} k^2 +\frac{1}{16}\right) x^2$$  of elliptic curves with $\mathbb{Z}/2\oplus\mathbb{Z}/4$ torsion subgroup. The corresponding modular curve is $X(\Gamma)$ for $\Gamma=\Gamma_{1}(4)\cap \Gamma^{0}(2)$, a congruence subgroup of index $12$, with cusps $\{ \infty,0,1/3,1/2\}$. The following equalities $$\begin{bmatrix*}\frac{1}{\sqrt{2}} & 0\\0& \sqrt{2}
	\end{bmatrix*}^{-1}\Gamma_0(8) \begin{bmatrix*}\frac{1}{\sqrt{2}} & 0\\0& \sqrt{2}
	\end{bmatrix*} = \Gamma_{0}(4)\cap \Gamma^{0}(2)=\{\pm I\} \Gamma,$$  imply that the map $S_4(\Gamma_0(8))\rightarrow S_4(\Gamma)$, $g(\tau)\mapsto g(\tau/2)$ is an isomorphism. Hence, the space $S_4(\Gamma)$ is 1-dimensional and spanned by $f(\tau/2)$.
	
	\textbf{First equality}
	$Trace(\rho_{1,\ell}^2(\Frob_p))=Trace(\rho_{2,\ell}^2(\Frob_p))$:
	
	Corollary \ref{cor:1} shows that our claim is equivalent to:
	\[\sum_{\substack{k\in \Fp\\ k \notin \{-1,0,1\}}} b_{k,p}^2-2\phi_p(-1)p+2= \sum_{\substack{k\in \Fp\\ k^2 \notin \{ -1, 0\}}} a_{k,p}^2.
	\]
	Due to Proposition \ref{lem:1}, we need to prove:
	\[\sum_{\substack{k \in \Fp\\ k^2\notin \{-1,0\}}}\phi_p(k^2+1)a_{k,p}^2=-2 -2\lambda(p)^2+2\phi_p(-1)p.\quad (*)\]
		
     For $p$ an odd prime and $k\in \Fp$ such that $k^2 \notin \{-1,0\}$  Proposition \ref{prop:comparison} implies that $$\#X_k(\Fp)=\#\tilde{Y}_k(\Fp)-24p+6.$$ 
		
	If $k^2 \notin \{-1,0\}$, then it follows from Theorem \ref{thm:Yk} that
		$$\#X_k(\Fp)=7-5p+p^2+\phi_p(k^2+1)(a_{k,p}^2-p).$$
		
		For $k^2=-1$:
		
		$$\#X_k(\Fp)=7-(6+\phi_p(-1))p+p^2+\lambda(p)^2.$$
		
	   It follows from \eqref{eq:1} that
	 $$\sum_{\substack{k \in \Fp\\ k\ne 0}} \#X_{k}(\Fp) = p^3-6p^2+12p-9.$$
	 
	 The following sum
	 $$\sum_{\substack{k \in \Fp\\ k^2\notin \{-1,0\}}} (7-5p+p^2+\phi_p(k^2+1)(a_{k,p}^2-p))+ (1+\phi_p(-1))(7-(6+\phi_p(-1))p+p^2+\lambda(p)^2) $$
	 equals $p^3-6p^2+12p-9$ which is equivalent to
	    
	     $$\sum_{\substack{k \in \Fp\\ k^2\notin \{-1,0\}}}\phi_p(k^2+1)a_{k,p}^2 =  -2 - \lambda(p)^2(1+\phi_p(-1))  + 2 \phi_p(-1) p \quad (**)$$
	     
	     Equality $(**)$ is equivalent to $(*)$ since $-2\lambda(p)^2= - \lambda(p)^2(1+\phi_p(-1))$ is equivalent to
	     $$\lambda(p)^2(\phi_p(-1)-1)=0$$
	     which holds for every odd prime $p$. Indeed, if $p\equiv 1\pmod{4}$, we have $\phi_p(-1)=1$ and for $p\equiv -1\pmod{4}$ we have $\lambda(p)=0$ by Remark \ref{rem:trace_formula_special_E}.
\end{proof}

\begin{theorem} \label{thm:ap} Let $p$ be an odd prime. Then
\[
\sum_{\substack{k\in \Fp\ k^2 \notin \{ -1, 0\}}} a_{k,p}^2 =\begin{cases}
 p^2-5p-2-c_f(p),& \textrm{if } p \equiv 1 \pmod{4},\\
 p^2-p-2-c_f(p),&\textrm{if } p \equiv 3 \pmod{4}.
\end{cases}
\]
and 
\[
\sum_{\substack{k\in \Fp\\ k \notin \{-1,0,1\}}} b_{k,p}^2 =
p^2-3p-4-c_f(p).
\]
\end{theorem}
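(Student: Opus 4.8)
The plan is to obtain both identities essentially for free from the trace computations already in place, and then to close by hand the two small primes left outside their range of validity. For the first sum I would combine part~(a) of Corollary~\ref{cor:1}, which reads
\[
Trace(\rho_{1,\ell}^2(\Frob_p)) = p^2-3p-2\phi_p(-1)p-2-\sum_{\substack{k\in\Fp\\ k^2\notin\{-1,0\}}}a_{k,p}^2,
\]
with Theorem~\ref{prop:main}, which identifies the left-hand side with $c_f(p)$ for every prime $p>5$. Solving for the sum gives $\sum_{k^2\notin\{-1,0\}}a_{k,p}^2 = p^2-3p-2\phi_p(-1)p-2-c_f(p)$. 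The second identity is even more immediate: part~(b) of Corollary~\ref{cor:1} together with the same theorem yields $\sum_{k\notin\{-1,0,1\}}b_{k,p}^2 = p^2-3p-4-c_f(p)$, which is already the claimed formula with no case distinction.

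To reach the piecewise shape of the first formula I would then split on the value of $\phi_p(-1)=\left(\frac{-1}{p}\right)$, using that this Legendre symbol equals $+1$ when $p\equiv 1\pmod 4$ and $-1$ when $p\equiv 3\pmod 4$. Setting $\phi_p(-1)=1$ merges $-3p-2p$ into $-5p$ and produces $p^2-5p-2-c_f(p)$, while $\phi_p(-1)=-1$ merges $-3p+2p$ into $-p$ and produces $p^2-p-2-c_f(p)$; these are exactly the two branches in the statement.

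The one genuine gap, and hence the only real work, is that Theorem~\ref{prop:main} and Corollary~\ref{cor:1} are proved only for $p>5$, whereas the assertion covers all odd primes, including $p=3$ and $p=5$. I would dispatch these two cases by direct point-counting: for each such $p$ I enumerate the admissible $k\in\Fp$, compute $a_{k,p}=p+1-\#E_k(\Fp)$ and $b_{k,p}=p+1-\#F_k(\Fp)$ on the explicit curves, and compare against the right-hand sides using $c_f(3)=-4$ and $c_f(5)=-2$ read off from the $q$-expansion of $f$. For instance at $p=3$ only $k=1,2$ survive and both give the same curve $E_k: y^2=x(x^2+2x+2)$ with $a_{k,3}^2=4$, so the sum equals $8=p^2-p-2-c_f(3)$, matching the $p\equiv 3\pmod 4$ branch. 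I expect no conceptual difficulty anywhere: the analytic heart of the matter is entirely contained in Theorem~\ref{prop:main}, and this final theorem is little more than its bookkeeping, the sole subtlety being the by-hand check at the two excluded primes.
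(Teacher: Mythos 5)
Your proposal is correct and follows exactly the paper's own route: for $p>5$ the two formulas are obtained by equating the trace expressions of Corollary \ref{cor:1} with the common value $c_f(p)$ from Theorem \ref{prop:main} and splitting on $\phi_p(-1)$, while $p=3$ and $p=5$ are settled by direct computation. Your sample verification at $p=3$ (sum $=8=p^2-p-2-c_f(3)$ with $c_f(3)=-4$) is accurate, so nothing further is needed.
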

\begin{proof}
If $p>5$ then the claim follows directly from Corollary \ref{cor:1} and Theorem \ref{prop:main}. In other cases, the claim follows by direct computation.
\end{proof}

\begin{remark}
Note that, arguing as in the proof of Theorem \ref{thm:bias}, it follows from the previous theorem that the second moment formula for the family $F_k$ is
\begin{equation}
	M_{2,p}(F_{k}) = p^2-c_f(p)-3p -1.
\end{equation}
So the family satisfies the Bias conjecture. For the family $H_k$ we obtain the formula
\begin{equation}
	M_{2,p}(H_{k}) = p^2-c_f(p)-3p-2\lambda(p)^2 -1.
\end{equation}
In this case we have the additional contribution of $-2\lambda(p)^2$ which contributes to $f_2(p)$ due to Hasse bound. By the Sato-Tate conjecture for the CM elliptic curve $y^2=x^3-x$ the average of $\lambda(p)^2/p$ equals $1$, hence the Bias conjecture holds for $H_k$ as well.
\end{remark}

\section{Diophantine triples over finite fields} \label{sec:triples}

In this section, by using Lemma \ref{lemma:pts} we prove the following theorem which was first proved in \cite{Dujella_Kazalicki_ANT} using different methods.

\begin{theorem} 
Let $p$ be an odd prime and let $q=p^m$ for any $m\geq 1$. The number of Diophantine triples in $\mathbb{F}_q$ is equal to
\[
N(q) =
			\begin{cases}
			\frac{(q-1)(q-3)(q-5)}{48}, \textrm{ if } q \equiv 1 \pmod{4},\\
			\frac{(q-3)(q^2-6q+17)}{48}, \textrm{ if } q \equiv 3 \pmod{4}.\\
			\end{cases}
\]
\end{theorem}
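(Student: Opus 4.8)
The plan is to convert the point count of Lemma~\ref{lemma:pts} into a count of Diophantine triples through Proposition~\ref{prop:intro}. That proposition identifies ordered Diophantine triples over $\Fq$ with the orbits of the sign action $(x,y,z,k)\mapsto(\pm x,\pm y,\pm z,k)$ of $(\mathbb{Z}/2\mathbb{Z})^3$ on $\widetilde{X}(\Fq)$, where $\widetilde{X}=X\setminus\{k(x^2-y^2)(x^2-z^2)(y^2-z^2)=0\}$. Since the elements of a Diophantine triple are distinct, each unordered triple corresponds to exactly $3!=6$ ordered triples, so I would compute
\[
N(q)=\frac{1}{6}\cdot\#\{\text{orbits of }(\mathbb{Z}/2\mathbb{Z})^3\text{ on }\widetilde{X}(\Fq)\}.
\]

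The action is free except on the coordinate hyperplanes: a point is fixed by a nontrivial sign change exactly when one of $x,y,z$ vanishes, and on $\widetilde{X}$ the inequalities $x^2\neq y^2$, $x^2\neq z^2$, $y^2\neq z^2$ forbid two coordinates from vanishing at once. Hence every orbit has size $8$ (when $xyz\neq0$) or size $4$ (when exactly one coordinate is $0$). Writing $A$ and $B$ for the numbers of points of these two kinds, so that $\#\widetilde{X}(\Fq)=A+B$, the number of ordered triples is $\tfrac{A}{8}+\tfrac{B}{4}$ and therefore
\[
N(q)=\frac{1}{6}\left(\frac{A}{8}+\frac{B}{4}\right)=\frac{A+2B}{48}.
\]
Thus it suffices to compute $\#\widetilde{X}(\Fq)=A+B$ and the boundary term $B$.

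For $\#\widetilde{X}(\Fq)$ I would begin with $\#(X\setminus X_{k=0})(\Fq)=q^3-6q^2+12q-9$ from Lemma~\ref{lemma:pts} and subtract the locus where $k\neq0$ but $(x^2-y^2)(x^2-z^2)(y^2-z^2)=0$. Any two of the conditions $x^2=y^2$, $x^2=z^2$, $y^2=z^2$ force the third, so all pairwise and triple intersections of the three diagonals coincide and inclusion--exclusion collapses to $3\#D_1-2\#E$, where $D_1=\{x^2=y^2,\,k\neq0\}$ and $E=\{x^2=y^2=z^2,\,k\neq0\}$ inside $X(\Fq)$. On $D_1$ the equation reads $(x^2-1)^2(z^2-1)=k^2$, reducing the count to the number of $z$ with $z^2-1$ a nonzero square; Proposition~\ref{prop:quadratic_char_summation} gives $\sum_z\phi_q(z^2-1)=-1$, hence $(q-3)/2$ such $z$ and $\#D_1=(q-3)(2q-5)$. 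The set $E$, governed by $(x^2-1)^3=k^2$, is handled identically, and it is the single place where the point $x=0$ --- hence $\phi_q(-1)$, i.e. the class of $q$ modulo $4$ --- enters, yielding $\#\widetilde{X}(\Fq)=q^3-12q^2+53q-84-6\,\phi_q(-1)$.

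For the boundary term, symmetry gives $B=3B_0$ with $B_0$ the number of points having $x=0$; these satisfy $-(y^2-1)(z^2-1)=k^2$ with $y,z\notin\{0,\pm1\}$ and $y^2\neq z^2$, so $B_0=2\,\#\{(y,z):\phi_q(-(y^2-1)(z^2-1))=1\}$, another character sum evaluated by Proposition~\ref{prop:quadratic_char_summation} via $\sum_y\phi_q(y^2-1)=-1$. It comes out to $B_0=(q-5)^2$ for $q\equiv1\pmod4$ and $B_0=(q-3)^2$ for $q\equiv3\pmod4$. Substituting $A=\#\widetilde{X}(\Fq)-B$ and $B=3B_0$ into $N(q)=(A+2B)/48$ gives the two stated formulas. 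I expect the main obstacle to be organizational rather than deep: keeping the non-free part of the action (the coordinate planes and the diagonals) correctly separated in the inclusion--exclusion, and tracking the single dependence on $\phi_q(-1)$ that splits the answer into the two congruence classes.
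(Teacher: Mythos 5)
Your argument is correct and is essentially the paper's own proof: both start from $\#(X\setminus X_{k=0})(\Fq)$ in Lemma \ref{lemma:pts}, strip off the loci where $x^2,y^2,z^2$ coincide via the same character sums from Proposition \ref{prop:quadratic_char_summation} (your $3\#D_1-2\#E$ is exactly the paper's $N_3+N_4$), count the $xyz=0$ stratum directly, and weight by $48$ (your $(A+2B)/48$ is the paper's $(N_2+2N_1)/48$). The only difference is cosmetic bookkeeping — you justify the weights by orbit sizes of the sign action, the paper by the $\pm k$ pairing of triples — and all your intermediate counts agree with the paper's.
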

\begin{remark}
If $q=2^m$ then every element in $\Fq$ is a square (since Frobenius map $x\mapsto x^2$ is an~automorphism of $\Fq$), hence every triple in $\Fq$ will have Diophantine property.
\end{remark}

\begin{proof} Assume that $q=p$.
We will start by carefully studying the correspondence between the~points on $(X\setminus X_{k=0})(\Fp)$ and Diophantine triples in $\Fp$. To the point $(x,y,z,k)\in (X\setminus X_{k=0})(\Fp)$, we associate two different triples $\{\frac{k}{x^2-1},\frac{k}{y^2-1},\frac{k}{z^2-1}\}$ and $\{\frac{-k}{x^2-1},\frac{-k}{y^2-1},\frac{-k}{z^2-1}\}$ (they are different since the product of the elements of the first triple is $k$ and of the second is $-k$). These are Diophantine triples if $x^2, y^2$ and $z^2$ are pairwise distinct. 

Denote by $N_1(p)$ and $N_2(p)$ the number of points in $(X\setminus X_{k=0})(\Fp)$ for which $x^2, y^2$ and $z^2$ are pairwise distinct and such that $xyz=0$ and $xyz\ne 0$ respectively. Since the points $(\pm x,\pm y, \pm z, \pm k)$ all give rise to the same Diophantine triples (as well as the points we get by permuting coordinates $x,y$ and $z$), we have that $$N(p)=\frac{2N_1(p)+N_2(p)}{48}.$$

First, we count the points $(x,y,z,k)\in (X\setminus X_{k=0})(\Fp)$, with $x^2=y^2=z^2$.
This condition is equivalent to $x^2-1=k^2\ne 0$. 
It follows from Proposition \ref{prop:quadratic_char_summation} that the number of such point is equal to $$N_3(p)=\begin{cases}4(p-5)+2, \textrm{ if } p \equiv 1 \pmod{4}\\4(p-3), \textrm{ if } p\equiv 3 \pmod{4}.\end{cases}$$

In the similar way, the number of points $(x,y,z,k)\in (X\setminus X_{k=0})(\Fp)$ with $x^2=y^2\ne z^2$ for some permutation of $x,y$ and $z$ is equal to 
$$N_4(p)=\begin{cases}6p^2-45p+99, \textrm{ if } p \equiv 1 \pmod{4}\\6p^2-45p+81, \textrm{ if } p\equiv 3 \pmod{4}.\end{cases}$$

Finally, to calculate $N_1(p)$ we need to count solutions $(x,y,k)\in \Fp^3$ of equation $(x^2-1)(y^2-1)=-k^2$ with $xyk \ne 0$ and $x^2 \ne y^2$. Since the product $(x^2-1)(y^2-1)$ is a square if both factors are either squares or non-squares, from the previous results it follows that
$$N_1(p)=\begin{cases}3(p^2-10p+25), \textrm{ if } p \equiv 1 \pmod{4}\\3(p^2-6p+9),  \textrm{ if } p\equiv 3 \pmod{4}.\end{cases}$$

Since $N_2(p)=\#(X\setminus X_{k=0})(\Fp)-N_1(p)-N_3(p)-N_4(p)$ from Lemma \ref{lemma:pts} we obtain
$$N_{2}(p)=\begin{cases}p^3-15p^2+83p-165, \textrm{ if } p \equiv 1 \pmod{4}\\ (p-7)(p-5)(p-3),\textrm{ if } p \equiv 3 \pmod{4}\end{cases},$$ and the formula for $N(p)$ follows.

The general case $q=p^m$ is proved analogously.
\end{proof}

Following the proof of the previous theorem, for a prime $p>2$, we can derive the number $N(p,k)$ of Diophantine triples in $\Fp$ with the fixed product $k\in \Fp^\times$.
For $k \in \Fp$ such that $(k^2+1)k \ne 0$, let
 $$G_k: y^2=x^3+x^2-\frac{k^2}{4}x$$
 be an elliptic curve over $\Fp$ which is birationally equivalent to the genus one curve $(x^2-1)^2(y^2-1)=k^2$, and let 
$$H_k:  y^2 = x^3 + (2k^2 + 4)x^2 + k^4 x,$$ be an elliptic curve over $\Fp$ birationally equivalent to the genus one curve $(x^2-1)(y^2-1)=-k^2$. Denote by $c_{k,p}=p+1-\#G_k(\Fp)$ and $d_{k,p}=p+1-\#H_k(\Fp)$. Also, denote by
$e_{k,p}$ the number of distinct $\Fp$-rational solutions of the equation $(x^2-1)^2=-k^2$, and by $f_{k,p}$ the number of distinct $\Fp$-rational solutions of the equation $(x^2-1)^3= k^2$.

\begin{corollary}\label{thm:fixed}
Let $p>3$ be a prime, and let $k \in \Fp^\times$. Then
\begin{enumerate}
\item[a)]	 If $k^2 \ne -1$, then 
\begin{align*}
96 \cdot N(p,k) &=
 2 p^2 +\underbrace{2\phi_p(k^2+1)(a_{k,p}^2-p)-16p}_{=O(p)}\\& +\underbrace{12c_{k,p}-6d_{k,p}}_{=O(\sqrt{p})}+\underbrace{50-12e_{k,p}+16 f_{k,p}-6\phi_p(k^2+1)}_{=O(1)}.
\end{align*}
\item[b)] If $k^2 = -1$, then
$$48\cdot N(p,k)=p^2+\underbrace{(4 b^2-10p)}_{=O(p)}+\underbrace{8f_{k,p}+13}_{=O(1)},$$
where $p=a^2+b^2$ with $b$ odd.
\end{enumerate}
	
\end{corollary}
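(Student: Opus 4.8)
The plan is to adapt the point count in the proof of the preceding theorem to a single K3 fibre $X_k$, and then to remove the degenerate points that fail to produce a genuine triple. To a point $(x,y,z)\in X_k(\Fp)$ we attach the triple $\{\tfrac{k}{x^2-1},\tfrac{k}{y^2-1},\tfrac{k}{z^2-1}\}$, whose product is $k$ and whose elements are automatically nonzero; they are distinct precisely when $x^2,y^2,z^2$ are pairwise distinct. The group $(\ZZ/2\ZZ)^3\rtimes S_3$ of order $48$ acts on the fibre by sign changes and permutations of the coordinates and fixes the associated unordered triple, so a triple whose preimage consists of points with no vanishing coordinate corresponds to an orbit of length $48$, while a triple coming from a point with one coordinate equal to $0$ (forcing that coordinate's sign to be irrelevant) corresponds to an orbit of length $24$. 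Writing $\tilde N_1$ and $\tilde N_2$ for the numbers of points of $X_k(\Fp)$ with pairwise distinct squares and with, respectively, $xyz=0$ and $xyz\neq 0$, this gives $48\,N(p,k)=2\tilde N_1+\tilde N_2$; doubling this relation keeps all coefficients integral and yields the normalisation $96\,N(p,k)=2(2\tilde N_1+\tilde N_2)$ of part (a), whereas in part (b) the constants are already integral and the factor $48$ is retained.

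The next step is to write $2\tilde N_1+\tilde N_2=\#X_k(\Fp)-C+\tilde N_1$, where $C$ is the number of points of $X_k(\Fp)$ for which some two of $x^2,y^2,z^2$ coincide. For the leading contribution I would insert the closed formula for $\#X_k(\Fp)$ from Proposition \ref{prop:comparison} and Theorem \ref{thm:Yk}, that is \eqref{eq:i2}; for $k^2\neq-1$ this supplies the terms $2p^2+2\phi_p(k^2+1)(a_{k,p}^2-p)$ together with part of the remaining contribution, while for $k^2=-1$ the potentially good $III^{*}$ fibres replace $\phi_p(k^2+1)(a_{k,p}^2-p)$ by $\lambda(p)^2=4b^2$ through Remark \ref{rem:trace_formula_special_E}. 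Each remaining stratum is then matched with an explicit curve. The collision $x^2=z^2$ cuts out $z=\pm x$ on the affine curve $(x^2-1)^2(y^2-1)=k^2$, which is birational to $G_k$, so $C$ is controlled by $c_{k,p}=p+1-\#G_k(\Fp)$; the locus $z=0$ is the curve $(x^2-1)(y^2-1)=-k^2$, birational to $H_k$, so $\tilde N_1$ is controlled by $d_{k,p}=p+1-\#H_k(\Fp)$. The two fully degenerate strata are finite: $x^2=y^2=z^2$ forces $(x^2-1)^3=k^2$, contributing $f_{k,p}$, and the simultaneous $x^2=y^2$, $z=0$ forces $(x^2-1)^2=-k^2$, contributing $e_{k,p}$.

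Assembling the strata with their multiplicities is the technical core. The coefficient of $c_{k,p}$ is $12=2\cdot 3\cdot 2$: the global factor $2$ from $96=2\cdot 48$, the factor $3$ for the choice of the colliding pair, and the factor $2$ for the two signs $z=\pm x$ in a collision; the coefficient of $d_{k,p}$ is $6=2\cdot 3$, with no sign factor because $z=0$ is a single value. The $O(1)$ terms $50$, $-12e_{k,p}$, $16f_{k,p}$ and $-6\phi_p(k^2+1)$ then arise from the constant $14=2\cdot 7$ in $2\#X_k(\Fp)$, from the inclusion--exclusion corrections at the finite strata, from the character-sum evaluations of the exact cardinalities of the affine curves via Proposition \ref{prop:quadratic_char_summation}, and from reconciling the affine counts with the projective point counts $\#G_k(\Fp)$ and $\#H_k(\Fp)$ by accounting for the points at infinity and the base points removed by the birational maps. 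For part (b) the factor $\phi_p(k^2+1)$ disappears, the auxiliary cubics $G_k$ and $H_k$ degenerate and their contributions become explicit, so that $c_{k,p}$, $d_{k,p}$ and $e_{k,p}$ no longer appear, and one is left with $p^2+(4b^2-10p)+8f_{k,p}+13$.

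The step I expect to be the main obstacle is exactly this bookkeeping of the degenerate loci: counting the ordered points in each collision pattern with the correct sign and permutation multiplicities, separating the overlapping strata by inclusion--exclusion, and matching the singular affine counts to the smooth models $G_k$ and $H_k$ by tracking every point added or deleted by the birational correspondences. Once this stratification is in place, the insertion of \eqref{eq:i2}, the evaluation of the quadratic-character sums, and the $k^2=-1$ specialisation are all routine.
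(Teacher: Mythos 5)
Your proposal is correct and follows exactly the route the paper intends: the paper offers no explicit proof of this corollary beyond the remark that it follows the argument of the preceding theorem, and your stratification of $X_k(\Fp)$ by the collision pattern of $x^2,y^2,z^2$ and the vanishing of coordinates, with orbit sizes $48$ and $24$, the insertion of \eqref{eq:i2}, and the identification of the degenerate strata with $G_k$, $H_k$, $e_{k,p}$ and $f_{k,p}$, is precisely that argument. Your multiplicity checks ($12=2\cdot 3\cdot 2$ for $c_{k,p}$, $6=2\cdot 3$ for $d_{k,p}$, and $-10p-12p+6p=-16p$) are consistent with the stated formula, and the bookkeeping you defer is likewise left implicit in the paper.
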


\section*{Appendix - Circular Diophantine $m$-tuples by Luka Lasić} \label{appendix}

In this appendix, we describe a parametrization of circular Diophantine $m$-tuples. For $m=3$ we obtain a parametrization of rational Diophantine triples for which we show that it is equivalent to the one defined in Section \ref{sec:param_of_triples}.

\begin{definition}
A sequence of $m$ rational numbers $(a_1, a_2, \ldots, a_m)$ is called a circular Diophantine $m$-tuple if $a_{i-1} a_i+1$ is a perfect square for all $i\in \{1,2,\ldots, m\}$ (we use ``circular'' notation: $a_k:=a_{k \bmod{m}+1}$ for all $k \in \ZZ$).
\end{definition}

Given a circular Diophantine $m$-tuple with non-zero elements $(a_1, a_2, \ldots, a_m)$, we define rational numbers $t_i = \frac{1\pm \sqrt{1+a_{i-1}a_i}}{a_i}$ for all $i \in \{1,2,\ldots,m\}$ and for any choice of signs. Define rational functions $F_m=F_m(T_1, T_2, \ldots, T_m) $ and $G_m=G_m(T_1, T_2, \ldots, T_m) $ by

\begin{align*}
	F_m=&\frac{2 T_1(1+T_1 T_2(1+T_2 T_3(1+\ldots(1+ T_{m-2}T_{m-1}(1+T_{m-1}T_m))\ldots)))}{(T_1 T_2 \cdots T_m)^2-1},\\
	G_m=&\frac{1+T_1 T_2(2+T_2 T_3(2+T_3 T_4(2+\ldots (2+T_{m-1}T_{m}(2+T_{m}T_1))\ldots)))}{(T_1 T_2 \cdots T_m)^2-1}.\\
\end{align*}
One can check that $a_i=F_m(t_i,t_{i+1}, \ldots, t_{i+m-1})$, if $t_1 t_2\cdots t_m \ne \pm 1$.

\noindent
\medskip
Conversely, since
\[
F_m(T_i, \ldots, T_{i+m-1})F_m(T_{i+1}, \ldots, T_{i+m})+1 = G_m(T_i, \ldots, T_{i+m-1})^2,
\]
for any choice of $T_i \in \QQ$ such that $T_1 T_2 \cdots T_m \ne \pm 1$ it follows that 
\[
\left(F_m(T_1, T_2, \ldots, T_m), F_m(T_2, T_3, \ldots, T_1), \ldots, F_m(T_m, T_1, \ldots, T_{m-1})\right)
\]
is a circular Diophantine $m$-tuple.

In particular, when $m=3$ circular Diophantine triple $(a_1,a_2,a_3)$ is actually a rational Diophantine triple (provided that elements of the triple are distinct and non-zero). Hence, the~parametrization of circular Diophantine triples provides us with the parametrization of rational Diophantine triples

\begin{align*}
a_1&=\frac{2 t_1 \left(\left(t_2 t_3+1\right) t_1 t_2+1\right)}{t_1^2 t_2^2 t_3^2-1},\\
a_2&=\frac{2 t_2 \left(\left(t_1 t_3+1\right) t_2 t_3+1\right)}{t_1^2 t_2^2 t_3^2-1},\\
a_3&=\frac{2 t_3 \left(\left(t_1 t_2+1\right) t_1 t_3+1\right)}{t_1^2 t_2^2 t_3^2-1}.
\end{align*}

Moreover,
\begin{align*}
r=G_{3}(t_1,t_2,t_3)&=\frac{\left(\left(t_1 t_3+2\right) t_2 t_3+2\right) t_1 t_2+1}{t_1^2 t_2^2 t_3^2-1},\\
s=G_{3}(t_2,t_3,t_1)&=\frac{\left(\left(t_1 t_2+2\right) t_1 t_3+2\right) t_2 t_3+1}{t_1^2 t_2^2 t_3^2-1},\\
t=G_{3}(t_3,t_1,t_2)&=\frac{\left(\left(t_2 t_3+2\right) t_1 t_2+2\right) t_1 t_3+1}{t_1^2 t_2^2 t_3^2-1}
\end{align*}

and
\[\Delta=\frac{8 t_1 t_2 t_3 \left(\left(t_1 t_2+1\right) t_1 t_3+1\right) \left(\left(t_1 t_3+1\right) t_2 t_3+1\right) \left(\left(t_2 t_3+1\right) t_1 t_2+1\right)}{\left(t_1^2 t_2^2 t_3^2-1\right){}^3}.\]

It follows that
\[(r^2-1)(s^2-1)(t^2-1)=\Delta^2,\]
hence we can define a birational map $\mathcal{L}:\mathbb{A}^{3}\rightarrow X$
	\[\mathcal{L}(t_1,t_2,t_3) = (G_{3}(t_1,t_2,t_3),G_3(t_2,t_3,t_1),G_3(t_3,t_1,t_2),\Delta(t_1,t_2,t_3)).\]
The parametrization $\mathcal{L}$ is equivalent to the one we have defined in Section \ref{sec:param_of_triples}.
\begin{corollary}
	The map $\mathcal{L}$ is equal up to a birational automorphism $\mu$ of $\mathbb{A}^{3}$ to the affine restriction $\psi|_{\mathbb{A}^{3}}$ to the first chart of the map $\psi$ from Proposition \ref{prop:param_of_triples}, i.e.
	\[\mathcal{L} =\psi|_{\mathbb{A}^{3}}\circ\mu \]
	where
	\[\mu(t_1,t_2,t_3)=(s,t,a_1 a_2/t_2).\]
\end{corollary}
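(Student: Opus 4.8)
The plan is to establish the equality $\mathcal{L} = \psi|_{\AA^3}\circ\mu$ by direct comparison of rational maps, exploiting the fact that both sides are already written out in coordinates. The key observation is that the corollary is essentially a bookkeeping statement: both $\mathcal{L}$ and $\psi|_{\AA^3}$ are birational parametrizations of the same threefold $X$, so they must differ by a birational automorphism of the source $\AA^3$, and the content of the claim is the \emph{explicit} identification of that automorphism as $\mu(t_1,t_2,t_3)=(s,t,a_1a_2/t_2)$, where $s$, $t$, $a_1$, $a_2$ are the rational functions of $(t_1,t_2,t_3)$ displayed just above the corollary.

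First I would pin down $\psi|_{\AA^3}$ concretely. Recall that $\psi$ as given in Proposition \ref{prop:param_of_triples} maps into the projective closure $\overline{X}\subset\mathbb{P}^4$ with coordinates $[x:y:z:k:w]$; restricting to the first chart means dehomogenizing by setting $u$ and the last coordinate appropriately and reading off the affine coordinates $(x,y,z,k)$ as ratios of the five homogeneous components. Equivalently, following the Remark after the proposition, the affine map is $x=\frac{A+T_3^2}{A-T_3^2}$, $k=\frac{2AT_3}{A-T_3^2}$ with $y=T_1$, $z=T_2$ and $A=(T_1^2-1)(T_2^2-1)$; I would use this simpler closed form rather than the homogeneous five-component expression. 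Next I would substitute $\mu(t_1,t_2,t_3)=(s,t,a_1a_2/t_2)$ into this affine form of $\psi$, where $s=G_3(t_2,t_3,t_1)$, $t=G_3(t_3,t_1,t_2)$, and $a_1,a_2$ are the first two parametrizing functions of the triple.

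The computation then reduces to verifying four identities in the rational function field $\QQ(t_1,t_2,t_3)$: that each of the four components of $\psi|_{\AA^3}(\mu(t_1,t_2,t_3))$ equals the corresponding component $G_3(t_1,t_2,t_3)$, $G_3(t_2,t_3,t_1)$, $G_3(t_3,t_1,t_2)$, $\Delta(t_1,t_2,t_3)$ of $\mathcal{L}(t_1,t_2,t_3)$. Since everything is expressed as explicit rational functions with common denominator $t_1^2t_2^2t_3^2-1$ (for the $G_3$'s and $a_i$'s) and its cube (for $\Delta$), this is a finite symbolic check: clear denominators and confirm that the resulting polynomial numerators agree. I would also note that $a_1a_2/t_2$ is the natural candidate for the third coordinate of $\mu$ because the displayed $\Delta$ factors as $8t_1t_2t_3$ times the three numerators of $a_1,a_2,a_3$ divided by $(t_1^2t_2^2t_3^2-1)^3$, so the role of $t_3$ in $\psi$ (the coordinate $T_3$ governing the pencil of lines) corresponds under $\mu$ to the product $a_1a_2/t_2$.

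The main obstacle is purely computational rather than conceptual: one must correctly restrict the projective map $\psi$ to the affine chart and keep track of the permutation of roles among $(x,y,z)$ versus $(T_1,T_2,T_3)$ and among $(a_1,a_2,a_3)$, $(r,s,t)$, since a wrong matching of coordinates will make the identities fail even though a correct relabeling exists. Once the affine form of $\psi$ and the correct assignment are fixed, confirming that $\mu$ is itself birational (so that the equation $\mathcal{L}=\psi|_{\AA^3}\circ\mu$ genuinely exhibits $\mathcal{L}$ as $\psi|_{\AA^3}$ composed with an automorphism) follows because $\psi$ and $\mathcal{L}$ are each birational onto $X$ by construction, forcing $\mu=\psi|_{\AA^3}^{-1}\circ\mathcal{L}$ to be birational automatically; explicitly inverting $\mu$ is therefore unnecessary. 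The entire verification can be discharged by a direct symbolic computation (e.g.\ in a computer algebra system), exactly as the analogous map identities in Proposition \ref{prop:param_of_triples} were checked.
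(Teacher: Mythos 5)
Your proposal coincides with the paper's (implicit) proof: the corollary is stated without argument and is clearly meant to be verified exactly as you describe, by restricting $\psi$ to the chart $u=1$ so that, per the Remark, $\psi|_{\mathbb{A}^3}(T_1,T_2,T_3)=\bigl(\tfrac{A+T_3^2}{A-T_3^2},\,T_1,\,T_2,\,\tfrac{2AT_3}{A-T_3^2}\bigr)$ with $A=(T_1^2-1)(T_2^2-1)$, and then checking the four component identities in $\QQ(t_1,t_2,t_3)$ symbolically. Your caveat about index matching is the essential point in practice: the second and third components force $\mu_1=s$, $\mu_2=t$, and the remaining two identities force $\mu_3=\Delta/(r+1)$, which (using $r+1=t_1a_2$ and $\Delta=a_1a_2a_3$) equals $a_1a_3/t_1$ rather than the displayed $a_1a_2/t_2$ — e.g.\ at $(t_1,t_2,t_3)=(1,2,3)$ one gets $a_1a_3/t_1=72/49$ versus $a_1a_2/t_2=60/49$ — so the symbolic check you propose must be carried out with this relabeling in hand.
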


\bibliographystyle{alpha}
\bibliography{bibliography}

\newcommand{\etalchar}[1]{$^{#1}$}
\begin{thebibliography}{BHPVdV04}

\bibitem[ACF{\etalchar{+}}18]{Miller_et_al_biases}
Megumi Asada, Ryan~C. Chen, Eva Fourakis, Yujin~Hong Kim, Andrew Kwon,
  Jared~Duker Lichtman, Blake Mackall, Steven~J. Miller, Eric Winsor, Karl
  Winsor, Jianing Yang, and Kevin Yang.
\newblock Lower-order biases in the second moment of {D}irichlet coefficients
  in families of {$L$}-functions (with appendices by {S}teven {J}. {M}iller and
  {J}iefei {W}u, and {S}teven {J}. {M}iller and {Y}an {W}eng).
\newblock arXiv:1808.06056, 2018.

\bibitem[BEW98]{Berndt_Evans_Williams}
Bruce~C. Berndt, Ronald~J. Evans, and Kenneth~S. Williams.
\newblock {\em Gauss and {J}acobi sums}.
\newblock Canadian Mathematical Society Series of Monographs and Advanced
  Texts. John Wiley \& Sons, Inc., New York, 1998.
\newblock A Wiley-Interscience Publication.

\bibitem[BHPVdV04]{Barth_Hulek_Peters_Van_de_Ven}
Wolf~P. Barth, Klaus Hulek, Chris A.~M. Peters, and Antonius Van~de Ven.
\newblock {\em Compact complex surfaces}, volume~4 of {\em Ergebnisse der
  Mathematik und ihrer Grenzgebiete. 3. Folge. A Series of Modern Surveys in
  Mathematics [Results in Mathematics and Related Areas. 3rd Series. A Series
  of Modern Surveys in Mathematics]}.
\newblock Springer-Verlag, Berlin, second edition, 2004.

\bibitem[BL13]{Bertin_Lecacheux}
Marie-Jos\'{e} Bertin and Odile Lecacheux.
\newblock Elliptic fibrations on the modular surface associated to
  {$\Gamma_1(8)$}.
\newblock In {\em Arithmetic and geometry of {K}3 surfaces and {C}alabi-{Y}au
  threefolds}, volume~67 of {\em Fields Inst. Commun.}, pages 153--199.
  Springer, New York, 2013.

\bibitem[BLGHT11]{Bartnet_Geraghty_Harris_II}
Tom Barnet-Lamb, David Geraghty, Michael Harris, and Richard Taylor.
\newblock A family of {C}alabi-{Y}au varieties and potential automorphy {II}.
\newblock {\em Publ. Res. Inst. Math. Sci.}, 47(1):29--98, 2011.

\bibitem[CD89]{Cossec_Dolgachev}
Fran\c{c}ois~R. Cossec and Igor~V. Dolgachev.
\newblock {\em Enriques surfaces. {I}}, volume~76 of {\em Progress in
  Mathematics}.
\newblock Birkh\"{a}user Boston, Inc., Boston, MA, 1989.

\bibitem[CM07]{Cynk_Meyer}
Slawomir Cynk and Christian Meyer.
\newblock Modular {C}alabi-{Y}au threefolds of level eight.
\newblock {\em Internat. J. Math.}, 18(3):331--347, 2007.

\bibitem[Del71]{Deligne_Modular}
Pierre Deligne.
\newblock Formes modulaires et repr\'{e}sentations {$l$}-adiques.
\newblock In {\em S\'{e}minaire {B}ourbaki. {V}ol. 1968/69: {E}xpos\'{e}s
  347--363}, volume 175 of {\em Lecture Notes in Math.}, pages Exp. No. 355,
  139--172. Springer, Berlin, 1971.

\bibitem[DK17]{Dujella_Kazalicki}
Andrej Dujella and Matija Kazalicki.
\newblock More on {D}iophantine sextuples.
\newblock In {\em Number theory---{D}iophantine problems, uniform distribution
  and applications}, pages 227--235. Springer, Cham, 2017.

\bibitem[DK21]{Dujella_Kazalicki_ANT}
Andrej {Dujella} and Matija {Kazalicki}.
\newblock {Diophantine $m$-tuples in finite fields and modular forms}.
\newblock {\em Research in Number Theory}, 7, 3, 2021.

\bibitem[DKMS17]{Dujella_Kazalicki_Mikic_Szikszai}
Andrej Dujella, Matija Kazalicki, Miljen Miki\'{c}, and M\'{a}rton Szikszai.
\newblock There are infinitely many rational {D}iophantine sextuples.
\newblock {\em Int. Math. Res. Not. IMRN}, (2):490--508, 2017.

\bibitem[DKP19]{Dujella_Kazalicki_Petricevic_JNT}
Andrej Dujella, Matija Kazalicki, and Vinko Petri\v{c}evi\'{c}.
\newblock Rational {D}iophantine sextuples with square denominators.
\newblock {\em J. Number Theory}, 205:340--346, 2019.

\bibitem[DKP20]{Dujella_Kazalicki_Petricevic}
Andrej Dujella, Matija Kazalicki, and Vinko Petri\v{c}evi\'{c}.
\newblock Rational {D}iophantine sextuples containing two regular quadruples
  and one regular quintuple.
\newblock {\em Acta Mathematica Spalatensia}, 1:19--27, 2020.

\bibitem[DP20a]{Dujella_Peral}
Andrej Dujella and Juan~Carlos Peral.
\newblock High rank elliptic curves induced by rational {D}iophantine triples.
\newblock {\em Glas. Mat. Ser. III}, 55(4):237--252, 2020.

\bibitem[DP20b]{Dujella_Petricevic}
Andrej Dujella and Vinko Petri\v{c}evi\'{c}.
\newblock Doubly regular {D}iophantine quadruples.
\newblock {\em Rev. R. Acad. Cienc. Exactas F\'{\i}s. Nat. Ser. A Mat. RACSAM},
  114(4):Paper No. 189, 8, 2020.

\bibitem[Duj04]{Dujella_Crelle}
Andrej Dujella.
\newblock There are only finitely many {D}iophantine quintuples.
\newblock {\em J. Reine Angew. Math.}, 566:183--214, 2004.

\bibitem[Duj09]{Dujella_Japan}
Andrej Dujella.
\newblock Rational {D}iophantine sextuples with mixed signs.
\newblock {\em Proc. Japan Acad. Ser. A Math. Sci.}, 85(4):27--30, 2009.

\bibitem[Duj16]{Dujella_Notices}
Andrej Dujella.
\newblock What is \dots a {D}iophantine {$m$}-tuple?
\newblock {\em Notices Amer. Math. Soc.}, 63(7):772--774, 2016.

\bibitem[Edw07]{Edwards_curve}
Harold~M. Edwards.
\newblock A normal form for elliptic curves.
\newblock {\em Bull. Amer. Math. Soc. (N.S.)}, 44(3):393--422, 2007.

\bibitem[Har77]{Hartshorne}
Robin Hartshorne.
\newblock {\em Algebraic geometry}.
\newblock Springer-Verlag, New York-Heidelberg, 1977.
\newblock Graduate Texts in Mathematics, No. 52.

\bibitem[HTZ19]{He_Togbe_Ziegler}
Bo~He, Alain Togb\'{e}, and Volker Ziegler.
\newblock There is no {D}iophantine quintuple.
\newblock {\em Trans. Amer. Math. Soc.}, 371(9):6665--6709, 2019.

\bibitem[IR90]{Ireland_Rosen}
Kenneth Ireland and Michael Rosen.
\newblock {\em A classical introduction to modern number theory}, volume~84 of
  {\em Graduate Texts in Mathematics}.
\newblock Springer-Verlag, New York, second edition, 1990.

\bibitem[KN20]{Kazalicki_Naskrecki_bias}
Matija Kazalicki and Bartosz Naskręcki.
\newblock Second moments and the bias conjecture for the family of cubic
  pencils.
\newblock arXiv:2012.11306, 2020.

\bibitem[KS08]{Kuwata_Shioda}
Masato Kuwata and Tetsuji Shioda.
\newblock Elliptic parameters and defining equations for elliptic fibrations on
  a {K}ummer surface.
\newblock In {\em Algebraic geometry in {E}ast {A}sia---{H}anoi 2005},
  volume~50 of {\em Adv. Stud. Pure Math.}, pages 177--215. Math. Soc. Japan,
  Tokyo, 2008.

\bibitem[Kub76]{Kubert}
Daniel~Sion Kubert.
\newblock Universal bounds on the torsion of elliptic curves.
\newblock {\em Proc. London Math. Soc. (3)}, 33(2):193--237, 1976.

\bibitem[Mic95]{Michel}
Philippe Michel.
\newblock Rang moyen de familles de courbes elliptiques et lois de
  {S}ato-{T}ate.
\newblock {\em Monatsh. Math.}, 120(2):127--136, 1995.

\bibitem[Mil02]{Miller_thesis}
Steven~Joel Miller.
\newblock {\em 1- and 2-level densities for families of elliptic curves:
  {E}vidence for the underlying group symmetries}.
\newblock ProQuest LLC, Ann Arbor, MI, 2002.
\newblock Thesis (Ph.D.)--Princeton University.

\bibitem[Mor84]{Morrison_K3}
David~R. Morrison.
\newblock On {K}3 surfaces with large {P}icard number.
\newblock {\em Invent. Math.}, 75(1):105--121, 1984.

\bibitem[Nas14]{Naskrecki_thesis}
Bartosz Naskręcki.
\newblock Ranks in families of elliptic curves and modular forms.
\newblock {\em Adam Mickiewicz University in Poznań}, 2014.
\newblock Ph.D. thesis.

\bibitem[{Nas}17]{Naskrecki_hypergeom}
Bartosz {Naskr\k{e}cki}.
\newblock On a certain hypergeometric motive of weight 2 and rank 3.
\newblock arXiv:1702.07738, 2017.

\bibitem[RS98]{Rosen_Silverman}
Michael Rosen and Joseph~H. Silverman.
\newblock On the rank of an elliptic surface.
\newblock {\em Invent. Math.}, 133(1):43--67, 1998.

\bibitem[Sch88]{Sch2}
Anthony~J. Scholl.
\newblock The {$l$}-adic representations attached to a certain noncongruence
  subgroup.
\newblock {\em J. Reine Angew. Math.}, 392:1--15, 1988.

\bibitem[Shi90]{Shioda_MW}
Tetsuji Shioda.
\newblock On the {M}ordell-{W}eil lattices.
\newblock {\em Comment. Math. Univ. St. Paul.}, 39(2):211--240, 1990.

\bibitem[Sil94]{Silverman_Advanced}
Joseph~H. Silverman.
\newblock {\em Advanced topics in the arithmetic of elliptic curves}, volume
  151 of {\em Graduate Texts in Mathematics}.
\newblock Springer-Verlag, New York, 1994.

\bibitem[Sil09]{Silverman_Arithmetic}
Joseph~H. Silverman.
\newblock {\em The arithmetic of elliptic curves}, volume 106 of {\em Graduate
  Texts in Mathematics}.
\newblock Springer, Dordrecht, second edition, 2009.

\bibitem[SS19]{Schutt_Shioda}
Matthias Sch\"{u}tt and Tetsuji Shioda.
\newblock {\em Mordell-{W}eil lattices}, volume~70 of {\em Ergebnisse der
  Mathematik und ihrer Grenzgebiete. 3. Folge. A Series of Modern Surveys in
  Mathematics [Results in Mathematics and Related Areas. 3rd Series. A Series
  of Modern Surveys in Mathematics]}.
\newblock Springer, Singapore, 2019.

\bibitem[Ver00]{Verrill_Rigid_Calabi_Yau}
H.~A. Verrill.
\newblock The {$L$}-series of certain rigid {C}alabi-{Y}au threefolds.
\newblock {\em J. Number Theory}, 81(2):310--334, 2000.

\bibitem[vGT06]{Geemen_Top}
Bert van Geemen and Jaap Top.
\newblock An isogeny of {K}3 surfaces.
\newblock {\em Bull. London Math. Soc.}, 38(2):209--223, 2006.

\end{thebibliography}
\end{document}